\providecommand{\U}[1]{\protect\rule{.1in}{.1in}}
\newtheorem{theorem}{Theorem}[section]
\theoremstyle{plain}
\newtheorem{corollary}{Corollary}[section]
\newtheorem{lemma}{Lemma}[section]
\newtheorem{remark}{Remark}[section]
\numberwithin{equation}{section}
\newtheorem{theorema}{Theorem}[section]
\begin{document}
\title[Scale-dependent Poincar\'{e} inequalities and stability of HUP]{Scale-Dependent Poincar\'{e} inequalities, log-Sobolev inequality and the
stability of the Heisenberg Uncertainty Principle on the hyperbolic space}
\author{Anh Xuan Do}
\address{Anh Xuan Do: Department of Mathematics, University of Connecticut, Storrs, CT
06269, USA}
\email{anh.do@uconn.edu}
\author{Debdip Ganguly}
\address{Debdip Ganguly: Theoretical Statistics and Mathematics Unit\\
Indian Statistical Institute, Delhi Centre\\
S.J. Sansanwal Marg, New Delhi, Delhi 110016, India}
\email{debdip@isid.ac.in}
\author{Nguyen Lam}
\address{Nguyen Lam: School of Science and the Environment, Grenfell Campus, Memorial
University of Newfoundland, Corner Brook, NL A2H5G4, Canada}
\email{nlam@mun.ca}
\author{Guozhen Lu}
\address{Guozhen Lu: Department of Mathematics, University of Connecticut, Storrs, CT
06269, USA}
\email{guozhen.lu@uconn.edu}

\begin{abstract}
We establish a general scale-dependent Poincar\'{e}-Hardy type identity
involving a vector field on the hyperbolic space. By choosing suitable
parameter, potential and vector field in this identity, we can recover, as
well as derive new versions of and substantially improve several Poincar\'{e}
type, Hardy type and Poincar\'{e}-Hardy type inequalities in the literature.
We also investigate weighted Poincar\'{e} inequalities on hyperbolic space,
where the weight functions depend on a scaling parameter. This leads to a new
family of scale-dependent Poincar\'{e} inequalities with Gaussian type measure
on the hyperbolic space which is of independent interest. As a result, we
derive both scale-dependent and scale-invariant $L^{2}$-stability results for
the Heisenberg uncertainty principle in this setting. Finally, we study the
logarithmic Sobolev inequality with Gaussian measure on the hyperbolic spaces,
that is still missing in the literature.

\end{abstract}
\subjclass[2010]{}
\keywords{}
\maketitle

\section{Introduction}

The purpose of this paper is four-fold. Firstly, we establish a general
scale-dependent Poincar\'{e}-Hardy type identity that can be used to deduce
several Hardy type inequalities, Poincar\'{e} type inequalities, and
Caffarelli-Kohn-Nirenberg type inequalities with exact remainders on the
hyperbolic space $\mathbb{H}^{N}$. Secondly, we provide some versions of the
scale-dependent Poincar\'{e} inequality with Gaussian type measures on
$\mathbb{H}^{N}$. Thirdly, by combining the Heisenberg uncertainty principle
identity on $\mathbb{H}^{N}$ and the scale-dependent Poincar\'{e} inequality
with Gaussian type measures on $\mathbb{H}^{N}$, we derive several types of
stability results for the Heisenberg uncertainty principle on $\mathbb{H}^{N}%
$. Finally, we investigate a version of the logarithmic Sobolev inequality
with Gaussian type measure, that seems to be missing in the literature still.

Throughout the paper, unless we emphasize to use a specific model of
$\mathbb{H}^{N}$, we will work with the Poincar\'{e} ball model of the
hyperbolic space $\mathbb{H}^{N}$. In particular, it is the Euclidean unit
ball $B^{N}:=\{x\in\mathbb{R}^{N}:|x|^{2}<1\}$ and equipped with the
Riemannian metric
\[
\mathrm{d}s^{2}=\left(  \frac{2}{1-|x|^{2}}\right)  ^{2}\,dx^{2}%
\]
which constitutes the ball model for the hyperbolic $N$-space, where $dx^{2}$
is the standard Euclidean metric and $|x|^{2}=\sum_{i=1}^{N}x_{i}^{2}$ is the
standard Euclidean length. By definition, the hyperbolic $N$-space is a
$N$-dimensional complete, non-compact Riemannian manifold having constant
sectional curvature equal to $-1$ and any two manifolds sharing the above
properties are isometric \cite{RAT}. In this article, all our computations
will involve only the ball model and will be denoted by $\mathbb{H}^{N}$. We
denote the inner product on the tangent space of $\mathbb{H}^{N}$ by
$\langle\cdot,\cdot\rangle_{\mathbb{H}^{N}}$ or even $\langle\cdot
,\cdot\rangle$ if there is no chance of confusion, and the volume element is
given by $dV_{\mathbb{H}}=\left(  \frac{2}{1-|x|^{2}}\right)  ^{N}dx,$ where
$dx$ denotes the Lebesgue measure on $\mathbb{R}^{N}$.

Let $\nabla_{\mathbb{H}}$ denote gradient vector field on $\mathbb{H}^{N}$. In
terms of local (global) coordinates, $\nabla_{\mathbb{H}}$ takes the form
$\nabla_{\mathbb{H}}=\left(  \frac{1-|x|^{2}}{2}\right)  ^{2}\nabla$. Here
$\nabla$ is the standard Euclidean gradient vector field. Now, for any smooth
vector field $X$ on $\mathbb{H}^{N}$, its divergence $\operatorname{div}X$ is
a smooth function on $\mathbb{H}^{N}$, and can be uniquely defined as follows:

\begin{theorema}
[The Divergence Theorem]For any smooth vector field $X$ on $\mathbb{H}^{N}$,
there exists a unique smooth function on $\mathbb{H}^{N}$, denoted by
$\operatorname{div}X$, such that the following identity holds%
\[
\int_{\mathbb{H}^{N}}\left(  \operatorname{div}X\right)  udV_{\mathbb{H}%
}=-\int_{\mathbb{H}^{N}}\left\langle X,\nabla_{\mathbb{H}}u\right\rangle
dV_{\mathbb{H}}.
\]

\end{theorema}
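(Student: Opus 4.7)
The plan is to (i) construct $\operatorname{div} X$ explicitly in the global ball coordinates on $\mathbb{H}^N$, (ii) establish the stated integration-by-parts identity via Stokes' theorem applied to a Leibniz-type expansion, and (iii) obtain uniqueness from the fundamental lemma of the calculus of variations. Because the Poincar\'{e} ball model supplies a single global chart, no gluing across charts is needed, which streamlines the existence step considerably.

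First I would define
$$\operatorname{div} X \;=\; \frac{1}{\sqrt{g}}\sum_{i=1}^{N}\partial_{x_i}\bigl(\sqrt{g}\,X^{i}\bigr),\qquad \sqrt{g}(x)=\left(\frac{2}{1-|x|^{2}}\right)^{N},$$
where $X=\sum_{i}X^{i}\partial_{x_i}$ is the coordinate expansion of $X$ in the ball model; smoothness of $\operatorname{div} X$ is immediate from smoothness of $X$. Equivalently, one may adopt the intrinsic formulation $\mathcal{L}_{X}\,dV_{\mathbb{H}}=(\operatorname{div} X)\,dV_{\mathbb{H}}$, the two definitions agreeing by a direct calculation. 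Next I would record the Leibniz rule
$$\operatorname{div}(uX)\;=\;u\,\operatorname{div} X+\langle X,\nabla_{\mathbb{H}}u\rangle_{\mathbb{H}^{N}},$$
which follows at once from the product rule together with the expression $\nabla_{\mathbb{H}}u=g^{ij}(\partial_{x_j}u)\,\partial_{x_i}$. For $u\in C_{c}^{\infty}(\mathbb{H}^{N})$ (i.e.\ $\operatorname{supp} u\subset\overline{B_{r}^{N}}$ for some $r<1$), Cartan's magic formula together with the closedness of the volume form gives $\operatorname{div}(uX)\,dV_{\mathbb{H}}=d(\iota_{uX}\,dV_{\mathbb{H}})$, so Stokes' theorem yields $\int_{\mathbb{H}^{N}}\operatorname{div}(uX)\,dV_{\mathbb{H}}=0$. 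Combining this with the Leibniz rule produces the desired identity for every $u\in C_{c}^{\infty}(\mathbb{H}^{N})$. More general $u$ for which both sides of the identity converge are then handled by a standard cut-off and dominated-convergence argument.

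For uniqueness, suppose that two smooth functions $f_{1},f_{2}$ both satisfy the characterizing identity against every $u\in C_{c}^{\infty}(\mathbb{H}^{N})$; then $\int_{\mathbb{H}^{N}}(f_{1}-f_{2})\,u\,dV_{\mathbb{H}}=0$ for all such $u$, and the fundamental lemma of the calculus of variations (applied locally in the ball chart, where $dV_{\mathbb{H}}$ has a strictly positive density against the Lebesgue measure) forces $f_{1}\equiv f_{2}$. I expect the main obstacle to be bookkeeping rather than any deep mathematical difficulty, namely pinning down the implicit class of admissible test functions $u$, which the statement leaves unspecified on this noncompact manifold; establishing the identity first on $C_{c}^{\infty}(\mathbb{H}^{N})$ and then extending by truncation circumvents this cleanly, and the actual content is just Stokes' theorem in disguise.
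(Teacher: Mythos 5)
Your proposal is correct and follows exactly the route the paper itself points to: the paper states this divergence theorem as background, defines $\operatorname{div}X=\frac{1}{\sqrt{\det g}}\partial_{x_i}(X^{i}\sqrt{\det g})$ in the global ball chart, and defers the (standard) justification to the cited references rather than proving it. Your coordinate definition, the Leibniz rule $\operatorname{div}(uX)=u\operatorname{div}X+\langle X,\nabla_{\mathbb{H}}u\rangle$, the Stokes/compact-support step, and the uniqueness via the fundamental lemma are all sound and constitute the standard argument the paper is implicitly invoking.
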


In the local coordinates, we can also define the divergence
$\operatorname{div}X$ by%
\[
\operatorname{div}X=\frac{1}{\sqrt{\det g}}\frac{\partial}{\partial x^{i}%
}\left(  X^{i}\sqrt{\det g}\right)
\]
where $g=\left(  g_{ij}\right)  $ is the matrix of the metric on
$\mathbb{H}^{N}$. See, for instance, \cite{Gre24, Jost}. With this, we can
define the Laplace-Beltrami operator $\Delta_{\mathbb{H}}$ by $\Delta
_{\mathbb{H}}=\operatorname{div}\circ\nabla$. In particular, in terms of local
(global) coordinates, $\Delta_{\mathbb{H}}$ takes the form:
\[
\Delta_{\mathbb{H}}=\left(  \frac{1-|x|^{2}}{2}\right)  ^{2}\Delta
+(N-2)\left(  \frac{1-|x|^{2}}{2}\right)  x\cdot\nabla,
\]
where $\nabla,\Delta$ are the standard Euclidean gradient vector field and
Laplace operator respectively, and `$\cdot$' denotes the standard inner
product in $\mathbb{R}^{N}$.

The hyperbolic distance between two points $x$ and $y$ in $\mathbb{H}^{N}$
will be denoted by $d(x,y).$ The hyperbolic distance between $x$ and the
origin can be computed explicitly
\[
\rho(x):=\,d(x,0)=\int_{0}^{|x|}\frac{2}{1-s^{2}}\,\mathrm{d}s\,=\,\log
\frac{1+|x|}{1-|x|},
\]
which implies $\left\vert x\right\vert =\tanh\frac{\rho}{2},$ and
\[
1-|x|^{2}=1-\tanh^{2}\left(  \frac{\rho(x)}{2}\right)  =\dfrac{4e^{\rho(x)}%
}{(1+e^{\rho(x)})^{2}}.
\]
Moreover, the hyperbolic distance between $x,y\in\mathbb{H}^{N}$ is given by
\[
d(x,y)=\cosh^{-1}\left(  1+\dfrac{2|x-y|^{2}}{(1-|x|^{2})(1-|y|^{2})}\right)
.
\]
As a result, a subset of $\mathbb{H}^{N}$ is a hyperbolic ball in
$\mathbb{H}^{N}$ if and only if it is a Euclidean ball in $\mathbb{R}^{N}$ and
contained in $\mathbb{H}^{N}$ possibly with a different centre and different
radius which can be explicitly computed from the formula of $d(x,y)$
\cite{RAT}. Geodesic balls in $\mathbb{H}^{N}$ of radius $r$ centred at
$x\in\mathbb{H}^{N}$ will be denoted by
\[
B_{r}(x):=\{y\in\mathbb{H}^{N}:d(x,y)<r\}.
\]

\medskip

We begin with the classical Hardy inequality in Euclidean space $\mathbb{R}%
^{N}$ which is one of the most used inequalities in analysis. It has been
studied intensively and plays crucial roles in many areas of analysis,
mathematical physics and partial differential equations. The reader is
referred to \cite{BV97, CZ13, DDT23, DLL24, DLL22, GM13, KMP07, KP03, LLZ19,
LLZ20, OK90, VZ00}, to name just a few. In particular, in an effort to
generalize the Hardy inequality to Riemannian manifolds, in \cite{C97}, Carron
demonstrated that: Let $(M,g)$ be a Riemannian manifold and let $\varrho(x)$
be a weight function satisfying the Eikonal equation $\left\vert \nabla
_{g}\varrho\right\vert =1$ and $\Delta_{g}\varrho\geq\frac{C}{\varrho}$, with
$C>0$ a positive constant, there holds
\begin{equation}
\int_{M}|\nabla_{g}u|^{2}dv_{g}\geq\left(  \dfrac{C-1}{2}\right)  ^{2}\int
_{M}\dfrac{\left\vert u\right\vert ^{2}}{\varrho^{2}}dv_{g}
\label{HardyinRiem}%
\end{equation}
for all $u\in C_{0}^{\infty}(M\setminus\varrho^{-1}\{0\})$. In a special case
when $M$ is a Cartan-Hadamard manifold of dimension $N$ which is complete,
simply-connected, and has non-positive sectional curvature everywhere, the
geodesic distance function $d(x,x_{0})$, with $x_{0}\in M$ serves as a
suitable candidate for the weight function $\varrho$. In this case, the
inequality \eqref{HardyinRiem} holds with the same best constant as in
$\mathbb{R}^{N}$. In particular, on the hyperbolic space $\mathbb{H}^{N}$,
which is one of the most important examples of Cartan-Hadamard manifolds, it
is known that
\begin{equation}
\int_{\mathbb{H}^{N}}|\nabla_{\mathbb{H}}u|^{2}dV_{\mathbb{H}}\geq\left(
\dfrac{N-2}{2}\right)  ^{2}\int_{\mathbb{H}^{N}}\dfrac{\left\vert u\right\vert
^{2}}{r^{2}}dV_{\mathbb{H}}, \label{HardyinHyper}%
\end{equation}
for all $u\in C_{0}^{\infty}(\mathbb{H}^{N}\setminus\{x_{0}\})$, with
$r:=d(x,x_{0})$ and $x_{0}\in\mathbb{H}^{N}$ is a fixed pole. For insights
into how curvature influences the improvements of the Hardy inequality, the
reader may refer to \cite{BGGP20, FLL22, KO09, KO13, RSY24, YSK14} and the
references therein.

On the hyperbolic space $\mathbb{H}^{N}$, the Poincar\'{e} inequality is
another important inequality that is closely related to the Hardy inequality.
Indeed, the hyperbolic space being a Cartan-Hadamard manifold with constant
negative sectional curvature is well-known to admit a Poincar\'{e} or $L^{2}$-
gap inequality, namely,
\begin{equation}
\int_{\mathbb{H}^{N}}\left\vert \nabla_{\mathbb{H}}u\right\vert ^{2}%
\;dV_{\mathbb{H}}\geq\dfrac{(N-1)^{2}}{4}\int_{\mathbb{H}^{N}}\left\vert
u\right\vert ^{2}\;dV_{\mathbb{H}}\quad\forall u\in C_{c}^{\infty}%
(\mathbb{H}^{N}), \label{classicPoin}%
\end{equation}
where the constant $\frac{(N-1)^{2}}{4}$ is sharp and never attained. In fact,
this inequality holds for any $L^{p}$-norm with $p>1$, and the corresponding
constant is $\left(  \frac{N-1}{p}\right)  ^{p}$, see \cite{AH19}. Focusing on
the issue of non-attainability, numerous mathematicians have worked on
refining inequality \eqref{classicPoin}. One such improvement stems from the
observation that the operator
\[
-\Delta_{p,\mathbb{H}^{N}}-\left(  \frac{N-1}{p}\right)  ^{p}=\text{div}%
\left(  \left\vert \nabla_{\mathbb{H}}\cdot\right\vert ^{p-2}\nabla
_{\mathbb{H}}\right)  -\left(  \frac{N-1}{p}\right)  ^{p}%
\]
is sub-critical on $\mathbb{H}^{N}$. This motivated us to add non-negative
terms to the right-hand side of \eqref{classicPoin} (see \cite{BAGG17, MS08,
H20}). There are also efforts to study the interpolation inequalities between
the Hardy inequality, the Poincar\'{e} inequality, and other geometric
inequalities, and their improvements on the hyperbolic space. See, for
instance, \cite{BGG17, DLP24, FLL22, KO13, LLY18, LY22a, LY19, LY22}.

\medskip

The first main goal of this paper is to set up a general Poincar\'{e}-Hardy
type identity involving a vector field on the hyperbolic space. More exactly,
we will prove that

\begin{theorem}
\label{Thm 1.1} Let $N\geq1,$ $p>1,$ $\lambda>0,$ $A\in C^{1}(\mathbb{H}%
^{N}\setminus\left\{  0\right\}  )$ and $X$ be a vector field on
$\mathbb{H}^{N}$. Then for all $u\in C_{0}^{1}(\mathbb{H}^{N}\setminus\left\{
0\right\}  )$, we have
\begin{align}
&  \lambda^{p}\int_{\mathbb{H}^{N}}A|\nabla_{\mathbb{H}}u|^{p}dV_{\mathbb{H}%
}+\dfrac{p-1}{\lambda^{p/(p-1)}}\int_{\mathbb{H}^{N}}A|uX|^{p}dV_{\mathbb{H}%
}\nonumber\\
&  =-\int_{\mathbb{H}^{N}}\operatorname{div}\left(  A|X|^{p-2}X\right)
|u|^{p}dV_{\mathbb{H}}+\int_{\mathbb{H}^{N}}A\mathcal{R}_{p}\left(  \dfrac
{uX}{\lambda^{1/(p-1)}},\lambda\nabla_{\mathbb{H}}u\right)  dV_{\mathbb{H}}.
\label{intrognridt}%
\end{align}
Here $\mathcal{R}_{p}\left(  X,Y\right)  :=|Y|^{p}+(p-1)|X|^{p}-p|X|^{p-2}%
\left\langle X,Y\right\rangle $.
\end{theorem}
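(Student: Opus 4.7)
The plan is to show that the claimed identity is essentially the pointwise expansion of the remainder $\mathcal{R}_p$ integrated against $A$, combined with a single integration by parts (the divergence theorem stated earlier) applied to the vector field $A|X|^{p-2}X$ tested against $|u|^{p}$.

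First I would substitute $X':=uX/\lambda^{1/(p-1)}$ and $Y':=\lambda \nabla_{\mathbb{H}} u$ directly into the definition $\mathcal{R}_p(X',Y')=|Y'|^{p}+(p-1)|X'|^{p}-p|X'|^{p-2}\langle X',Y'\rangle$. A routine homogeneity computation gives $|Y'|^{p}=\lambda^{p}|\nabla_{\mathbb{H}} u|^{p}$ and $(p-1)|X'|^{p}=(p-1)\lambda^{-p/(p-1)}|uX|^{p}$, so these two terms reproduce the left-hand side of \eqref{intrognridt} once integrated against $A\,dV_{\mathbb{H}}$. The only nontrivial piece is the cross term, and the bookkeeping of the powers of $\lambda$ is arranged precisely so that they cancel: one checks that
\[
p\,|X'|^{p-2}\langle X',Y'\rangle \;=\; p\, u\,|u|^{p-2}\,|X|^{p-2}\,\langle X,\nabla_{\mathbb{H}} u\rangle,
\]
with no leftover $\lambda$. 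This is the heart of why the scaling factor $\lambda^{1/(p-1)}$ appears inside $\mathcal{R}_p$ in the way it does.

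Next I would rewrite the cross term in divergence form. Since $\nabla_{\mathbb{H}} |u|^{p}=p\,u|u|^{p-2}\nabla_{\mathbb{H}} u$ pointwise (on the open set where $u\neq 0$; the identity extends by a standard approximation/Lipschitz argument to all of $\mathbb{H}^{N}\setminus\{0\}$ for $u\in C_{0}^{1}(\mathbb{H}^{N}\setminus\{0\})$), the cross term becomes
\[
p\int_{\mathbb{H}^{N}} A\,|X'|^{p-2}\langle X',Y'\rangle\,dV_{\mathbb{H}}
= \int_{\mathbb{H}^{N}} A\,|X|^{p-2}\,\bigl\langle X,\nabla_{\mathbb{H}} |u|^{p}\bigr\rangle\,dV_{\mathbb{H}}.
\]
Now I would apply the divergence theorem recorded above to the vector field $A|X|^{p-2}X$ with the test function $|u|^{p}$ (compactly supported in $\mathbb{H}^{N}\setminus\{0\}$, so no boundary term at the origin is an issue even if $A$ or $X$ are singular there), obtaining
\[
\int_{\mathbb{H}^{N}} A\,|X|^{p-2}\bigl\langle X,\nabla_{\mathbb{H}} |u|^{p}\bigr\rangle\,dV_{\mathbb{H}}
= -\int_{\mathbb{H}^{N}} \operatorname{div}\!\bigl(A|X|^{p-2}X\bigr)\,|u|^{p}\,dV_{\mathbb{H}}.
\]

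Finally I would rearrange. Writing out
\[
\int_{\mathbb{H}^N} A\,\mathcal{R}_p(X',Y')\,dV_{\mathbb{H}}
=\lambda^{p}\!\!\int_{\mathbb{H}^N}\!\! A|\nabla_{\mathbb{H}} u|^{p}dV_{\mathbb{H}}
+\tfrac{p-1}{\lambda^{p/(p-1)}}\!\!\int_{\mathbb{H}^N}\!\! A|uX|^{p}dV_{\mathbb{H}}
+\!\!\int_{\mathbb{H}^N}\!\!\operatorname{div}(A|X|^{p-2}X)|u|^{p}dV_{\mathbb{H}},
\]
and transposing the divergence term yields \eqref{intrognridt} exactly. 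The only conceptual obstacle is justifying the integration by parts when $A$, $X$, or $\operatorname{div}(A|X|^{p-2}X)$ may be singular at the origin; this is handled precisely by the hypothesis $u\in C_{0}^{1}(\mathbb{H}^{N}\setminus\{0\})$, so that $|u|^{p}$ and its gradient vanish in a neighborhood of $0$ and the divergence theorem applies on the complement. The rest is purely algebraic, so no deeper analytic input is needed.
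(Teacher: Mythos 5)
Your proposal is correct and is essentially the paper's own proof read in the opposite direction: the paper starts from the divergence theorem applied to $A|X|^{p-2}X$ against $|u|^{p}$ and then recognizes the resulting cross term as the missing piece of $\mathcal{R}_{p}\bigl(uX/\lambda^{1/(p-1)},\lambda\nabla_{\mathbb{H}}u\bigr)$, whereas you expand $\mathcal{R}_{p}$ first and integrate by parts at the end — the same single integration by parts and the same cancellation of the powers of $\lambda$ in the cross term. No substantive difference.
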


$L^{p}$ identities such as \eqref{intrognridt} without the scaling factor
(that is $\lambda=1$) when $p=2$ have been established earlier in different
settings including the Euclidean spaces, hyperbolic spaces, Carnot groups and
Riemannian manifolds \cite{FLL22, FLLM23, FS08, LLZ19, LLZ20}, to name just a
few. For arbitrary $p\geq1$ without the scaling factor, it appeared initially
in \cite{DLL22} and subsequently in \cite{FLL25} in the Euclidean spaces and
then more general setting for vector fields in \cite{HY22}. Also, Theorem
\ref{Thm 1.1} with the scaling factor $\lambda$ has been set up in \cite{CFNL,
AJLL23}, for instance, in the Euclidean setting. We note that as showed in
\cite{CFNL, AJLL23}, the parameter $\lambda$ plays an important role in
studying the sharp constants and optimizers of the Caffarelli-Kohn-Nirenberg
type inequalities, which are more optimal than the Hardy type inequalities. In
the Euclidean case, to extract the scaling parameters $\lambda$ in
\eqref{intrognridt}, an important tool in the approach in \cite{AJLL23} is the
scaling/dilation arguments. Unfortunately, given that the conformal group of
$\mathbb{H}^{N}$ coincides with its isometry group, the main challenge in
proving \eqref{intrognridt} in the hyperbolic space lies in the absence of
scaling arguments. Nevertheless, our method here is to construct an identity
that incorporates the scaling parameters directly instead of using any
intermediate identity, and use it to establish \eqref{intrognridt}.

Our motivation of proving \eqref{intrognridt} is to find a method to provide
simple and straightforward proofs to the Poincar\'{e} inequality and Hardy
inequality on $\mathbb{H}^{N}$. Indeed, by choosing suitable parameter
$\lambda$, potential $A$ and vector field $X$ in \eqref{intrognridt}, we can
recover several Poincar\'{e} type, Hardy type and Poincar\'{e}-Hardy type
inequalities in the literature. Interestingly, this identity
\eqref{intrognridt} has broader applications than initially anticipated. In
this paper, we choose to present three consequences of Theorem \ref{Thm 1.1}.
Firstly, \eqref{intrognridt} provides an alternative method for proving
weighted Hardy-type inequalities on $\mathbb{H}^{N}$ (see Subsection
\ref{subsct4.1}). Moreover, if \eqref{intrognridt} is considered as an
identity in terms of $\lambda$, it yields Hardy-type identities when $\lambda$
is $1$, while optimizing $\lambda$ provides Caffarelli-Kohn-Nirenberg
identities. More precisely, we can derive the following
Caffarelli-Kohn-Nirenberg identity and inequality with Bessel pair:

\begin{theorem}
\label{M1}Let $0<R\leq\infty$, $V\geq0$ and $W$ are $C^{1}$-functions on
$(0,R)$. Assume that $(r^{N-1}V,r^{N-1}W)$ be a Bessel pair on $(0,R)$ with
its corresponding solution $\varphi>0$. Then, we have for $u\in C_{0}^{\infty
}(B_{R}\setminus\left\{  0\right\}  ):$
\begin{align*}
&  \left(  \int_{B_{R}}V(\rho(x))|\nabla_{\mathbb{H}}u|^{2}dV_{\mathbb{H}%
}\right)  ^{1/2}\left(  \int_{B_{R}}V(\rho(x))\dfrac{\left(  \varphi^{\prime
}(\rho(x))\right)  ^{2}}{\varphi^{2}(\rho(x))}\left\vert u\right\vert
^{2}dV_{\mathbb{H}}\right)  ^{1/2}\\
&  =\dfrac{1}{2}\int_{B_{R}}\left(  W(\rho(x))+V(\rho(x))\dfrac{\left(
\varphi^{\prime}(\rho(x))\right)  ^{2}}{\varphi^{2}(\rho(x))}\right)
\left\vert u\right\vert ^{2}dV_{\mathbb{H}}\\
&  -\dfrac{N-1}{2}\int_{B_{R}}V(\rho(x))\dfrac{\varphi^{\prime}(\rho
(x))}{\varphi(\rho(x))}\left(  \dfrac{\rho(x)\cosh\rho(x)-\sinh\rho(x)}%
{\rho(x)\sinh\rho(x)}\right)  |u|^{2}dV_{\mathbb{H}}\\
&  +\dfrac{1}{2}\int_{B_{R}}V(\rho(x))\left\vert \dfrac{1}{\lambda}%
u\dfrac{\varphi^{\prime}(\rho(x))}{\varphi(\rho(x))}\nabla_{\mathbb{H}}%
(\rho(x))-\lambda\nabla_{\mathbb{H}}u\right\vert ^{2}dV_{\mathbb{H}}.
\end{align*}
In particular, if $\frac{\varphi^{\prime}}{\varphi}$ is nonpositive, we deduce
a family of Caffarelli-Kohn-Nirenberg inequalities, i.e.
\begin{align*}
&  \left(  \int_{B_{R}}V(\rho(x))|\nabla_{\mathbb{H}}u|^{2}dV_{\mathbb{H}%
}\right)  ^{1/2}\left(  \int_{B_{R}}V(\rho(x))\dfrac{\left(  \varphi^{\prime
}(\rho(x))\right)  ^{2}}{\varphi^{2}(\rho(x))}\left\vert u\right\vert
^{2}dV_{\mathbb{H}}\right)  ^{1/2}\\
&  \geq\dfrac{1}{2}\int_{B_{R}}\left(  W(\rho(x))+V(\rho(x))\dfrac{\left(
\varphi^{\prime}(\rho(x))\right)  ^{2}}{\varphi^{2}(\rho(x))}\right)
\left\vert u\right\vert ^{2}dV_{\mathbb{H}}.
\end{align*}

\end{theorem}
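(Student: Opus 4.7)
The plan is to specialize Theorem \ref{Thm 1.1} to $p=2$ with carefully chosen $A$ and $X$, compute the divergence using the Bessel pair ODE, and then optimize the scaling parameter $\lambda$ via AM--GM. Take $A(x)=V(\rho(x))$ and $X(x)=\dfrac{\varphi'(\rho(x))}{\varphi(\rho(x))}\nabla_{\mathbb{H}}\rho(x)$. Since $|\nabla_{\mathbb{H}}\rho|\equiv 1$ on $\mathbb{H}^{N}\setminus\{0\}$, we have $|X|^{2}=(\varphi'/\varphi)^{2}$, and for $p=2$ the Bregman-type remainder reduces to $\mathcal{R}_{2}(Y_{1},Y_{2})=|Y_{2}-Y_{1}|^{2}$. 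Substituting these choices into \eqref{intrognridt} produces, for every $\lambda>0$,
\begin{align*}
&\lambda^{2}\int_{B_{R}}V(\rho)|\nabla_{\mathbb{H}}u|^{2}dV_{\mathbb{H}}+\frac{1}{\lambda^{2}}\int_{B_{R}}V(\rho)\frac{(\varphi')^{2}}{\varphi^{2}}|u|^{2}dV_{\mathbb{H}}\\
&\quad =-\int_{B_{R}}\operatorname{div}(VX)|u|^{2}dV_{\mathbb{H}}+\int_{B_{R}}V(\rho)\left|\lambda\nabla_{\mathbb{H}}u-\frac{uX}{\lambda}\right|^{2}dV_{\mathbb{H}}.
\end{align*}

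Next, compute $\operatorname{div}(VX)$. Writing $VX=f(\rho)\nabla_{\mathbb{H}}\rho$ with $f(\rho)=V(\rho)\varphi'(\rho)/\varphi(\rho)$, the chain rule gives $\operatorname{div}(VX)=f'(\rho)+f(\rho)\Delta_{\mathbb{H}}\rho$, where $\Delta_{\mathbb{H}}\rho=(N-1)\coth\rho$. Using the Bessel pair ODE $(r^{N-1}V\varphi')'+r^{N-1}W\varphi=0$, which rearranges to $V'\varphi'/\varphi+V\varphi''/\varphi=-\tfrac{N-1}{\rho}V\varphi'/\varphi-W$, a short calculation yields
\[
-\operatorname{div}(VX)=W(\rho)+V(\rho)\frac{(\varphi')^{2}}{\varphi^{2}}-(N-1)V(\rho)\frac{\varphi'(\rho)}{\varphi(\rho)}\cdot\frac{\rho\cosh\rho-\sinh\rho}{\rho\sinh\rho}.
\]

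Finally, by AM--GM, $\lambda^{2}a+\lambda^{-2}b\geq 2\sqrt{ab}$ with equality at $\lambda^{2}=\sqrt{b/a}$. Choosing this optimal $\lambda$ turns the LHS of the displayed identity into $2\bigl(\int V|\nabla_{\mathbb{H}}u|^{2}dV_{\mathbb{H}}\bigr)^{1/2}\bigl(\int V(\varphi')^{2}|u|^{2}/\varphi^{2}\,dV_{\mathbb{H}}\bigr)^{1/2}$; dividing by $2$ and substituting the divergence computation gives the asserted identity. For the inequality, note that $\tfrac{d}{d\rho}(\rho\cosh\rho-\sinh\rho)=\rho\sinh\rho>0$ and $\rho\cosh\rho-\sinh\rho$ vanishes at $\rho=0$, so the factor $(\rho\cosh\rho-\sinh\rho)/(\rho\sinh\rho)$ is positive on $(0,\infty)$; combined with the hypothesis $\varphi'/\varphi\leq 0$, the middle term on the RHS is nonnegative, while the last term is manifestly nonnegative, so both may be dropped to yield the Caffarelli--Kohn--Nirenberg inequality.

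The principal technical obstacle is the divergence calculation: the $(N-1)/\rho$ contribution from the Bessel pair ODE must combine with the $(N-1)\coth\rho$ from $\Delta_{\mathbb{H}}\rho$ to produce exactly the hyperbolic correction $\coth\rho-1/\rho$, which is what appears in the theorem. This correction is geometry-specific and vanishes identically in the Euclidean setting (where $\Delta|x|=(N-1)/|x|$), and its presence here is precisely what distinguishes the hyperbolic identity from its flat counterpart.
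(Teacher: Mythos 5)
Your proposal is correct and follows essentially the same route as the paper: the authors likewise apply Theorem \ref{Thm 1.1} with $p=2$, $A=V(\rho(x))$, $X=\frac{\varphi'(\rho(x))}{\varphi(\rho(x))}\nabla_{\mathbb{H}}\rho(x)$, use the Bessel-pair ODE in exactly this divergence computation (as in their Theorem \ref{Cor 1.1}), and then evaluate at the optimal $\lambda=\bigl(\int_{B_R}V\frac{(\varphi')^{2}}{\varphi^{2}}|u|^{2}dV_{\mathbb{H}}\,/\,\int_{B_R}V|\nabla_{\mathbb{H}}u|^{2}dV_{\mathbb{H}}\bigr)^{1/4}$ to turn the two-parameter family into the stated product form. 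Your sign analysis for the inequality also matches theirs.
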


In certain cases, deriving Hardy-type and Caffarelli-Kohn-Nirenberg
inequalities from these identities is relatively straightforward. Thus, Hardy
inequalities can be viewed as non-optimal scale invariant
Caffarelli-Kohn-Nirenberg inequalities in this context (see Subsection
\ref{subsct4.2}).

Finally, as a special case of the Caffarelli-Kohn-Nirenberg identities
discussed in Subsection \ref{subsct4.2}, when $a=-1$ and $b=0$, we obtain the
following improved Heisenberg uncertainty principle on $\mathbb{H}^{N}$:%
\begin{align*}
&  \left(  \int_{\mathbb{H}^{N}}|\nabla_{\mathbb{H}}u|^{2}dV_{\mathbb{H}%
}\right)  ^{1/2}\left(  \int_{\mathbb{H}^{N}}\rho^{2}(x)|u|^{2}dV_{\mathbb{H}%
}\right)  ^{1/2}\\
&  \geq\dfrac{1}{2}\int_{\mathbb{H}^{N}}\left(  N+(N-1)\dfrac{\rho(x)\cosh
\rho(x)-\sinh\rho(x)}{\sinh\rho(x)}\right)  \;|u|^{2}dV_{\mathbb{H}}.
\end{align*}
See also \cite{K18}. Moreover, as detailed in Subsection \ref{subsct4.3}, we
will establish the above improved Heisenberg uncertainty principle with exact
remainder and therefore obtain the following set of its optimizers:%
\[
E:=\left\{  ce^{-\alpha\rho^{2}(x)}:c\in\mathbb{R},\alpha>0\right\}  .
\]
To be exact, we will show that

\begin{theorem}
\label{M2}For $u\in C_{0}^{\infty}(\mathbb{H}^{N})\setminus\left\{  0\right\}
:$%
\begin{align*}
&  \left(  \int_{\mathbb{H}^{N}}|\nabla_{\mathbb{H}}u|^{2}dV_{\mathbb{H}%
}\right)  ^{1/2}\left(  \int_{\mathbb{H}^{N}}\rho^{2}(x)|u|^{2}dV_{\mathbb{H}%
}\right)  ^{1/2}\\
&  -\dfrac{1}{2}\int_{\mathbb{H}^{N}}\left(  N+(N-1)\dfrac{\rho(x)\cosh
\rho(x)-\sinh\rho(x)}{\sinh\rho(x)}\right)  \;|u|^{2}dV_{\mathbb{H}}\\
&  =\dfrac{\lambda^{2}}{2}\int_{\mathbb{H}^{N}}e^{-\frac{\rho^{2}(x)}%
{\lambda^{2}}}\left\vert \nabla_{\mathbb{H}}\left(  ue^{\frac{\rho^{2}%
(x)}{2\lambda^{2}}}\right)  \right\vert ^{2}dV_{\mathbb{H}}%
\end{align*}
with $\lambda=\left(  \dfrac{\int_{\mathbb{H}^{N}}\rho^{2}(x)|u|^{2}%
dV_{\mathbb{H}}}{\int_{\mathbb{H}^{N}}|\nabla_{\mathbb{H}}u|^{2}%
dV_{\mathbb{H}}}\right)  ^{1/4}.$
\end{theorem}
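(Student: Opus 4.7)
My plan is to deduce Theorem \ref{M2} directly from Theorem \ref{Thm 1.1} by specializing to $p=2$, $A\equiv 1$, $X=-\rho(x)\nabla_{\mathbb{H}}\rho$, and then choosing $\lambda$ so that the left-hand side of \eqref{intrognridt} is saturated by AM--GM. The sign in $X$ is picked deliberately so that the divergence contribution lands on the correct side of the final identity, and the optimal $\lambda$ turns out to be exactly the one announced in the theorem.

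The first step is to compute the pieces of \eqref{intrognridt} under this choice. Using $|\nabla_{\mathbb{H}}\rho|\equiv 1$ and the classical formula $\Delta_{\mathbb{H}}\rho=(N-1)\coth\rho$, the product rule gives
\[
\operatorname{div}(\rho\nabla_{\mathbb{H}}\rho)=|\nabla_{\mathbb{H}}\rho|^{2}+\rho\,\Delta_{\mathbb{H}}\rho=1+(N-1)\rho\coth\rho=N+(N-1)\,\frac{\rho\cosh\rho-\sinh\rho}{\sinh\rho},
\]
which is precisely the kernel appearing in Theorem \ref{M2}. Since $|X|^{2}=\rho^{2}$, $\mathcal{R}_{2}(Y,Z)=|Z-Y|^{2}$, and $\operatorname{div}(AX)=-\operatorname{div}(\rho\nabla_{\mathbb{H}}\rho)$, identity \eqref{intrognridt} becomes
\[
\lambda^{2}\!\int_{\mathbb{H}^{N}}|\nabla_{\mathbb{H}}u|^{2}dV_{\mathbb{H}}+\frac{1}{\lambda^{2}}\!\int_{\mathbb{H}^{N}}\rho^{2}u^{2}dV_{\mathbb{H}}=\int_{\mathbb{H}^{N}}\!\operatorname{div}(\rho\nabla_{\mathbb{H}}\rho)\,u^{2}dV_{\mathbb{H}}+\int_{\mathbb{H}^{N}}\Bigl|\lambda\nabla_{\mathbb{H}}u+\frac{u\rho}{\lambda}\nabla_{\mathbb{H}}\rho\Bigr|^{2}dV_{\mathbb{H}}.
\]

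Next I would divide by $2$ and apply AM--GM to the left-hand side: $\frac{\lambda^{2}}{2}\!\int|\nabla_{\mathbb{H}}u|^{2}+\frac{1}{2\lambda^{2}}\!\int\rho^{2}u^{2}\geq\bigl(\!\int|\nabla_{\mathbb{H}}u|^{2}\bigr)^{1/2}\bigl(\!\int\rho^{2}u^{2}\bigr)^{1/2}$, with equality exactly when $\lambda^{4}=\int\rho^{2}u^{2}\big/\int|\nabla_{\mathbb{H}}u|^{2}$, i.e.\ at the value of $\lambda$ given in Theorem \ref{M2}. Inserting this choice of $\lambda$ and transferring the divergence term produces
\[
\Bigl(\!\int|\nabla_{\mathbb{H}}u|^{2}dV_{\mathbb{H}}\Bigr)^{1/2}\!\Bigl(\!\int\rho^{2}u^{2}dV_{\mathbb{H}}\Bigr)^{1/2}-\frac{1}{2}\!\int\!\operatorname{div}(\rho\nabla_{\mathbb{H}}\rho)\,u^{2}dV_{\mathbb{H}}=\frac{1}{2}\!\int\Bigl|\lambda\nabla_{\mathbb{H}}u+\frac{u\rho}{\lambda}\nabla_{\mathbb{H}}\rho\Bigr|^{2}dV_{\mathbb{H}}.
\]

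Finally, I would recast the remainder in Gaussian form. The product rule yields $\nabla_{\mathbb{H}}(ue^{\rho^{2}/(2\lambda^{2})})=e^{\rho^{2}/(2\lambda^{2})}(\nabla_{\mathbb{H}}u+\frac{u\rho}{\lambda^{2}}\nabla_{\mathbb{H}}\rho)$, so $e^{-\rho^{2}/\lambda^{2}}|\nabla_{\mathbb{H}}(ue^{\rho^{2}/(2\lambda^{2})})|^{2}=|\nabla_{\mathbb{H}}u+\frac{u\rho}{\lambda^{2}}\nabla_{\mathbb{H}}\rho|^{2}$; multiplying by $\lambda^{2}/2$ reproduces the right-hand side obtained above. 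No serious obstacle arises once Theorem \ref{Thm 1.1} is available; the only delicate point is the sign choice $X=-\rho\nabla_{\mathbb{H}}\rho$ (so that $-\operatorname{div}(AX)$ supplies $+\operatorname{div}(\rho\nabla_{\mathbb{H}}\rho)$ with the sign needed to match the $-\tfrac{1}{2}\!\int(\cdots)|u|^{2}$ term of the target) and the observation that AM--GM is saturated at precisely the $\lambda$ advertised in the theorem.
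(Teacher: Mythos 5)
Your proposal is correct and follows essentially the same route as the paper: specialize Theorem \ref{Thm 1.1} to $p=2$, $A\equiv 1$, $X=-\rho\nabla_{\mathbb{H}}\rho$, compute $\operatorname{div}(\rho\nabla_{\mathbb{H}}\rho)=1+(N-1)\rho\coth\rho$, rewrite $\mathcal{R}_{2}$ as the Gaussian-weighted gradient of $ue^{\rho^{2}/(2\lambda^{2})}$, and substitute the value of $\lambda$ that equalizes the two terms on the left so that the arithmetic--geometric mean step is an equality. The paper phrases that last step as a direct substitution of the optimal $\lambda$ rather than as AM--GM with its equality case, but the computation is identical.
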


The above improved Heisenberg uncertainty principle and its manifold of
optimizers have inspired us to pose the following stability question for the
Heisenberg uncertainty principle on $\mathbb{H}^{N}$:

\noindent\textbf{{Question 1:}} \textit{Does there exist }$d\left(
u,E\right)  $\textit{, a suitable distance function from }$u$\textit{ to }%
$E$\textit{, such that }%
\begin{align}
\text{Heisenberg deficit}  &  :=\left(  \int_{\mathbb{H}^{N}}|\nabla
_{\mathbb{H}}u|^{2}dV_{\mathbb{H}}\right)  ^{1/2}\left(  \int_{\mathbb{H}^{N}%
}\rho^{2}(x)|u|^{2}dV_{\mathbb{H}}\right)  ^{1/2}\nonumber\\
&  -\dfrac{1}{2}\int_{\mathbb{H}^{N}}\left(  N+(N-1)\dfrac{\rho(x)\cosh
\rho(x)-\sinh\rho(x)}{\sinh\rho(x)}\right)  \;|u|^{2}dV_{\mathbb{H}%
}\nonumber\\
&  \gtrsim\;d^{2}\left(  u,E\right)  , \label{introdeficit}%
\end{align}
\textit{for }$u\in W^{1,2}(H^{N})\cap\{u:\int_{\mathbb{H}^{N}}\rho
^{2}(x)|u|^{2}dV_{\mathbb{H}}<\infty\}$\textit{?}

Here $E$ is the set of all optimizers of the Heisenberg uncertainty principle.
It is noteworthy that the above Heisenberg deficit represents an improvement
over the standard Heisenberg deficit defined by
\[
\left(  \int_{\mathbb{H}^{N}}|\nabla_{\mathbb{H}}u|^{2}dV_{\mathbb{H}}\right)
^{1/2}\left(  \int_{\mathbb{H}^{N}}\rho^{2}(x)|u|^{2}dV_{\mathbb{H}}\right)
^{1/2}-\dfrac{N}{2}\int_{\mathbb{H}^{N}}|u|^{2}dV_{\mathbb{H}},
\]
due to the positivity of the term $(\rho(x)\;\cosh\rho(x)\;-\;\sinh\rho(x))$.

Question 1 naturally arises from the work in \cite{CFNL, AJLL23}, where the
authors addressed the problem in Euclidean space $\mathbb{R}^{N}$. They
established both the optimal stability constant and its attainability,
achieving significant results. More clearly, in $\mathbb{R}^{N}$, with the
Heisenberg deficit defined as
\[
\delta_{2,\mathbb{R}^{N}}:=\left(  \int_{\mathbb{R}^{N}}|\nabla u|^{2}%
dx\right)  \left(  \int_{\mathbb{R}^{N}}|x|^{2}|u|^{2}dx\right)  -\dfrac
{N^{2}}{4}\left(  \int_{\mathbb{R}^{N}}|u|^{2}dx\right)  ^{2},
\]
by applying the concentration-compactness arguments, McCurdy and Venkatraman
proved in \cite{MV21} that there exist universal constants $C_{1}>0$ and
$C_{2}(N)>0$ such that
\[
\delta_{2,\mathbb{R}^{N}}(u)\geq C_{1}\left(  \int_{\mathbb{R}^{N}}%
|u|^{2}dx\right)  d_{1}^{2}(u,G)+C_{2}(N)d_{1}^{4}(u,G),
\]
for all $u\in\{u\in W^{1,2}(\mathbb{R}^{N}):\Vert xu\Vert_{2}<\infty\}$. Here,
$G:=\{ce^{-\alpha|x|^{2}}:c\in\mathbb{R},\alpha>0\}$ is the set of the
Gaussian functions on $\mathbb{R}^{N}$ and $d_{1}(u,G):=\inf\left\{  \left(
\int_{\mathbb{R}^{N}}\left\vert u-ce^{-\alpha|x|^{2}}\right\vert
^{2}dx\right)  ^{1/2}:c\in\mathbb{R},\alpha>0\right\}  $ is the $L^{2}$
distance to the set of the optimizers. Subsequently, in \cite{F21}, Fathi
provided a concise and constructive proof of the estimate, explicitly
identifying the constants $C_{1}$ and $C_{2}(N)$. Following this, Cazacu,
Flynn, Lam and Lu obtained in \cite{CFNL} a sharp version of the estimate with
optimal stability constants and demonstrated that these constants can be
achieved by nontrivial functions. In particular, the following family of
Poincar\'{e} inequality with Gaussian type measures plays the key role in the
approach in \cite{CFNL}:

\begin{theorema}
[\cite{CFNL}, Lemma~3.2]Let $\lambda>0,$ for all smooth function $u,$ there
holds
\begin{equation}
\dfrac{\lambda^{2}}{2}\int_{\mathbb{R}^{N}}\left\vert \nabla u\right\vert
^{2}\;e^{-\frac{|x|^{2}}{\lambda^{2}}}\;dx\geq\inf_{c\in\mathbb{R}}%
\int_{\mathbb{R}^{N}}\left\vert u-c\right\vert ^{2}\;e^{-\frac{|x|^{2}%
}{\lambda^{2}}}\;dx. \label{GTP}%
\end{equation}

\end{theorema}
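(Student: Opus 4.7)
The most direct route is via the Brascamp--Lieb variance inequality. The measure $d\mu_\lambda := e^{-|x|^2/\lambda^2}\,dx$ has density $e^{-\phi_\lambda}$ with $\phi_\lambda(x) = |x|^2/\lambda^2$ strictly convex, and $\mathrm{Hess}\,\phi_\lambda = (2/\lambda^2)\,I$. The Brascamp--Lieb inequality then gives, for any smooth $u$ for which the integrals are finite,
$$\mathrm{Var}_{\mu_\lambda}(u) \;\leq\; \int_{\mathbb{R}^N} \langle (\mathrm{Hess}\,\phi_\lambda)^{-1} \nabla u, \nabla u \rangle \, d\mu_\lambda \;=\; \frac{\lambda^2}{2}\int_{\mathbb{R}^N} |\nabla u|^2 \, d\mu_\lambda,$$
and since $\inf_{c \in \mathbb{R}} \int |u-c|^2 \, d\mu_\lambda = \mathrm{Var}_{\mu_\lambda}(u)$ (the infimum being attained at the weighted mean $\bar u = \int u\,d\mu_\lambda / \int d\mu_\lambda$), this is exactly \eqref{GTP}. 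So at a high level, the theorem is a one-line corollary of Brascamp--Lieb applied to the appropriate strongly log-concave weight.

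A second route, if one prefers to bypass Brascamp--Lieb, is a reduction by scaling. Setting $y = x\sqrt{2}/\lambda$ and $v(y) = u(\lambda y/\sqrt{2})$, one finds $|x|^2/\lambda^2 = |y|^2/2$, $dx = (\lambda/\sqrt{2})^N dy$, and $|\nabla_x u|^2 = (2/\lambda^2)|\nabla_y v|^2$. Substituting into \eqref{GTP} and dividing by $(\lambda/\sqrt{2})^N$ reduces matters to the unscaled Gaussian Poincar\'{e} inequality
$$\int_{\mathbb{R}^N} |\nabla v|^2 \,e^{-|y|^2/2}\,dy \;\geq\; \inf_{c\in\mathbb{R}} \int_{\mathbb{R}^N} |v-c|^2\, e^{-|y|^2/2}\,dy.$$
This classical inequality can be proved by Hermite polynomial expansion: the Ornstein--Uhlenbeck operator $L = -\Delta + y\cdot\nabla$, shown to be self-adjoint on $L^2(e^{-|y|^2/2}dy)$ by integration by parts against $\nabla(e^{-|y|^2/2}) = -y\,e^{-|y|^2/2}$, has eigenvalues $\{0,1,2,\dots\}$ with $0$ corresponding exactly to the constants, yielding spectral gap $1$. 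Equivalently, this is the Bakry--\'{E}mery $CD(1,\infty)$ criterion for the unit Gaussian.

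The main (and essentially only) obstacle is to verify Brascamp--Lieb (or equivalently the Gaussian spectral gap) from first principles. The cleanest self-contained route is the Hermite expansion after scaling, which requires only orthogonality and the eigenvalue identification $LH_k = kH_k$ for the (tensorized) Hermite polynomials. Beyond that, the only residual issue is density: the inequality is stated for smooth $u$, but approximation by $C_c^\infty(\mathbb{R}^N)$ (or a direct proof for $u$ in the weighted Sobolev space associated with $d\mu_\lambda$) is routine, provided $u \in L^2(\mu_\lambda)$ and $\nabla u \in L^2(\mu_\lambda)$, which is the natural setting making both sides of \eqref{GTP} finite.
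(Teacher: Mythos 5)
Your proposal is correct, and it matches the paper's treatment: the paper does not prove this statement itself but cites it from \cite{CFNL} and remarks in the introduction that \eqref{GTP} ``can be derived from \eqref{CGP} by exploiting simple scaling/dilation arguments,'' which is precisely your second route (rescale $y=x\sqrt{2}/\lambda$ and invoke the classical spectral-gap-one Gaussian Poincar\'e inequality). Your first route via Brascamp--Lieb, i.e.\ the Hessian bound $\mathrm{Hess}(|x|^{2}/\lambda^{2})=(2/\lambda^{2})\,\mathrm{Id}$ together with the curvature-dimension criterion of \cite{BGL14}, is equally valid and is in fact the same mechanism the paper uses for its hyperbolic analogues (e.g.\ Lemma \ref{lemma1stineq}), where scaling is unavailable.
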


When $\lambda=1$, (\ref{GTP}) becomes

\begin{theorema}
For all smooth function $u,$ there holds
\begin{equation}
\int_{\mathbb{R}^{N}}\left\vert \nabla u\right\vert ^{2}(2\pi)^{-\frac{N}{2}%
}\,e^{-\frac{|x|^{2}}{2}}\;dx\geq\int_{\mathbb{R}^{N}}\left\vert
u\;-\;(2\pi)^{-\frac{N}{2}}\int_{\mathbb{R}^{N}}u\;e^{-\frac{|x|^{2}}{2}%
}\;dx\right\vert ^{2}(2\pi)^{-\frac{N}{2}}e^{-\frac{|x|^{2}}{2}}\;dx.
\label{CGP}%
\end{equation}

\end{theorema}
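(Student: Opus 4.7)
My plan is to derive the stated Gaussian Poincar\'e inequality (CGP) directly from the scale-dependent Poincar\'e inequality (GTP) recorded immediately before it; the passage is just a rescaling in $\lambda$ together with a one-variable optimization. First, I would specialize (GTP) to $\lambda = \sqrt{2}$. This choice converts the weight $e^{-|x|^{2}/\lambda^{2}}$ into the desired $e^{-|x|^{2}/2}$ and makes the prefactor $\lambda^{2}/2$ equal to $1$, so (GTP) reduces to
\[
\int_{\mathbb{R}^{N}}|\nabla u|^{2}\, e^{-|x|^{2}/2}\,dx \;\geq\; \inf_{c\in\mathbb{R}} \int_{\mathbb{R}^{N}}|u-c|^{2}\, e^{-|x|^{2}/2}\,dx.
\]

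Second, I would evaluate the infimum on the right-hand side. Since $c\mapsto\int_{\mathbb{R}^{N}}|u-c|^{2}e^{-|x|^{2}/2}\,dx$ is a quadratic polynomial in $c$, completing the square (or differentiating) identifies its minimizer as the Gaussian mean
\[
c^{\star} \;=\; \frac{\int_{\mathbb{R}^{N}} u\, e^{-|x|^{2}/2}\,dx}{\int_{\mathbb{R}^{N}} e^{-|x|^{2}/2}\,dx} \;=\; (2\pi)^{-N/2}\int_{\mathbb{R}^{N}} u\, e^{-|x|^{2}/2}\,dx,
\]
where I have used $\int_{\mathbb{R}^{N}} e^{-|x|^{2}/2}\,dx = (2\pi)^{N/2}$.

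Finally, substituting $c=c^{\star}$ and multiplying both sides by $(2\pi)^{-N/2}$ bundles the Gaussian normalization into each integral, producing precisely (CGP): the right-hand side is then the variance of $u$ under the standard Gaussian probability measure. The only point requiring some care is integrability, so that $c^{\star}$ is well-defined and the right-hand integral is finite; this is automatic for $u\in C_{c}^{\infty}(\mathbb{R}^{N})$, and a routine density argument extends the inequality to the natural weighted Sobolev space. I do not foresee a substantive obstacle beyond (GTP) itself: the entire passage from (GTP) to (CGP) is a rescaling of the weight together with a one-dimensional minimization.
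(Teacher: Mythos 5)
Your proposal is correct and is essentially the paper's own (implicit) argument: the paper presents (CGP) as the specialization of (GTP) followed by the observation that the infimum over $c$ is attained at the Gaussian mean. Note that the paper writes ``when $\lambda=1$'', which appears to be a slip --- your choice $\lambda=\sqrt{2}$ is the one that actually produces the weight $e^{-|x|^{2}/2}$ and prefactor $1$.
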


We note that (\ref{CGP}) is the classical Poincar\'{e} inequality for Gaussian
measure that has been very well studied in the literature. In particular, we
refer the interested reader to the book \cite{BGL14} where the authors used
the notion of Curvature-Dimension condition to study the Poincar\'{e}
inequality and many other functional, isoperimetric and transportation
inequalities, and to \cite{WB} where Beckner proved a generalized Poincar\'{e}
inequality that interpolates between the classical Poincar\'{e} and the
logarithmic Sobolev inequalities. It is also worth mentioning that in
\cite{Mil09}, E. Milman proved deep results that Cheeger's isoperimetric
inequality, Poincar\'{e}'s inequality and exponential concentration of
Lipschitz functions are all quantitatively equivalent under certain convexity
assumptions. See also \cite{Mil10, Mil091}, for related results.

In the hyperbolic space setting, a version of (\ref{CGP}) has been studied.
For instance, in \cite{BMM22}, the authors established, among others, that
\begin{equation}
\int_{\mathbb{H}^{N}}\left\vert \nabla_{\mathbb{H}}u\right\vert ^{2}%
e^{-\rho^{2}(x)}dV_{\mathbb{H}}\geq K\inf_{c\in\mathbb{R}}\int_{\mathbb{H}%
^{N}}|u-c|^{2}e^{-\rho^{2}(x)}dV_{\mathbb{H}}, \label{CGPH}%
\end{equation}
for some universal constant $K>0$. See also our Corollary
\ref{weightedPoinIneq} in which we can derive (\ref{CGPH}) as a simple
consequence of our main results. We also refer the interested reader to
\cite{MRS11, RS11}, for instance, for related results.

Motivated by the stability results in \cite{CFNL} and in order to answer
Question 1, it is natural to raise the following question about the weighted
Poincar\'{e} inequality with Gaussian type measures (\ref{GTP}) on
$\mathbb{H}^{N}$:

\noindent\textbf{{Question 2:}} \textit{Does there exist a universal constant
}$C$,\textit{ independent of }$\lambda$\textit{ and }$u$,\textit{ such that
for all smooth function }$u$\textit{ and }$\lambda>0$\textit{, there holds }%
\begin{equation}
\dfrac{\lambda^{2}}{2}\int_{\mathbb{H}^{N}}\left\vert \nabla_{\mathbb{H}%
}u\right\vert ^{2}\;e^{-\frac{\rho^{2}(x)}{\lambda^{2}}}\;dV_{\mathbb{H}}\geq
C\inf_{c\in\mathbb{R}}\int_{\mathbb{H}^{N}}\left\vert u-c\right\vert
^{2}\;e^{-\frac{\rho^{2}(x)}{\lambda^{2}}}\;dV_{\mathbb{H}}\text{?}
\label{GTPH}%
\end{equation}

To our knowledge, an inequality like (\ref{GTPH}) has not been established for
hyperbolic space. The main challenging lies in the fact that the scaling
arguments are totally missing in the hyperbolic space. Thus, \eqref{CGPH} does
not lead to \eqref{GTPH} on hyperbolic spaces. Moreover, the fact that
$\mathbb{H}^{N}$ has constant negative sectional curvature $-1$ creates extra
difficulty. Indeed, in the Euclidean case, (\ref{GTP}) can be derived from
(\ref{CGP}) by exploiting simple scaling/dilation arguments, which are not
available in $\mathbb{H}^{N}$, as explained earlier. Therefore, exploring a
family of weighted Poincar\'{e} inequalities in hyperbolic space that depend
on $\lambda$ is of significant interest.

\medskip

The second principal aim of this article is to try to answer the above
Question 2 and establish some versions of the weighted Poincar\'{e} type
inequalities with Gaussian type measures on $\mathbb{H}^{N}$. More precisely,
we would like to investigate the validity of the weighted Poincar\'{e}
inequalities of the form:%
\begin{equation}
\dfrac{\lambda^{2}}{2}\int_{\mathbb{H}^{N}}\left\vert \nabla_{\mathbb{H}%
}u\right\vert ^{2}\;M_{\lambda}(\rho\left(  x\right)  )\;dV_{\mathbb{H}}\geq
K\inf_{c}\int_{\mathbb{H}^{N}}|u-c|^{2}\;N_{\lambda}(\rho\left(  x\right)
)\;dV_{\mathbb{H}}. \label{MLP}%
\end{equation}
Here $\lambda>0$, $M_{\lambda}$ and $N_{\lambda}$ are suitable nonnegative
potentials, $K>0$ is a constant independent of $\lambda$, and $u$ belongs to
an appropriate function space. In other words, we aim to develop a
scale-dependent weighted Poincar\'{e} inequality for hyperbolic space.

In this paper, we will present three admissible pairs $\left(  M_{\lambda
},N_{\lambda}\right)  $ for (\ref{MLP}) and derive three different versions of
the weighted Poincar\'{e} type inequalities. More precisely, in Subsection
\ref{subsct3.1}, by applying the Curvature-Dimension condition on the
Euclidean space, we prove that for all $\lambda>0$, $M_{\lambda}%
=e^{-\frac{\rho^{2}(x)}{\lambda^{2}}}$ and $N_{\lambda}=\left(  1-\tanh
^{2}\left(  \dfrac{\rho(x)}{2}\right)  \right)  ^{N}\cdot e^{-\frac{\rho
^{2}(x)}{\lambda^{2}}}$ satisfy the weighted Poincar\'{e} inequality
(\ref{MLP}). Indeed, we will establish the following result:

\begin{theorem}
\label{1stineq}{(First weighted Poincar\'{e} inequality)} Let $N\geq2.$ For
all $u\in C_{0}^{\infty}(\mathbb{H}^{N})$, there holds
\begin{equation}
\dfrac{\lambda^{2}}{2}\int_{\mathbb{H}^{N}}\left\vert \nabla_{\mathbb{H}%
}u\right\vert ^{2}e^{-\frac{\rho^{2}(x)}{\lambda^{2}}}dV_{\mathbb{H}}\geq
K\inf_{c}\int_{\mathbb{H}^{N}}|u-c|^{2}\left(  1-\tanh^{2}\left(  \dfrac
{\rho(x)}{2}\right)  \right)  ^{N}e^{-\frac{\rho^{2}(x)}{\lambda^{2}}%
}dV_{\mathbb{H}}, \label{fixedL^2Pineq}%
\end{equation}
for all $\lambda>0$ and $K$ is a constant independent of $\lambda$.
\end{theorem}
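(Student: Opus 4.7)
The plan is to transport inequality (\ref{fixedL^2Pineq}) to a weighted Poincar\'e inequality on the Euclidean unit ball $B^N$ for the log-concave measure $e^{-V}\,dx$, $V:=\rho^{2}(x)/\lambda^{2}$, and then invoke the Bakry--Emery $CD(\kappa,\infty)$ criterion in Euclidean space. The absence of scaling on $\mathbb H^N$ is bypassed because the \emph{Euclidean} Hessian of $\rho^{2}(|x|)$ on $B^N$ is uniformly bounded below by a constant multiple of $\lambda^{-2}$, which is enough even though the intrinsic hyperbolic Bakry--Emery fails for the same weight (the $\mathbb H^N$-Hessian of $V$ cannot beat the $-(N-1)g$ Ricci term when $\lambda$ is large).

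\textbf{Step 1 (unfolding).} Using $dV_{\mathbb H}=(2/(1-|x|^{2}))^{N}\,dx$, $|\nabla_{\mathbb H}u|^{2}=((1-|x|^{2})/2)^{2}|\nabla u|^{2}$, and $1-\tanh^{2}(\rho(x)/2)=1-|x|^{2}$, the left-hand side of (\ref{fixedL^2Pineq}) becomes $\lambda^{2}\cdot 2^{N-3}\int_{B^N}|\nabla u|^{2}(1-|x|^{2})^{2-N}e^{-V}\,dx$, while the right-hand side becomes $K\cdot 2^{N}\inf_{c}\int_{B^N}|u-c|^{2}e^{-V}\,dx$. Cancelling $2^{N}$ and using $(1-|x|^{2})^{2-N}\geq 1$ for $N\geq 2$, it suffices to prove the Euclidean Gaussian-type Poincar\'e inequality
\[
\int_{B^N}|\nabla u|^{2}\,e^{-V}\,dx\;\geq\;\frac{8}{\lambda^{2}}\,\inf_{c\in\mathbb R}\int_{B^N}|u-c|^{2}\,e^{-V}\,dx
\]
for every $u\in C_{0}^{\infty}(\mathbb H^N)$, which extends by zero to a function in $C_{c}^{\infty}(B^N)$.

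\textbf{Step 2 (uniform Euclidean convexity of $V$).} Writing $V(x)=f(|x|)$ with $f(r)=\rho^{2}(r)/\lambda^{2}$ and $\rho(r)=\log((1+r)/(1-r))$, the Euclidean Hessian $\mathrm{Hess}_{\mathrm{eucl}}V$ has eigenvalues $f''(r)$ (radial) and $f'(r)/r$ (tangential, multiplicity $N-1$). Using $\rho'(r)=2/(1-r^{2})$, a direct computation yields
\[
f''(r)=\frac{8\bigl(1+r\rho(r)\bigr)}{\lambda^{2}(1-r^{2})^{2}},\qquad \frac{f'(r)}{r}=\frac{4\rho(r)}{\lambda^{2}\,r(1-r^{2})}.
\]
The first is $\geq 8/\lambda^{2}$ since $1+r\rho\geq 1$ and $(1-r^{2})^{2}\leq 1$ on $(0,1)$. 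The second is $\geq 8/\lambda^{2}$ exactly when $\rho(r)\geq 2r(1-r^{2})$, which is immediate from the Taylor series $\rho(r)=2r+\tfrac{2}{3}r^{3}+\tfrac{2}{5}r^{5}+\cdots\geq 2r\geq 2r(1-r^{2})$. Hence $\mathrm{Hess}_{\mathrm{eucl}}V\geq (8/\lambda^{2})I$ throughout $B^N$.

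\textbf{Step 3 (conclusion).} Extending $V$ by $+\infty$ off $B^N$ yields a strongly log-concave density on $\mathbb R^N$ that, after normalization, satisfies $CD(8/\lambda^{2},\infty)$. The standard Bakry--Emery / Brascamp--Lieb spectral gap inequality then delivers the Euclidean Poincar\'e inequality of Step~1, so (\ref{fixedL^2Pineq}) follows with $K=1$, independent of $\lambda$. The \emph{main technical point} is precisely the uniform lower bound on $\mathrm{Hess}_{\mathrm{eucl}}V$ near $\partial B^N$, where $\rho(r)\to\infty$ competes with $r(1-r^{2})\to 0$; the elementary inequality $\rho(r)\geq 2r(1-r^{2})$ is what rules out this potential degeneracy and makes the constant $\lambda$-independent.
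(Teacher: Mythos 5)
Your proposal is correct and follows essentially the same route as the paper: rewrite the inequality on the Euclidean unit ball, use $(1-|x|^{2})^{2-N}\geq 1$ to drop the conformal factor on the gradient side, and verify the Euclidean Bakry--Emery condition $\mathrm{Hess}\bigl(\rho^{2}(x)/\lambda^{2}\bigr)\geq(8/\lambda^{2})\,\mathrm{Id}$ to invoke the Gaussian-type Poincar\'e inequality from \cite{BGL14}. The only (cosmetic) difference is that you bound the Hessian via its radial and tangential eigenvalues $f''(r)$ and $f'(r)/r$, whereas the paper writes out the matrix entries and applies Sylvester's criterion to identify the same constant $8$.
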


The major drawback of Theorem \ref{1stineq} is the presence of the weight
function $\left(  1-\tanh^{2}\left(  \frac{\rho(x)}{2}\right)  \right)  ^{N}$.
In Subsection \ref{subsct3.2}, by combining the Curvature-Dimension condition
on Riemannian manifolds and some delicate arguments, we are able to remove
that term and show in the below Theorem~\ref{2ndineq} that when $\lambda$ is
bounded from above, then the pair $M_{\lambda}=N_{\lambda}=e^{-\frac{\rho
^{2}(x)}{\lambda^{2}}}$ are admissible for (\ref{MLP}).

\begin{theorem}
\label{2ndineq}{(Second weighted Poincar\'{e} inequality)} Let $N\geq2.$ For
all $u\in C_{0}^{\infty}(\mathbb{H}^{N})$ and for any positive $\beta>0$ there
holds
\begin{equation}
\dfrac{\lambda^{2}}{2}\int_{\mathbb{H}^{N}}\left\vert \nabla_{\mathbb{H}%
}u\right\vert ^{2}e^{-\frac{\rho^{2}(x)}{\lambda^{2}}}dV_{\mathbb{H}}\geq
K\left(  \beta\right)  \inf_{c}\int_{\mathbb{H}^{N}}|u-c|^{2}e^{-\frac
{\rho^{2}(x)}{\lambda^{2}}}dV_{\mathbb{H}}, \label{2ndcaseineq}%
\end{equation}
for all $\lambda\in(0,\beta]$, and $K\left(  \beta\right)  $ is independent of
$\lambda$.\medskip
\end{theorem}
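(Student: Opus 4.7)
The plan is to deploy the Bakry--\'Emery Curvature-Dimension (CD) criterion on the weighted hyperbolic space $(\mathbb{H}^N, d\mu_\lambda)$ with $d\mu_\lambda := e^{-\rho^2(x)/\lambda^2}\, dV_{\mathbb{H}}$, and to combine it with the already-established first weighted Poincar\'e inequality (Theorem \ref{1stineq}) in order to remove the obstructing weight factor $(1-\tanh^2(\rho(x)/2))^N$ on the right-hand side of \eqref{fixedL^2Pineq}.

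First I would compute the Bakry--\'Emery Ricci tensor $\mathrm{Ric}_\infty = \mathrm{Ric} + \nabla^2 V$ for $V = \rho^2/\lambda^2$. Using $\mathrm{Ric} = -(N-1)g$ on $\mathbb{H}^N$ together with the hyperbolic Hessian identity $\nabla^2\rho = \coth(\rho)\bigl(g - d\rho\otimes d\rho\bigr)$, one obtains
\[
\mathrm{Ric}_\infty \;=\; \Bigl(\tfrac{2\rho\coth\rho}{\lambda^2} - (N-1)\Bigr) g \;+\; \tfrac{2\bigl(1-\rho\coth\rho\bigr)}{\lambda^2}\, d\rho\otimes d\rho,
\]
and since $\rho\coth\rho \geq 1$ this gives the quadratic-form bound $\mathrm{Ric}_\infty \geq \bigl(\tfrac{2}{\lambda^2} - (N-1)\bigr)\, g$. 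For $\lambda^2 < \tfrac{2}{N-1}$ the right-hand side is strictly positive, so the CD$(K(\lambda),\infty)$ condition holds with $K(\lambda) := \tfrac{2}{\lambda^2} - (N-1)$, and the standard Bakry--\'Emery spectral-gap theorem yields the Poincar\'e inequality for $\mu_\lambda$. Multiplying through by $\lambda^2/2$ yields \eqref{2ndcaseineq} with constant $1 - (N-1)\lambda^2/2 \geq 1 - (N-1)\beta^2/2 > 0$, valid as long as $\beta^2 < \tfrac{2}{N-1}$.

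For the complementary regime $\lambda^2 \in \bigl[\tfrac{2}{N-1}, \beta^2\bigr]$, where the uniform CD bound fails near the origin, I would couple Theorem \ref{1stineq} with a splitting argument. Pick $R = R(\beta)$ large enough that $\tfrac{2\rho\coth\rho}{\beta^2} - (N-1) \geq K'(\beta) > 0$ for all $\rho \geq R(\beta)$, which is possible since $\rho\coth\rho \to \infty$ as $\rho \to \infty$. On the inner region $B_{R(\beta)}$, the factor $(1-\tanh^2(\rho/2))^N$ appearing in Theorem \ref{1stineq} is bounded below by the positive constant $\operatorname{sech}^{2N}(R(\beta)/2)$, so Theorem \ref{1stineq} controls $\int_{B_{R(\beta)}}|u-c|^2\, d\mu_\lambda$ by the weighted Dirichlet energy up to a constant depending only on $\beta$. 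On the outer region $\mathbb{H}^N \setminus B_{R(\beta)}$, the Bakry--\'Emery tensor satisfies $\mathrm{Ric}_\infty \geq K'(\beta) g$ uniformly in $\lambda \leq \beta$, so a localized CD argument with a smooth cutoff supported outside $B_{R(\beta)/2}$ yields an analogous estimate for the tail. Summing the two estimates and selecting $c$ to be the $\mu_\lambda$-weighted mean of $u$ absorbs the cross-terms generated by the cutoff.

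\emph{Main obstacle.} The principal technical difficulty is the delicate merging of the interior and exterior estimates into a single Poincar\'e inequality with constant $K(\beta)$ uniform in $\lambda \in (0,\beta]$. The two pieces come with different natural choices of the centering constant $c$, and the cross-terms supported in the transition annulus $B_{R(\beta)} \setminus B_{R(\beta)/2}$ must be controlled via a careful triangle-inequality argument that exploits the positivity margins from both regions. The dependence of $K(\beta)$ on $\beta$ enters precisely through the cutoff scale $R(\beta)$ and the exterior CD constant $K'(\beta)$; the degeneration $K(\beta) \to 0$ as $\beta \to \infty$ is consistent with the absence of a scale-invariant analogue of \eqref{GTP} on the full noncompact hyperbolic space.
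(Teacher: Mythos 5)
Your Case 1 (small $\lambda$) is correct and is essentially the paper's own argument: the paper likewise computes the Hessian of $\rho^{2}$ on $\mathbb{H}^{N}$, finds that the sharp tensor bound is $\nabla^{2}(\rho^{2})\geq 2g$, and applies the Bakry--\'Emery criterion to get the Poincar\'e inequality for $\lambda^{2}<\tfrac{2}{N-1}$. The problem is your Case 2, where the proposal contains a genuine error. You claim that for $\rho\geq R(\beta)$ one has $\mathrm{Ric}_{\infty}\geq K'(\beta)\,g>0$ uniformly in $\lambda\leq\beta$, on the grounds that $\rho\coth\rho\to\infty$. But your own displayed formula shows that in the radial direction the two $\rho\coth\rho$ contributions cancel exactly: for the unit radial vector $X=\nabla\rho$,
\[
\mathrm{Ric}_{\infty}(X,X)=\Bigl(\tfrac{2\rho\coth\rho}{\lambda^{2}}-(N-1)\Bigr)+\tfrac{2(1-\rho\coth\rho)}{\lambda^{2}}=\tfrac{2}{\lambda^{2}}-(N-1),
\]
identically in $\rho$ (this just reflects $\partial_{\rho}^{2}(\rho^{2})=2$). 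Hence for $\lambda^{2}>\tfrac{2}{N-1}$ the Bakry--\'Emery tensor is negative in the radial direction at \emph{every} point, and no choice of $R(\beta)$ produces a positive lower bound on the exterior region. The growth of $\rho\coth\rho$ only helps in directions orthogonal to $\nabla\rho$, so the entire outer-region step, and with it the gluing argument, collapses. (Separately, even if the exterior bound were true, "a localized CD argument with a smooth cutoff" is not a standard deduction: the Bakry--\'Emery semigroup argument does not localize to the non-convex exterior of a ball without further work.)

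The fix, which is what the paper does, is to abandon curvature in the large-$\lambda$ regime and instead exploit the \emph{first-order} term of the potential: with $v=\rho^{2}/\lambda^{2}$ one has $|\nabla_{\mathbb{H}}v|^{2}=4\rho^{2}/\lambda^{4}$, which does grow quadratically in $\rho$, so that $a|\nabla_{\mathbb{H}}v|^{2}-\Delta_{\mathbb{H}}v\geq C>0$ outside a ball $B_{R_{0}}$, uniformly for $\lambda\in[\alpha,\beta]$. This drift condition yields a Lyapunov function $W\geq 1$ with $-\Delta_{M,\mathbb{H}}W\geq\theta W-b\mathbf{1}_{B_{R_{0}}}$ (via \cite{BMM22}), and the standard Lyapunov-function argument then reduces the weighted Poincar\'e inequality to a local Poincar\'e inequality on $B_{R_{0}}$, where the weight $e^{-\rho^{2}/\lambda^{2}}$ is bounded above and below uniformly in $\lambda\in[\alpha,\beta]$. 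Your idea of using Theorem \ref{1stineq} to supply the interior estimate is reasonable (and close in spirit to the paper's use of a local Poincar\'e inequality on $B_{R_{0}}$), but it cannot rescue the proposal, because the exterior estimate it is meant to be glued to is unavailable by your method.
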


Finally, in Subsection \ref{subsct3.3}, when $\lambda$ is large enough, we
establish the weighted Poincar\'{e} inequality (\ref{MLP}) with the potentials
$M_{\lambda}=N_{\lambda}=e^{-\left(  \frac{\rho(x)}{\sinh\rho(x)}\right)
^{\frac{(N-1)}{2}}\frac{\rho^{2}(x)}{\lambda^{2}}}$ in Theorem~\ref{3rdineq}.

\begin{theorem}
{(Third weighted Poincar\'{e} inequality)}\label{3rdineq}Let $N\geq2$ and let
$\beta>0$ be large enough. For all $u\in C_{0}^{\infty}(\mathbb{H}^{N})$,
there holds
\begin{equation}
\dfrac{\lambda^{2}}{2}\int_{\mathbb{H}^{N}}|\nabla_{\mathbb{H}}u|^{2}%
e^{-\left(  \frac{\rho(x)}{\sinh\rho(x)}\right)  ^{\frac{(N-1)}{2}}\frac
{\rho^{2}(x)}{\lambda^{2}}}dV_{\mathbb{{H}}}\geq K\left(  \beta\right)
\inf_{c}\int_{\mathbb{H}^{N}}|u-c|^{2}e^{-\left(  \frac{\rho(x)}{\sinh\rho
(x)}\right)  ^{\frac{(N-1)}{2}}\frac{\rho^{2}(x)}{\lambda^{2}}}dV_{\mathbb{{H}%
}} \label{3rdcaseIneq}%
\end{equation}
for all $\lambda\geq\beta$ and $K\left(  \beta\right)  >0$ is independent of
$\lambda$.
\end{theorem}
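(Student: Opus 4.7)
The plan is to reduce inequality \eqref{3rdcaseIneq} to the classical hyperbolic $L^{2}$-gap inequality \eqref{classicPoin} through an elementary weight-comparison argument, exploiting the fact that the weight $e^{-V_{\lambda}}$ with $V_{\lambda}(\rho):=\bigl(\rho/\sinh\rho\bigr)^{(N-1)/2}\rho^{2}/\lambda^{2}$ is pinched between two positive constants that both tend to $1$ as $\lambda\to\infty$. The essential step is to show that $f(\rho):=\rho^{2}\bigl(\rho/\sinh\rho\bigr)^{(N-1)/2}$ is uniformly bounded on $[0,\infty)$ by a constant $C_{N}$ depending only on $N$. Since $\sinh\rho\geq\rho$, we have $f(\rho)\leq\rho^{2}$, so $f$ is bounded on compact intervals; for large $\rho$, $\sinh\rho\geq e^{\rho}/4$ forces $f(\rho)\leq\rho^{2}\bigl(4\rho\,e^{-\rho}\bigr)^{(N-1)/2}\to 0$. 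Continuity of $f$ gives $C_{N}:=\sup_{\rho\geq 0}f(\rho)<\infty$, and therefore $V_{\lambda}(\rho)=f(\rho)/\lambda^{2}\leq C_{N}/\lambda^{2}$, which yields the two-sided bound
\[
e^{-C_{N}/\lambda^{2}}\;\leq\;e^{-V_{\lambda}(\rho)}\;\leq\;1\qquad\text{for all }\rho\geq 0\text{ and all }\lambda>0.
\]

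Next I would reduce the infimum in the right-hand side of \eqref{3rdcaseIneq}. The lower pinching bound $e^{-V_{\lambda}}\geq e^{-C_{N}/\lambda^{2}}>0$ combined with the infinite hyperbolic volume of $\mathbb{H}^{N}$ implies that the measure $e^{-V_{\lambda}}\,dV_{\mathbb{H}}$ has infinite mass on the complement of $\operatorname{supp}(u)$. Hence, for $u\in C_{0}^{\infty}(\mathbb{H}^{N})$ and any $c\neq 0$, $\int_{\mathbb{H}^{N}\setminus\operatorname{supp}(u)}c^{2}\,e^{-V_{\lambda}}\,dV_{\mathbb{H}}=\infty$, so the infimum is finite only at $c=0$:
\[
\inf_{c}\int_{\mathbb{H}^{N}}|u-c|^{2}\,e^{-V_{\lambda}}\,dV_{\mathbb{H}}\;=\;\int_{\mathbb{H}^{N}}u^{2}\,e^{-V_{\lambda}}\,dV_{\mathbb{H}}.
\]

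Finally, combining the weight pinching with \eqref{classicPoin}, applying $e^{-V_{\lambda}}\geq e^{-C_{N}/\lambda^{2}}$ to the gradient term and $e^{-V_{\lambda}}\leq 1$ to the $L^{2}$ term, yields
\[
\frac{\lambda^{2}}{2}\int_{\mathbb{H}^{N}}|\nabla_{\mathbb{H}}u|^{2}\,e^{-V_{\lambda}}\,dV_{\mathbb{H}}\;\geq\;\frac{\lambda^{2}}{2}\,e^{-C_{N}/\lambda^{2}}\,\frac{(N-1)^{2}}{4}\int_{\mathbb{H}^{N}}u^{2}\,e^{-V_{\lambda}}\,dV_{\mathbb{H}}.
\]
The map $\lambda\mapsto\lambda^{2}e^{-C_{N}/\lambda^{2}}$ has logarithmic derivative $2/\lambda+2C_{N}/\lambda^{3}>0$, so it is strictly increasing on $(0,\infty)$ and its infimum over $\lambda\geq\beta$ is attained at $\lambda=\beta$. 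Setting $K(\beta):=\tfrac{(N-1)^{2}}{8}\beta^{2}e^{-C_{N}/\beta^{2}}$ completes the proof. The only genuine technical input is the classical $L^{2}$-gap inequality; the only computation is the elementary verification that $\sup f<\infty$, which is the main (and quite mild) obstacle in this route. Notably, this argument imposes no lower bound on $\beta$; the assumption that $\beta$ be large in the statement is not required by the present proof, but it may arise naturally from a Bakry--\'{E}mery/Curvature--Dimension approach in the spirit of Theorems~\ref{1stineq}--\ref{2ndineq}, in which $\operatorname{Hess}V_{\lambda}$ must dominate the negative Ricci curvature $-(N-1)g$ of $\mathbb{H}^{N}$, a constraint that is genuinely restrictive for small $\lambda$.
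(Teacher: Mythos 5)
Your proof is correct, and it takes a genuinely different and more elementary route than the paper's. The paper proves the theorem via a ground-state substitution $g=u\sqrt{M_\lambda}$, which yields
\[
\int_{\mathbb{H}^{N}}|\nabla_{\mathbb{H}}u|^{2}M_\lambda\,dV_{\mathbb{H}}\geq\int_{\mathbb{H}^{N}}\Bigl(\tfrac{(N-1)^{2}}{4}+\tfrac{1}{4}|\nabla_{\mathbb{H}}v|^{2}-\tfrac{1}{2}\Delta_{\mathbb{H}}v\Bigr)|u|^{2}M_\lambda\,dV_{\mathbb{H}},\qquad v=-\log M_\lambda,
\]
and then carries out a lengthy explicit computation of $\tfrac14|\nabla_{\mathbb{H}}v|^2-\tfrac12\Delta_{\mathbb{H}}v$ to check that $\lambda^2\bigl(\tfrac{(N-1)^2}{4}+\tfrac14|\nabla_{\mathbb{H}}v|^2-\tfrac12\Delta_{\mathbb{H}}v\bigr)$ is bounded below for $\lambda$ large. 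You instead observe the structural point that makes this particular weight work: $f(\rho)=\rho^{2}(\rho/\sinh\rho)^{(N-1)/2}$ is bounded on $[0,\infty)$, so the weight is pinched between $e^{-C_N/\lambda^2}$ and $1$, and the inequality reduces to the classical spectral gap \eqref{classicPoin} with the explicit constant $K(\beta)=\tfrac{(N-1)^2}{8}\beta^2e^{-C_N/\beta^2}$. (Your digression about the infimum being attained at $c=0$ is true for compactly supported $u$ but unnecessary: the inequality with $c=0$ already dominates the infimum.) What each approach buys: yours is shorter, avoids all the derivative computations, and — as you note — removes the hypothesis that $\beta$ be large, so it actually proves a slightly stronger statement. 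The paper's method is the one that generalizes to weights not uniformly comparable to $1$ (where a two-sided pinch fails), and, more to the point here, the exact expansion of $\lambda^{2}M_{\lambda}\bigl|\nabla_{\mathbb{H}}(uM_{\lambda}^{-1/2})\bigr|^{2}$ produced along the way is reused verbatim in Corollary \ref{Cor 3.3} to define the modified deficit $\Tilde{\delta}_{1,\mathbb{H}}$; your argument would not supply those identities. For the theorem as stated, however, your proof is complete and valid.
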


The third central purpose of this article is to use the weighted Poincar\'{e}
type inequalities established in Section \ref{sct3} and the Heisenberg
Uncertainty Principle identity in Subsection \ref{subsct4.3} to investigate
the Question 1. More precisely, we will study the stability results for the
Heisenberg Uncertainty Principle on the hyperbolic space $\mathbb{H}^{N}$. In
our initial effort to achieve an $L^{2}$-stability result for the Heisenberg
uncertainty principle on the hyperbolic space $\mathbb{H}^{N}$, by taking
advantage of the weighted \textquotedblleft scale-dependent" Poincar\'{e}
inequalities, we establish the following stability result:

\begin{theorem}
\label{Cor 3.1} Let $N\geq2.$ For all $u\in W^{1,2}(\mathbb{H}^{N}%
)\cap\{u:\int_{\mathbb{H}^{N}}\rho^{2}(x)|u|^{2}dV_{\mathbb{H}}<\infty\}$
there holds
\begin{align*}
\delta_{1,\mathbb{H}}(u)  &  \geq K\inf_{c\in\mathbb{R}\text{,~}\lambda>0}%
\int_{\mathbb{H}^{N}}\left\vert u-ce^{-\frac{\rho^{2}(x)}{2\lambda^{2}}%
}\right\vert ^{2}\left(  1-\tanh^{2}\left(  \dfrac{\rho(x)}{2}\right)
\right)  ^{N}dV_{\mathbb{H}}\\
&  \geq K\inf_{c\in\mathbb{R}\text{,~}\lambda>0}\left\Vert u-ce^{-\frac
{\rho^{2}(x)}{2\lambda^{2}}}\right\Vert _{L^{2}(\mathbb{H}^{N},M)}^{2}\\
&  \geq Kd_{1}^{2}(u,E,M)
\end{align*}
where $K$ is a universal positive constant, $M=\left(  1-\tanh^{2}\left(
\dfrac{\rho(x)}{2}\right)  \right)  ^{N}$. Consequently,
\[
\delta_{2,\mathbb{H}}(u)\geq K\left(  \int_{\mathbb{H}^{N}}\left[
N+(N-1)\dfrac{\rho(x)\cosh\rho(x)-\sinh\rho(x)}{\sinh\rho(x)}\right]
|u|^{2}dV_{\mathbb{H}}\right)  d_{1}^{2}(u,E,M)+K^{2}d_{1}^{4}(u,E,M).
\]

\end{theorem}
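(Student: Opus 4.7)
The plan is to combine the exact-remainder Heisenberg identity from Theorem~\ref{M2} with the scale-dependent weighted Poincar\'{e} inequality of Theorem~\ref{1stineq}. For $u\in C_{0}^{\infty}(\mathbb{H}^{N})\setminus\{0\}$, let
\[
\lambda_{\ast}:=\left(\dfrac{\int_{\mathbb{H}^{N}}\rho^{2}(x)|u|^{2}\,dV_{\mathbb{H}}}{\int_{\mathbb{H}^{N}}|\nabla_{\mathbb{H}}u|^{2}\,dV_{\mathbb{H}}}\right)^{1/4}
\]
be the Heisenberg-optimal scale appearing in Theorem~\ref{M2}, and set $v:=u\,e^{\rho^{2}(x)/(2\lambda_{\ast}^{2})}$, which also lies in $C_{0}^{\infty}(\mathbb{H}^{N})$ since $u$ does. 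Theorem~\ref{M2} then rewrites the deficit as
\[
\delta_{1,\mathbb{H}}(u)=\dfrac{\lambda_{\ast}^{2}}{2}\int_{\mathbb{H}^{N}}e^{-\rho^{2}(x)/\lambda_{\ast}^{2}}|\nabla_{\mathbb{H}}v|^{2}\,dV_{\mathbb{H}},
\]
which is precisely the Dirichlet form on the left-hand side of Theorem~\ref{1stineq} evaluated at $\lambda=\lambda_{\ast}$ on the test function $v$.

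Applying Theorem~\ref{1stineq} to $v$ at scale $\lambda_{\ast}$ together with the algebraic identity
\[
|v-c|^{2}e^{-\rho^{2}(x)/\lambda_{\ast}^{2}}=\bigl|u\,e^{\rho^{2}(x)/(2\lambda_{\ast}^{2})}-c\bigr|^{2}e^{-\rho^{2}(x)/\lambda_{\ast}^{2}}=\bigl|u-ce^{-\rho^{2}(x)/(2\lambda_{\ast}^{2})}\bigr|^{2}
\]
(note that the weight $M(x):=(1-\tanh^{2}(\rho(x)/2))^{N}$ is independent of $\lambda$) yields
\[
\delta_{1,\mathbb{H}}(u)\geq K\inf_{c\in\mathbb{R}}\int_{\mathbb{H}^{N}}\bigl|u-ce^{-\rho^{2}(x)/(2\lambda_{\ast}^{2})}\bigr|^{2}M(x)\,dV_{\mathbb{H}}.
\]
Since $\lambda_{\ast}$ is just one admissible value, enlarging the infimum to range over all $\lambda>0$ can only decrease the right-hand side, and the reparameterization $\alpha=1/(2\lambda^{2})$ identifies the resulting infimum with $Kd_{1}^{2}(u,E,M)$, giving the first chain of inequalities. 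The extension from $C_{0}^{\infty}(\mathbb{H}^{N})$ to the full admissible class $W^{1,2}(\mathbb{H}^{N})\cap L^{2}(\mathbb{H}^{N},\rho^{2}\,dV_{\mathbb{H}})$ is a standard truncation-and-density argument, passing to the limit on both deficits by dominated convergence.

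For the second assertion, set $A:=\bigl(\int_{\mathbb{H}^{N}}|\nabla_{\mathbb{H}}u|^{2}dV_{\mathbb{H}}\bigr)^{1/2}\bigl(\int_{\mathbb{H}^{N}}\rho^{2}|u|^{2}dV_{\mathbb{H}}\bigr)^{1/2}$ and $B:=\frac{1}{2}\int_{\mathbb{H}^{N}}\bigl[N+(N-1)(\rho\cosh\rho-\sinh\rho)/\sinh\rho\bigr]|u|^{2}dV_{\mathbb{H}}$, so that $\delta_{1,\mathbb{H}}(u)=A-B\geq 0$ and $\delta_{2,\mathbb{H}}(u)=A^{2}-B^{2}$. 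The elementary factorization $A^{2}-B^{2}=(A-B)^{2}+2B(A-B)=\delta_{1,\mathbb{H}}^{2}+2B\,\delta_{1,\mathbb{H}}$, combined with the bound $\delta_{1,\mathbb{H}}(u)\geq Kd_{1}^{2}(u,E,M)$ already proved, delivers the quartic-plus-quadratic estimate stated in the theorem. The main obstacle is conceptual rather than computational: one must recognize that the Heisenberg-optimal choice $\lambda_{\ast}$ is exactly what converts the identity form of the deficit into a Dirichlet energy to which Theorem~\ref{1stineq} applies, while the multiplicative substitution $v=ue^{\rho^{2}/(2\lambda_{\ast}^{2})}$ is precisely what turns the weighted Poincar\'{e} oscillation of $v$ around a constant into the weighted $L^{2}$-distance of $u$ from the optimizer family $E$. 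Unboundedness of $e^{\rho^{2}/(2\lambda^{2})}$ on $\mathbb{H}^{N}$ causes no difficulty because we work first with compactly supported $u$ and only then approximate.
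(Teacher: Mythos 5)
Your proposal is correct and follows essentially the same route as the paper: it rewrites $\delta_{1,\mathbb{H}}(u)$ via the identity of Theorem~\ref{M2} at the optimal scale $\lambda_{\ast}$, applies the first weighted Poincar\'{e} inequality (Theorem~\ref{1stineq}) to $v=ue^{\rho^{2}(x)/(2\lambda_{\ast}^{2})}$, and absorbs the Gaussian weight into the $L^{2}$-distance to the optimizer family, with the $\delta_{2,\mathbb{H}}$ estimate following from the factorization $A^{2}-B^{2}=\delta_{1,\mathbb{H}}^{2}+2B\,\delta_{1,\mathbb{H}}$. Your added remarks on enlarging the infimum over $\lambda$ and on the density argument are consistent with (and slightly more explicit than) the paper's presentation.
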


Here
\[
d_{1}(u,E,M):=\inf\left\{  \left(  \int_{\mathbb{H}^{N}}\left\vert
u-ce^{-\alpha\rho^{2}(x)}\right\vert ^{2}MdV_{\mathbb{H}}\right)  ^{1/2}%
:c\in\mathbb{R},\alpha>0\right\}  ,
\]
and the Heisenberg deficits on $\mathbb{H}^{N}$ are defined as follows:
\begin{align*}
\delta_{1,\mathbb{H}}(u):  &  =\left(  \int_{\mathbb{H}^{N}}|\nabla
_{\mathbb{H}}u|^{2}dV_{\mathbb{H}}\right)  ^{1/2}\left(  \int_{\mathbb{H}^{N}%
}\rho^{2}(x)|u|^{2}dV_{\mathbb{H}}\right)  ^{1/2}\\
&  -\dfrac{1}{2}\int_{\mathbb{H}^{N}}\left(  N+(N-1)\dfrac{\rho(x)\cosh
\rho(x)-\ \sinh\rho(x)}{\sinh\rho(x)}\right)  \;|u|^{2}dV_{\mathbb{H}},
\end{align*}
and%
\begin{align*}
\delta_{2,\mathbb{H}}(u):  &  =\left(  \int_{\mathbb{H}^{N}}|\nabla
_{\mathbb{H}}u|^{2}dV_{\mathbb{H}}\right)  \left(  \int_{\mathbb{H}^{N}}%
\rho^{2}(x)|u|^{2}dV_{\mathbb{H}}\right) \\
&  -\dfrac{1}{4}\left(  \int_{\mathbb{H}^{N}}\left[  N+(N-1)\dfrac
{\rho(x)\cosh\rho(x)-\sinh\rho(x)}{\sinh\rho(x)}\right]  |u|^{2}%
dV_{\mathbb{H}}\right)  ^{2}.
\end{align*}
Though Theorem \ref{Cor 3.1} provides some stability results for the
Heisenberg Uncertainty Principle on the hyperbolic space $\mathbb{H}^{N}$, the
main shortcoming of the above result is the existence of the weight function
$\left(  1-\tanh^{2}\left(  \frac{\rho(x)}{2}\right)  \right)  ^{N}.$ In order
to investigate the stability of the Heisenberg Uncertainty Principle on
$\mathbb{H}^{N}$ without this term, we first introduce a function space as
follows: given any $\beta>0$, we define the set
\[
\mathcal{A}_{\beta}:=\left\{  u\in W^{1,2}(\mathbb{H}^{N})\;\text{such
that}\;\int_{\mathbb{H}^{N}}\rho^{2}(x)|u|^{2}dV_{\mathbb{H}}<\infty
\;\text{and}\;\dfrac{\int_{\mathbb{H}^{N}}\rho^{2}(x)|u)|^{2}dV_{\mathbb{H}}%
}{\int_{\mathbb{H}^{N}}|\nabla_{\mathbb{H}}u|^{2}dV_{\mathbb{H}}}\leq\beta
^{4}\;\right\}  .
\]
Then, we will show that

\begin{theorem}
\label{Cor 3.2} Let $N\geq2.$ Given a positive number $\beta$, for any
$u\in\mathcal{A}_{\beta}$,
\begin{align*}
\delta_{1,\mathbb{H}}(u)  &  \geq K\inf_{\lambda\leq\beta,c\in\mathbb{R}}%
\int_{\mathbb{H}^{N}}\left\vert u-ce^{-\frac{\rho^{2}(x)}{2\lambda^{2}}%
}\right\vert ^{2}dV_{\mathbb{H}}\\
&  \geq K\inf_{\lambda\leq\beta,c\in\mathbb{R}}\left\Vert u-ce^{-\frac
{\rho^{2}(x)}{2\lambda^{2}}}\right\Vert _{L^{2}(\mathbb{H}^{N})}^{2},
\end{align*}
where $K$ is a universal constant depending only on $\beta$. Consequently,
\begin{align*}
\delta_{2,\mathbb{H}}(u)  &  \geq K\left(  \int_{\mathbb{H}^{N}}\left[
N+(N-1)\dfrac{\rho(x)\cosh\rho(x)-\sinh\rho(x)}{\sinh\rho(x)}\right]
|u|^{2}dV_{\mathbb{H}}\right)  \inf_{\lambda\leq\beta,c\in\mathbb{R}%
}\left\Vert u-ce^{-\frac{\rho^{2}(x)}{2\lambda^{2}}}\right\Vert _{L^{2}%
(\mathbb{H}^{N})}^{2}\\
&  +K^{2}\inf_{\lambda\leq\beta,c\in\mathbb{R}}\left\Vert u-ce^{-\frac
{\rho^{2}(x)}{2\lambda^{2}}}\right\Vert _{L^{2}(\mathbb{H}^{N})}^{4}.
\end{align*}

\end{theorem}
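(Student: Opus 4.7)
The plan is to combine the Heisenberg identity of Theorem \ref{M2} with the second scale-dependent weighted Poincar\'{e} inequality of Theorem \ref{2ndineq}. The function space $\mathcal{A}_{\beta}$ is designed precisely so that the distinguished scale
\[
\lambda_{\ast} := \left(\frac{\int_{\mathbb{H}^{N}}\rho^{2}(x)|u|^{2}\,dV_{\mathbb{H}}}{\int_{\mathbb{H}^{N}}|\nabla_{\mathbb{H}} u|^{2}\,dV_{\mathbb{H}}}\right)^{1/4}
\]
appearing in Theorem \ref{M2} satisfies $\lambda_{\ast}\leq\beta$, placing it inside the admissible range of Theorem \ref{2ndineq}.

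With this choice, Theorem \ref{M2} rewrites the deficit as
\[
\delta_{1,\mathbb{H}}(u) \;=\; \frac{\lambda_{\ast}^{2}}{2}\int_{\mathbb{H}^{N}} e^{-\rho^{2}(x)/\lambda_{\ast}^{2}}\left|\nabla_{\mathbb{H}}\bigl(u\,e^{\rho^{2}(x)/(2\lambda_{\ast}^{2})}\bigr)\right|^{2} dV_{\mathbb{H}}.
\]
Setting $v:=u\,e^{\rho^{2}(x)/(2\lambda_{\ast}^{2})}$ and applying Theorem \ref{2ndineq} (legitimately, since $\lambda_{\ast}\in(0,\beta]$) yields
\[
\delta_{1,\mathbb{H}}(u)\;\geq\; K(\beta)\inf_{c\in\mathbb{R}}\int_{\mathbb{H}^{N}} |v-c|^{2}\,e^{-\rho^{2}(x)/\lambda_{\ast}^{2}}\,dV_{\mathbb{H}}.
\]
The key algebraic identity $|v-c|^{2}\, e^{-\rho^{2}/\lambda_{\ast}^{2}} = |u-c\,e^{-\rho^{2}/(2\lambda_{\ast}^{2})}|^{2}$ then converts the Gaussian-weighted distance into an unweighted $L^{2}(\mathbb{H}^{N})$ distance from $u$ to the one-parameter family $\{ce^{-\rho^{2}/(2\lambda_{\ast}^{2})}:c\in\mathbb{R}\}\subset E$. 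Since $\lambda_{\ast}\leq\beta$, fixing this particular $\lambda_{\ast}$ and infimizing only in $c$ dominates the joint infimum over $c\in\mathbb{R}$ and $\lambda\leq\beta$, which produces the first claim. The estimate for $\delta_{2,\mathbb{H}}(u)$ follows from the difference-of-squares factorization
\[
\delta_{2,\mathbb{H}}(u) \;=\; \delta_{1,\mathbb{H}}(u)\,\Bigl(\delta_{1,\mathbb{H}}(u)+2C(u)\Bigr),
\]
where $2C(u):=\int_{\mathbb{H}^{N}}\bigl[N+(N-1)\tfrac{\rho(x)\cosh\rho(x)-\sinh\rho(x)}{\sinh\rho(x)}\bigr]|u|^{2}\, dV_{\mathbb{H}}$, obtained by writing $\delta_{2,\mathbb{H}}=(\sqrt{AB}-C)(\sqrt{AB}+C)$ with $A=\int|\nabla_{\mathbb{H}}u|^{2}$, $B=\int\rho^{2}|u|^{2}$, substituting $\sqrt{AB}=\delta_{1,\mathbb{H}}+C$, and inserting the lower bound on $\delta_{1,\mathbb{H}}$.

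The main obstacle I anticipate is technical rather than conceptual: Theorem \ref{2ndineq} is stated for test functions in $C_{0}^{\infty}(\mathbb{H}^{N})$, whereas $v=u\,e^{\rho^{2}/(2\lambda_{\ast}^{2})}$ is only a priori guaranteed to live in a weighted Sobolev space when $u\in\mathcal{A}_{\beta}$. One checks that $\int_{\mathbb{H}^{N}} |v|^{2}e^{-\rho^{2}/\lambda_{\ast}^{2}}dV_{\mathbb{H}}=\int_{\mathbb{H}^{N}}|u|^{2}dV_{\mathbb{H}}$ and, expanding $\nabla_{\mathbb{H}}v$, that $\int_{\mathbb{H}^{N}} |\nabla_{\mathbb{H}}v|^{2}e^{-\rho^{2}/\lambda_{\ast}^{2}}dV_{\mathbb{H}}$ is controlled by $\int_{\mathbb{H}^{N}}(|\nabla_{\mathbb{H}}u|^{2}+\lambda_{\ast}^{-4}\rho^{2}|u|^{2})\,dV_{\mathbb{H}}<\infty$. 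Extending Theorem \ref{2ndineq} to the completion of $C_{0}^{\infty}(\mathbb{H}^{N})$ in this Gaussian-weighted energy then proceeds by a standard truncation-and-mollification argument, which is facilitated by the fast decay of the weight but deserves explicit verification.
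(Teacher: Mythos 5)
Your proposal is correct and follows essentially the same route as the paper: the authors likewise specialize the deficit identity of Theorem \ref{M2} at $\lambda_{\ast}=\bigl(\int\rho^{2}|u|^{2}/\int|\nabla_{\mathbb{H}}u|^{2}\bigr)^{1/4}$, observe that $u\in\mathcal{A}_{\beta}$ forces $\lambda_{\ast}\leq\beta$ so that Theorem \ref{2ndineq} applies, and then absorb the Gaussian weight into the test function to obtain the unweighted $L^{2}$ distance, with the $\delta_{2,\mathbb{H}}$ bound following from the same difference-of-squares factorization. Your remark on extending Theorem \ref{2ndineq} beyond $C_{0}^{\infty}$ test functions is a legitimate technical point that the paper passes over in silence, but it does not alter the argument.
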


Nevertheless, the above stability result relies on a strong assumption
regarding $\lambda$. To address this limitation, we construct a new family of
weight functions $M_{\lambda}$ such that the Poincar\'{e} inequality of type
\eqref{MLP} holds as $\lambda\rightarrow\infty$. Beside, we introduce a new
Heisenberg deficit $\Tilde{\delta}_{1,\mathbb{H}}$ for this case (see
Corollary \ref{Cor 3.3} for details). Combining this result with the earlier
stability findings, we derive the following uniform stability result for the
Heisenberg uncertainty principle in hyperbolic space:

\begin{theorem}
\label{finalstability} For some $\beta>0$ and for any $u$ such that $u\in
W^{1,2}(\mathbb{H}^{N})$ and $\int_{\mathbb{H}^{N}}\rho^{2}(x)|u|^{2}%
dV_{\mathbb{H}}<\infty$, we~have
\begin{align*}
&  \delta_{1,\mathbb{H}}(u)\mathbf{1}_{\mathcal{A}_{\beta}}(u)+\Tilde{\delta
}_{1,\mathbb{H}}(u)\mathbf{1}_{\overline{\mathcal{A}_{\beta}}}(u)\\
&  \geq K\inf_{c\in\mathbb{R}\text{,~}\lambda>0}\int_{\mathbb{H}^{N}%
}\left\vert u-ce^{-\frac{\rho^{2}(x)}{2\lambda^{2}}}\mathbf{1}_{\mathcal{A}%
_{\beta}}(u)-ce^{-\left(  \frac{\rho(x)}{\sinh\rho(x)}\right)  ^{\frac
{(N-1)}{2}}\frac{\rho^{2}(x)}{2\lambda^{2}}}\mathbf{1}_{\overline
{\mathcal{A}_{\beta}}}(u)\right\vert ^{2}dV_{\mathbb{H}}\\
&  =K\inf_{c\in\mathbb{R}\text{,~}\lambda>0}\left\Vert u-\left(
ce^{-\frac{\rho^{2}(x)}{2\lambda^{2}}}\mathbf{1}_{\mathcal{A}_{\beta}%
}(u)+ce^{-\left(  \frac{\rho(x)}{\sinh\rho(x)}\right)  ^{\frac{(N-1)}{2}}%
\frac{\rho^{2}(x)}{2\lambda^{2}}}\mathbf{1}_{\overline{\mathcal{A}_{\beta}}%
}(u)\right)  \right\Vert _{L^{2}(\mathbb{H}^{N})}^{2},
\end{align*}
for all $\lambda\leq\beta$, if $u\in\mathcal{A}_{\beta}$, or for all
$\lambda>\beta$ if $u\in\overline{\mathcal{A}_{\beta}}$, where $\overline
{\mathcal{A}_{\beta}}$ is the complement of the space $\mathcal{A}_{\beta}$.
Here $K=K(\beta)$ is a positive constant independent of $\lambda$ and $u$.
\end{theorem}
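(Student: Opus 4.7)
The strategy is to reduce the stability estimate to a pair of scale-dependent weighted Poincar\'{e} inequalities, with the case split being engineered so that the extremizing $\lambda$ from the Heisenberg identity (Theorem~\ref{M2}) automatically falls in the range where the corresponding Poincar\'{e} inequality is valid. Concretely, recall that Theorem~\ref{M2} rewrites the deficit as
\[
\delta_{1,\mathbb{H}}(u)=\dfrac{\lambda^{2}}{2}\int_{\mathbb{H}^{N}}e^{-\rho^{2}(x)/\lambda^{2}}\bigl|\nabla_{\mathbb{H}}\bigl(ue^{\rho^{2}(x)/(2\lambda^{2})}\bigr)\bigr|^{2}dV_{\mathbb{H}},
\]
with the natural choice $\lambda^{\ast}=\bigl(\int\rho^{2}|u|^{2}/\int|\nabla_{\mathbb{H}}u|^{2}\bigr)^{1/4}$. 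The definition of $\mathcal{A}_{\beta}$ is tailored so that $u\in\mathcal{A}_{\beta}$ is equivalent to $\lambda^{\ast}\leq\beta$, while $u\in\overline{\mathcal{A}_{\beta}}$ corresponds to $\lambda^{\ast}>\beta$. This will be the key dichotomy.

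\textbf{Case 1: $u\in\mathcal{A}_{\beta}$.} Here I would choose $\lambda=\lambda^{\ast}\leq\beta$ in the identity above and then apply Theorem~\ref{2ndineq} to the function $v:=ue^{\rho^{2}(x)/(2\lambda^{2})}$; since $\lambda\leq\beta$, the hypotheses of Theorem~\ref{2ndineq} are satisfied, yielding a constant $K(\beta)$ independent of $\lambda$ and $u$. The right-hand side of \eqref{2ndcaseineq} becomes
\[
K(\beta)\inf_{c\in\mathbb{R}}\int_{\mathbb{H}^{N}}\bigl|ue^{\rho^{2}(x)/(2\lambda^{2})}-c\bigr|^{2}e^{-\rho^{2}(x)/\lambda^{2}}dV_{\mathbb{H}}=K(\beta)\inf_{c\in\mathbb{R}}\int_{\mathbb{H}^{N}}\bigl|u-ce^{-\rho^{2}(x)/(2\lambda^{2})}\bigr|^{2}dV_{\mathbb{H}},
\]
after multiplying $c$ inside the infimum. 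This is precisely the $L^{2}$-distance on $\mathcal{A}_{\beta}$-part of the final bound, and taking a further infimum over $\lambda\leq\beta$ only decreases it.

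\textbf{Case 2: $u\in\overline{\mathcal{A}_{\beta}}$.} Here $\lambda^{\ast}>\beta$, so I cannot use Theorem~\ref{2ndineq}. Instead I would use the companion Heisenberg-type identity (the one encoded in $\Tilde{\delta}_{1,\mathbb{H}}$ from Corollary~\ref{Cor 3.3}), which replaces the Gaussian weight $e^{-\rho^{2}/\lambda^{2}}$ by the modified weight $e^{-(\rho/\sinh\rho)^{(N-1)/2}\rho^{2}/\lambda^{2}}$, and rewrites $\Tilde{\delta}_{1,\mathbb{H}}(u)$ as $\tfrac{\lambda^{2}}{2}\int e^{-(\rho/\sinh\rho)^{(N-1)/2}\rho^{2}/\lambda^{2}}|\nabla_{\mathbb{H}}(ue^{(\rho/\sinh\rho)^{(N-1)/2}\rho^{2}/(2\lambda^{2})})|^{2}dV_{\mathbb{H}}$ at the optimal $\lambda$. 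Choosing $\lambda>\beta$ large enough and applying the third weighted Poincar\'{e} inequality (Theorem~\ref{3rdineq}) to this rewritten form, the same algebraic manipulation (pulling the exponential weight inside the squared difference) yields exactly the $\overline{\mathcal{A}_{\beta}}$-part of the claimed infimum.

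\textbf{Assembling the result.} Combining both cases via the indicator functions $\mathbf{1}_{\mathcal{A}_{\beta}}$ and $\mathbf{1}_{\overline{\mathcal{A}_{\beta}}}$ (which partition the admissible function space), I obtain the uniform lower bound stated. Since the $L^{2}$-distances from both cases appear with the same structural form, the infimum on the right-hand side of the theorem collapses to a single expression, and the final constant is $K=\min\{K_{1}(\beta),K_{2}(\beta)\}$ where $K_{1}$ comes from Theorem~\ref{2ndineq} and $K_{2}$ from Theorem~\ref{3rdineq}. The main technical obstacle is purely bookkeeping: ensuring that the $\lambda$-ranges of validity in Theorems~\ref{2ndineq} and~\ref{3rdineq} glue seamlessly at $\lambda=\beta$, so that for every $u$ the optimal $\lambda^{\ast}$ lies in at least one admissible range; this is exactly what the definition of $\mathcal{A}_{\beta}$ guarantees, and the threshold $\beta$ in Theorem~\ref{3rdineq} (``$\beta$ large enough'') can be chosen once and for all to match.
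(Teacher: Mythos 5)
Your proposal is correct and follows essentially the same route as the paper: the paper's proof of Theorem \ref{finalstability} is literally the one-line combination of Theorem \ref{Cor 3.2} (your Case 1: the identity of Theorem \ref{M2} at $\lambda^{\ast}\leq\beta$ plus Theorem \ref{2ndineq}) with Corollary \ref{Cor 3.3} (your Case 2: the modified deficit $\Tilde{\delta}_{1,\mathbb{H}}$ at $\lambda^{\ast}>\beta$ plus Theorem \ref{3rdineq}), with $\beta$ fixed as in Theorem \ref{3rdineq}. Your observation that membership in $\mathcal{A}_{\beta}$ is exactly the condition $\lambda^{\ast}\leq\beta$ is the same dichotomy the paper uses, so no further comparison is needed.
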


\medskip

The last primary intention of our article is to investigate the logarithmic
Sobolev inequality with Gaussian measure. We note that the Poincar\'{e}
inequalities and the logarithmic Sobolev inequalities are among the most
studied functional inequalities for semigroups. Moreover, the logarithmic
Sobolev inequalities encompass much more information than the Poincar\'{e}
inequalities. In the Euclidean setting, the logarithmic Sobolev inequalities
have been well-studied and well-understood. There is a vast literature on the
logarithmic Sobolev inequalities in different settings and their applications
in several fields of mathematics and physics. Here we refer the interested
reader to the seminal papers of L. Gross \cite{Gro75, Gro75b} and the
excellent book of Bakry-Gentil-Ledoux \cite{BGL14} and references therein.

In a very recent paper, the authors in \cite{GJK25} proved the following
inequality: Let $N\geq2$. There exist constants $C_{1}$ and $C_{2}$ such that
for every $\epsilon>0$ we have%
\begin{equation}
\int_{\mathbb{H}^{N}}\left\vert u\right\vert ^{2}\log\left\vert u\right\vert
^{2}dV_{\mathbb{H}}\leq\frac{\epsilon}{\pi}\int_{\mathbb{H}^{N}}\left\vert
\nabla_{\mathbb{H}}u\right\vert ^{2}dV_{\mathbb{H}}+\int_{\mathbb{H}^{N}%
}\left\vert u\right\vert ^{2}dV_{\mathbb{H}}\left[  \log\left(  \int
_{\mathbb{H}^{N}}\left\vert u\right\vert ^{2}dV_{\mathbb{H}}\right)
+C_{1}-C_{2}\log\epsilon\right]  . \label{wLS}%
\end{equation}
(\ref{wLS}) in the setting of the Euclidean space can be found in \cite{LL}.
Moreover, it was shown in \cite{LL} that in the Euclidean setting, (\ref{wLS})
is equivalent to the logarithmic Sobolev inequality with Gaussian measure via
a simple change of variable:%
\[
\int_{\mathbb{R}^{N}}\left\vert u\right\vert ^{2}\log\left(  \frac{\left\vert
u\right\vert ^{2}}{\int_{\mathbb{R}^{N}}\left\vert u\right\vert ^{2}%
e^{-\pi\left\vert x\right\vert ^{2}}dx}\right)  e^{-\pi\left\vert x\right\vert
^{2}}dx\lesssim\int_{\mathbb{R}^{N}}\left\vert \nabla_{\mathbb{H}}u\right\vert
^{2}e^{-\pi\left\vert x\right\vert ^{2}}dx.
\]
Nevertheless, due to the negativity of the sectional curvature on the
hyperbolic space $\mathbb{H}^{N}$, (\ref{wLS}) is significantly weaker than
the following logarithmic Sobolev inequality with Gaussian measure, if it
existed:%
\begin{equation}
\int_{\mathbb{H}^{N}}\left\vert u\right\vert ^{2}\log\left(  \frac{\left\vert
u\right\vert ^{2}}{\int_{\mathbb{H}^{N}}\left\vert u\right\vert ^{2}%
e^{-\pi\rho^{2}(x)}dV_{\mathbb{H}}}\right)  e^{-\pi\rho^{2}(x)}dV_{\mathbb{H}%
}\lesssim\int_{\mathbb{H}^{N}}\left\vert \nabla_{\mathbb{H}}u\right\vert
^{2}e^{-\pi\rho^{2}(x)}dV_{\mathbb{H}}. \label{LS}%
\end{equation}
In particular, (\ref{LS}), if it existed, implies (\ref{wLS}) but the reverse
direction does not hold true.

To the best of our knowledge, in the setting of the hyperbolic space, a
logarithmic Sobolev inequality like (\ref{LS}) is still missing in the
literature. Therefore, our final goal of this paper is to investigate the
logarithmic Sobolev inequalities with Gaussian measures. More precisely, let
$\beta>0$,\ $G=\int_{\mathbb{H}^{N}}e^{-\beta\rho^{2}(x)}dV_{\mathbb{H}}$ be
the normalization constant and $d\mu_{\mathbb{H}}:=\frac{e^{-\beta\rho^{2}%
(x)}}{G}dV_{\mathbb{H}}$ be a probability measure. Then, by using the
$U$-bounds approach (see \cite{HZ10}, for instance), we have the following
logarithmic Sobolev inequality that seems new in the literature:

\begin{theorem}
\label{ThmlogS}There exists a universal constant $LS\left(  \beta\right)  >0$
such that for all $u$ such that $\int_{\mathbb{H}^{N}}\left\vert
\nabla_{\mathbb{H}}u\right\vert ^{2}d\mu_{\mathbb{H}}+\int_{\mathbb{H}^{N}%
}\left\vert u\right\vert ^{2}d\mu_{\mathbb{H}}<\infty:$
\[
\int_{\mathbb{H}^{N}}\left\vert u\right\vert ^{2}\log\left(  \frac{\left\vert
u\right\vert ^{2}}{\int_{\mathbb{H}^{N}}\left\vert u\right\vert ^{2}%
d\mu_{\mathbb{H}}}\right)  d\mu_{\mathbb{H}}\leq LS\left(  \beta\right)
\int_{\mathbb{H}^{N}}\left\vert \nabla_{\mathbb{H}}u\right\vert ^{2}%
d\mu_{\mathbb{H}}.
\]

\end{theorem}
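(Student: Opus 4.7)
My approach will follow the $U$-bounds strategy of Hebisch--Zegarlinski~\cite{HZ10}: derive a tight log-Sobolev inequality for the Gibbs measure $d\mu_{\mathbb{H}}=e^{-V}\,dV_{\mathbb{H}}/G$ (with $V=\beta\rho^{2}$) from three ingredients~-- the defective log-Sobolev (\ref{wLS}) already available on $\hn$, a moment (``$U$-'') bound for the potential $V$, and a weighted Poincar\'e inequality from Theorem~\ref{2ndineq} used to tighten the defect. Since on $\hn$ one has $|\nabla_{\mathbb{H}}\rho|=1$ and $\Delta_{\mathbb{H}}\rho=(N-1)\coth\rho$, the potential satisfies $|\nabla_{\mathbb{H}}V|^{2}=4\beta^{2}\rho^{2}$ and $\Delta_{\mathbb{H}}V=2\beta(1+(N-1)\rho\coth\rho)$.

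\textbf{Step 1 ($U$-bound).} I first prove
\[
\int_{\hn}u^{2}\rho^{2}\,d\mu_{\mathbb{H}}\;\le\;c_{1}(\beta)\int_{\hn}|\nabla_{\mathbb{H}}u|^{2}\,d\mu_{\mathbb{H}}+c_{2}(\beta)\int_{\hn}u^{2}\,d\mu_{\mathbb{H}}.
\]
Testing $u^{2}\nabla_{\mathbb{H}}V$ against $d\mu_{\mathbb{H}}$ and integrating by parts gives the standard identity
\[
\int u^{2}|\nabla_{\mathbb{H}}V|^{2}\,d\mu_{\mathbb{H}}=\int u^{2}\,\Delta_{\mathbb{H}}V\,d\mu_{\mathbb{H}}+2\int u\,\nabla_{\mathbb{H}}u\cdot\nabla_{\mathbb{H}}V\,d\mu_{\mathbb{H}}.
\]
A Cauchy--Schwarz on the cross term together with the observation that the linear growth of $\Delta_{\mathbb{H}}V$ in $\rho$ can be dominated by a small fraction of $|\nabla_{\mathbb{H}}V|^{2}\sim\rho^{2}$ plus a constant yields the $U$-bound after absorbing.

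\textbf{Step 2 (defective LSI).} Next I apply (\ref{wLS}) to $v:=u\,e^{-\beta\rho^{2}/2}/\sqrt{G}$, so that $v^{2}\,dV_{\mathbb{H}}=u^{2}\,d\mu_{\mathbb{H}}$. A direct calculation of $\int v^{2}\log v^{2}\,dV_{\mathbb{H}}$ and an integration by parts of the cross term in $\int|\nabla_{\mathbb{H}}v|^{2}\,dV_{\mathbb{H}}$ produce
\begin{align*}
\int v^{2}\log v^{2}\,dV_{\mathbb{H}}&=\int u^{2}\log u^{2}\,d\mu_{\mathbb{H}}-\beta\int u^{2}\rho^{2}\,d\mu_{\mathbb{H}}-(\log G)\int u^{2}\,d\mu_{\mathbb{H}},\\
\int|\nabla_{\mathbb{H}}v|^{2}\,dV_{\mathbb{H}}&=\int|\nabla_{\mathbb{H}}u|^{2}\,d\mu_{\mathbb{H}}+\beta\int u^{2}\bigl(1+(N-1)\rho\coth\rho\bigr)d\mu_{\mathbb{H}}-\beta^{2}\int u^{2}\rho^{2}\,d\mu_{\mathbb{H}}.
\end{align*}
Substituting into (\ref{wLS}) with $\epsilon=1$ and absorbing the surviving $\rho^{2}$- and $\rho\coth\rho$-terms by the $U$-bound (using $\rho\coth\rho\le C(1+\rho^{2})$) gives a defective log-Sobolev
\[
\int u^{2}\log\!\frac{u^{2}}{\int u^{2}\,d\mu_{\mathbb{H}}}\,d\mu_{\mathbb{H}}\;\le\;A(\beta)\int|\nabla_{\mathbb{H}}u|^{2}\,d\mu_{\mathbb{H}}+B(\beta)\int u^{2}\,d\mu_{\mathbb{H}}.
\]

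\textbf{Step 3 (tightening).} Finally, Rothaus's lemma with $\bar u=\int u\,d\mu_{\mathbb{H}}$ allows me to replace $u$ by $u-\bar u$ in the defective inequality at the cost of a term $(B(\beta)+2)\int(u-\bar u)^{2}\,d\mu_{\mathbb{H}}$, which is controlled by the weighted Poincar\'e inequality of Theorem~\ref{2ndineq} (applied with scale parameter $\lambda=1/\sqrt{\beta}$ and its upper bound set to this value), yielding the tight constant $LS(\beta)=A(\beta)+(B(\beta)+2)C_{P}(\beta)$. The main obstacle is Step~1: the asymmetry $\Delta_{\mathbb{H}}V\sim\rho$ versus $|\nabla_{\mathbb{H}}V|^{2}\sim\rho^{2}$ requires a careful two-parameter absorption (one for the cross term and one for the Laplacian term), and all resulting constants degenerate as $\beta\to 0$, consistent with the $\beta$-dependence of $LS(\beta)$ in the statement.
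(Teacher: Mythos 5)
Your proof is correct and follows the same overall $U$-bounds architecture as the paper: a moment bound for $\rho^{2}$ against the Gaussian measure, a defective log-Sobolev inequality obtained by transporting an unweighted entropy--energy inequality on $\mathbb{H}^{N}$ through the change of density $v=u e^{-\beta\rho^{2}/2}/\sqrt{G}$, and a final tightening via the Rothaus/Bobkov--Zegarlinski lemma combined with the Gaussian Poincar\'{e} inequality (the paper uses Corollary \ref{weightedPoinIneq}, which is exactly your instance of Theorem \ref{2ndineq} at $\lambda=1/\sqrt{\beta}$). The two localized differences are: (i) for the $U$-bound, you run the quadratic-form identity $\int u^{2}|\nabla_{\mathbb{H}}V|^{2}d\mu_{\mathbb{H}}=\int u^{2}\Delta_{\mathbb{H}}V\,d\mu_{\mathbb{H}}+2\int u\,\nabla_{\mathbb{H}}u\cdot\nabla_{\mathbb{H}}V\,d\mu_{\mathbb{H}}$ directly in $L^{2}$ and absorb, whereas the paper first proves an $L^{1}$ bound $\int|f|\rho\,d\mu_{\mathbb{H}}\lesssim\int|\nabla_{\mathbb{H}}f|d\mu_{\mathbb{H}}+\int|f|d\mu_{\mathbb{H}}$ (Lemma \ref{l5.1}) and then bootstraps with $f=|u|^{2}\rho_{1}$ (Lemma \ref{l5.2}); your one-step argument is shorter, while the paper's $L^{1}$ lemma is reusable for $L^{p}$ variants. (ii) For the defective inequality you invoke \eqref{wLS} from \cite{GJK25} as a black box (at $\epsilon=1$), while the paper rederives the needed unweighted bound $\int v^{2}\log v^{2}\,dV_{\mathbb{H}}\leq C_{1}+C_{2}\int|\nabla_{\mathbb{H}}v|^{2}\,dV_{\mathbb{H}}$ for normalized $v$ from Jensen's inequality and the hyperbolic (Poincar\'{e}--)Sobolev inequality — these are equivalent in substance, and your computations of $\int v^{2}\log v^{2}\,dV_{\mathbb{H}}$ and $\int|\nabla_{\mathbb{H}}v|^{2}\,dV_{\mathbb{H}}$ check out against the paper's.
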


\medskip

We conclude this introduction with an outline of the article. Our paper is
organized as follows: In Section \ref{sct4}, we set up an abstract identity
and use it to establish $L^{2}$-Hardy type and $L^{2}$%
-Caffarelli-Kohn-Nirenberg type identities on $\mathbb{H}^{N}$. This section
also develops some related inequalities and an Heisenberg uncertainty
principle identity, which is a subclass of Caffarelli-Kohn-Nirenberg. In the
next section, we construct weighted Poincar\'{e} inequalities with weight
functions depending on a scaling parameter. Additionally, a version of the
weighted Poincar\'{e} inequality independent of the scaling parameter is given
as a direct corollary. Section \ref{sct5} presents several $L^{2}$-stability
results for the Heisenberg uncertainty principle, both scale non-invariant and
scale invariant, with various Heisenberg deficits. Finally, in Section
\ref{sct6}, we prove several $U$-bound estimates and use them to establish the
logarithmic Sobolev inequality with Gaussian measures.


\section{An abstract identity on the hyperbolic space $\mathbb{H}^{N}$-Proofs
of Theorems \ref{Thm 1.1}, \ref{M1} and \ref{M2}}

\label{sct4}In this section, we would like to construct an abstract functional
identity involving a vector field on the hyperbolic space that we will use
several times later to derive some stability results. More precisely, we will
provide a proof for Theorem \ref{Thm 1.1} and establish the following
identity:
\begin{align}
&  \lambda^{p}\int_{\mathbb{H}^{N}}A|\nabla_{\mathbb{H}}u|^{p}dV_{\mathbb{H}%
}+\dfrac{p-1}{\lambda^{p/(p-1)}}\int_{\mathbb{H}^{N}}A|uX|^{p}dV_{\mathbb{H}%
}\nonumber\\
&  =-\int_{\mathbb{H}^{N}}\operatorname{div}\left(  A|X|^{p-2}X\right)
|u|^{p}dV_{\mathbb{H}}+\int_{\mathbb{H}^{N}}A\mathcal{R}_{p}\left(  \dfrac
{uX}{\lambda^{1/(p-1)}},\lambda\nabla_{\mathbb{H}}u\right)  dV_{\mathbb{H}}.
\label{gnridt}%
\end{align}
Here $\mathcal{R}_{p}\left(  X,Y\right)  :=|Y|^{p}+(p-1)|X|^{p}-p|X|^{p-2}%
\left\langle X,Y\right\rangle $.

Throughout the section, unless we emphasise the use of a specific model of
$\mathbb{H}^{N}$, we will work with the Poincar\'{e} ball model of the
hyperbolic space $\mathbb{H}^{N}$. Also, $\left\langle \cdot,\cdot
\right\rangle _{\mathbb{R}^{N}}$ denotes the standard inner product on
$\mathbb{R}^{N}$.

\begin{proof}
[Proof of Theorem \ref{Thm 1.1}]The proof follows from a few direct
computations. Indeed, with $X$ be a vector field on $\mathbb{H}^{N}$, $A\in
C^{1}(\mathbb{H}^{N}\setminus\left\{  0\right\}  )$ and $u\in C_{0}%
^{1}(\mathbb{H}^{N}\setminus\left\{  0\right\}  )$, we get, from the
divergence theorem
\[
-\int_{\mathbb{H}^{N}}\operatorname{div}(A|X|^{p-2}X)|u|^{p}dV_{\mathbb{H}%
}=\int_{\mathbb{H}^{N}}\left\langle \nabla_{\mathbb{H}}|u|^{p},A|X|^{p-2}%
X\right\rangle dV_{\mathbb{H}}.
\]
Since
\[
\nabla_{\mathbb{H}}|u|^{p}=\dfrac{(1-|x|^{2})^{2}}{4}p|u|^{p-2}u\left(
\sum_{i=1}^{N}\dfrac{\partial u}{\partial x_{i}}\partial_{x_{i}}\right)  .
\]
Letting $X=\sum_{i=1}^{N}X_{i}\partial_{x_{i}}$, we have
\begin{align*}
\left\langle \nabla_{\mathbb{H}}|u|^{p},A|X|^{p-2}X\right\rangle  &
=\sum_{i=1}^{N}p|u|^{p-2}u\dfrac{\partial u}{\partial x_{i}}\cdot
A|X|^{p-2}X_{i}\\
&  =p|u|^{p-2}uA|X|^{p-2}\sum_{i=1}^{N}\dfrac{\partial u}{\partial x_{i}}\cdot
X_{i}\\
&  =p|u|^{p-2}uA|X|^{p-2}\left\langle \nabla_{\mathbb{H}}u,X\right\rangle .
\end{align*}
Hence,
\begin{align*}
-\int_{\mathbb{H}^{N}}\operatorname{div}(A|X|^{p-2}X)|u|^{p}dV_{\mathbb{H}}
&  =\int_{\mathbb{H}^{N}}\left\langle \nabla_{\mathbb{H}}|u|^{p}%
,A|X|^{p-2}X\right\rangle dV_{\mathbb{H}}\\
&  =p\int_{\mathbb{H}^{N}}\dfrac{1}{\lambda^{(p-2)/(p-1)}}A|uX|^{p-2}%
\left\langle \dfrac{uX}{\lambda^{1/(p-1)}},\lambda\nabla_{\mathbb{H}%
}u\right\rangle dV_{\mathbb{H}}\\
&  =\lambda^{p}\int_{\mathbb{H}^{N}}A|\nabla_{\mathbb{H}}u|^{p}dV_{\mathbb{H}%
}+\dfrac{p-1}{\lambda^{p/(p-1)}}\int_{\mathbb{H}^{N}}A|uX|^{p}dV_{\mathbb{H}%
}\\
&  -\int_{\mathbb{H}^{N}}A\mathcal{R}_{p}\left(  \dfrac{uX}{\lambda^{1/(p-1)}%
},\lambda\nabla_{\mathbb{H}}u\right)  dV_{\mathbb{H}},
\end{align*}
where $\mathcal{R}_{p}\left(  X,Y\right)  :=|Y|^{p}+(p-1)|X|^{p}%
-p|X|^{p-2}\left\langle X,Y\right\rangle $, as desired.
\end{proof}

\begin{remark}
From \cite{CKLL24, DLL22}, for instance, it is straightforward to see that
$\mathcal{R}_{p}\left(  X,Y\right)  \geq0$, and the equality holds iff $X=Y$.
Indeed, we have
\begin{align*}
\mathcal{R}_{p}\left(  X,Y\right)   &  =|Y|^{p}+(p-1)|X|^{p}-p|X|^{p-2}%
\left\langle X,Y\right\rangle \\
&  =\left\langle Y,Y\right\rangle ^{p/2}+(p-1)\left\langle X,X\right\rangle
^{p/2}-p\left\langle X,X\right\rangle ^{(p-2)/2}\left\langle X,Y\right\rangle
\\
&  =\left(  \dfrac{2}{1-|x|^{2}}\right)  ^{p}\cdot\left(  \left\langle
Y,Y\right\rangle _{\mathbb{R}^{N}}^{p/2}+(p-1)\left\langle X,X\right\rangle
_{\mathbb{R}^{N}}^{p/2}-p\left\langle X,X\right\rangle _{\mathbb{R}^{N}%
}^{(p-2)/2}\left\langle X,Y\right\rangle _{\mathbb{R}^{N}}\right) \\
&  =\left(  \dfrac{2}{1-|x|^{2}}\right)  ^{p}\cdot\left(  |Y|_{\mathbb{R}^{N}%
}^{p}+(p-1)|X|_{\mathbb{R}^{N}}^{p}-p|X|_{\mathbb{R}^{N}}^{p-2}\left\langle
X,Y\right\rangle _{\mathbb{R}^{N}}\right)  \geq0.
\end{align*}

\end{remark}

\subsection{The first application}

\label{subsct4.1} As a direct consequence, the above theorem is a
generalization of \cite[Theorem $3.2$]{FLLM23}:

\begin{theorem}
\label{Cor 1.1}{(\cite[Theorem $3.2$]{FLLM23})} Let $0<R\leq\infty$, $V\geq0$
and $W$ are $C^{1}$-functions on $(0,R)$. Assume that $(r^{N-1}V,r^{N-1}W)$ be
a Bessel pair on $(0,R)$, i.e., there exists $\varphi>0$ on $(0,R)$
satisfying
\[
(r^{N-1}V\varphi^{\prime})^{\prime}+r^{N-1}W\varphi=0.
\]
Then, we have the following identity for $u\in C_{0}^{\infty}(B_{R}%
\setminus\left\{  0\right\}  ):$
\begin{align*}
&  \int_{B_{R}}V(\rho(x))|\nabla_{\mathbb{H}}u|^{2}dV_{\mathbb{H}}-\int
_{B_{R}}W(\rho(x))|u|^{2}dV_{\mathbb{H}}\\
&  =\int_{B_{R}}V(\rho(x))|\varphi^{2}(\rho(x))|\left\vert \nabla_{\mathbb{H}%
}\left(  \dfrac{u}{\varphi(\rho(x))}\right)  \right\vert ^{2}dV_{\mathbb{H}}\\
&  -(N-1)\int_{B_{R}}V(\rho(x))\dfrac{\varphi^{\prime}(\rho(x))}{\varphi
(\rho(x))}\dfrac{\rho(x)\cosh{\rho(x)}-\sinh{\rho(x)}}{\rho(x)\sinh{\rho(x)}%
}|u|^{2}dV_{\mathbb{H}},
\end{align*}
where $B_{R}:=\{x\in\mathbb{H}^{N}:\rho(x)<R\}$.
\end{theorem}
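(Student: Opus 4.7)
The plan is to derive Theorem \ref{Cor 1.1} as a direct specialization of the abstract identity \eqref{gnridt}. Take $p=2$, $\lambda = 1$, the weight $A=V(\rho(x))$, and the radial vector field $X=\dfrac{\varphi'(\rho(x))}{\varphi(\rho(x))}\nabla_{\mathbb{H}}\rho$. Since $|\nabla_{\mathbb{H}}\rho|=1$ (the Eikonal property), we immediately get $|X|^{2}=(\varphi'/\varphi)^{2}$ and $|uX|^{2}=(\varphi'/\varphi)^{2}u^{2}$. For $p=2$ the remainder simplifies to a perfect square, $\mathcal{R}_{2}(uX,\nabla_{\mathbb{H}}u)=|\nabla_{\mathbb{H}}u - u(\varphi'/\varphi)\nabla_{\mathbb{H}}\rho|^{2}$, which by the chain rule is exactly $\varphi^{2}(\rho(x))\bigl|\nabla_{\mathbb{H}}(u/\varphi(\rho(x)))\bigr|^{2}$. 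This identifies the first remainder term on the right-hand side of Theorem \ref{Cor 1.1}.

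Next I would compute the divergence term $\operatorname{div}(A|X|^{p-2}X)=\operatorname{div}\bigl(V(\rho)(\varphi'/\varphi)\nabla_{\mathbb{H}}\rho\bigr)$. Because $V(\varphi'/\varphi)$ is a function of $\rho$ alone and $|\nabla_{\mathbb{H}}\rho|=1$, the product rule gives
\begin{equation*}
\operatorname{div}\!\Bigl(V\tfrac{\varphi'}{\varphi}\nabla_{\mathbb{H}}\rho\Bigr)=\Bigl(V\tfrac{\varphi'}{\varphi}\Bigr)'+V\tfrac{\varphi'}{\varphi}\,\Delta_{\mathbb{H}}\rho,
\end{equation*}
where prime denotes the $\rho$-derivative, and on $\mathbb{H}^{N}$ one has the standard formula $\Delta_{\mathbb{H}}\rho=(N-1)\coth\rho$.

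The key algebraic step is to convert this expression into $-W(\rho)$ plus a correction, using the Bessel ODE $(r^{N-1}V\varphi')'+r^{N-1}W\varphi=0$. Expanding the derivative and dividing through by $r^{N-1}\varphi$ yields $-W=\tfrac{N-1}{r}V\tfrac{\varphi'}{\varphi}+\tfrac{(V\varphi')'}{\varphi}$, and the identity $(V\varphi'/\varphi)'=(V\varphi')'/\varphi-V(\varphi'/\varphi)^{2}$ lets me solve for $(V\varphi'/\varphi)'$. Substituting and combining with the $(N-1)V(\varphi'/\varphi)\coth\rho$ term produces
\begin{equation*}
\operatorname{div}\!\Bigl(V\tfrac{\varphi'}{\varphi}\nabla_{\mathbb{H}}\rho\Bigr)=-W-V\Bigl(\tfrac{\varphi'}{\varphi}\Bigr)^{2}+(N-1)V\tfrac{\varphi'}{\varphi}\Bigl(\coth\rho-\tfrac{1}{\rho}\Bigr),
\end{equation*}
and the identity $\coth\rho-1/\rho=(\rho\cosh\rho-\sinh\rho)/(\rho\sinh\rho)$ converts the last factor into the geometric correction appearing in Theorem \ref{Cor 1.1}.

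Plugging these computations into \eqref{gnridt} with $p=2,\lambda=1$, the terms $\int V(\varphi'/\varphi)^{2}|u|^{2}dV_{\mathbb{H}}$ on both sides cancel, leaving precisely the desired identity. I expect the only delicate step to be the bookkeeping in the middle paragraph: namely, converting between the Euclidean-style Bessel ODE (with its $r^{N-1}$ factors) and the hyperbolic divergence, since the mismatch between the Euclidean $(N-1)/r$ term and the hyperbolic $(N-1)\coth\rho$ term is exactly what produces the curvature correction. Everything else is a routine application of the chain rule, the Eikonal property $|\nabla_{\mathbb{H}}\rho|=1$, and the formula $\Delta_{\mathbb{H}}\rho=(N-1)\coth\rho$.
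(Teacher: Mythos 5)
Your proposal is correct and follows essentially the same route as the paper: specialize \eqref{gnridt} with $p=2$, $\lambda=1$, $A=V(\rho(x))$, $X=\frac{\varphi'(\rho)}{\varphi(\rho)}\nabla_{\mathbb{H}}\rho$, compute $\operatorname{div}(AX)$ via $\Delta_{\mathbb{H}}\rho=(N-1)\coth\rho$ and the Bessel ODE to produce $-W-V(\varphi'/\varphi)^{2}+(N-1)V\frac{\varphi'}{\varphi}(\coth\rho-\frac1\rho)$, and identify $\mathcal{R}_{2}$ with $V\varphi^{2}|\nabla_{\mathbb{H}}(u/\varphi)|^{2}$ before cancelling the $\int V(\varphi'/\varphi)^{2}|u|^{2}$ terms. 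This is exactly the paper's argument.
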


\begin{proof}
By utilizing $\lambda=1$, $p=2$, $A=V(\rho(x))$, and $X=\dfrac{\varphi
^{\prime}(\rho(x))}{\varphi(\rho(x))}\nabla_{\mathbb{H}}(\rho(x))$, we have
\begin{align}
\operatorname{div}(AX)  &  =\operatorname{div}\left(  V(\rho(x))\dfrac
{\varphi^{\prime}(\rho(x))}{\varphi(\rho(x))}\nabla_{\mathbb{H}}%
(\rho(x))\right) \nonumber\label{eq 1.7}\\
&  =\nabla_{\mathbb{H}}\left(  V(\rho(x))\dfrac{\varphi^{\prime}(\rho
(x))}{\varphi(\rho(x))}\right)  \cdot\nabla_{\mathbb{H}}(\rho(x))+V(\rho
(x))\dfrac{\varphi^{\prime}(\rho(x))}{\varphi(\rho(x))}\text{div}\left(
\nabla_{\mathbb{H}}(\rho(x))\right) \nonumber\\
&  =V^{\prime}(\rho(x))\dfrac{\varphi^{\prime}(\rho(x))}{\varphi(\rho
(x))}+V(\rho(x))\dfrac{\varphi^{\prime\prime}(\rho(x))\varphi(\rho(x))-\left(
\varphi^{\prime}(\rho(x))\right)  ^{2}}{\varphi^{2}(\rho(x))}\nonumber\\
&  +(N-1)V(\rho(x))\dfrac{\varphi^{\prime}(\rho(x))}{\varphi(\rho(x))}%
\dfrac{\cosh(\rho(x))}{\sinh(\rho(x))}\nonumber\\
&  =\dfrac{1}{\varphi(\rho(x))}\left(  V^{\prime}(\rho(x))\varphi^{\prime
}(\rho(x))+V(\rho(x))\varphi^{\prime\prime}(\rho(x))+(N-1)V(\rho
(x))\dfrac{\varphi^{\prime}(\rho(x))}{\rho(x)}\right) \nonumber\\
&  -V(\rho(x))\dfrac{\left(  \varphi^{\prime}(\rho(x))\right)  ^{2}}%
{\varphi^{2}(\rho(x))}\nonumber\\
&  +(N-1)V(\rho(x))\dfrac{\varphi^{\prime}(\rho(x))}{\varphi(\rho(x))}\left(
\dfrac{\cosh(\rho(x))}{\sinh(\rho(x))}-\dfrac{1}{\rho(x)}\right) \nonumber\\
&  =-W(\rho(x))-V(\rho(x))\dfrac{\left(  \varphi^{\prime}(\rho(x))\right)
^{2}}{\varphi^{2}(\rho(x))}+(N-1)V(\rho(x))\dfrac{\varphi^{\prime}(\rho
(x))}{\varphi(\rho(x))}\left(  \dfrac{\cosh(\rho(x))}{\sinh(\rho(x))}%
-\dfrac{1}{\rho(x)}\right)  .
\end{align}
Therefore, from Theorem~\ref{Thm 1.1}, we have
\begin{align*}
&  \int_{B_{R}}V(\rho(x))|\nabla_{\mathbb{H}}u|^{2}dV_{\mathbb{H}}+\int
_{B_{R}}V(\rho(x))\dfrac{\left(  \varphi^{\prime}(\rho(x))\right)  ^{2}%
}{\varphi^{2}(\rho(x))}\left\vert u\right\vert ^{2}dV_{\mathbb{H}}\\
&  =\int_{B_{R}}W(\rho(x))|u|^{2}dV_{\mathbb{H}}+\int_{B_{R}}V(\rho
(x))\dfrac{\left(  \varphi^{\prime}(\rho(x))\right)  ^{2}}{\varphi^{2}%
(\rho(x))}\left\vert u\right\vert ^{2}dV_{\mathbb{H}}\\
&  -(N-1)\int_{B_{R}}V(\rho(x))\dfrac{\varphi^{\prime}(\rho(x))}{\varphi
(\rho(x))}\left(  \dfrac{\cosh(\rho(x))}{\sinh(\rho(x))}-\dfrac{1}{\rho
(x)}\right)  |u|^{2}dV_{\mathbb{H}}\\
&  +\int_{B_{R}}V(\rho(x))\left\vert u\dfrac{\varphi^{\prime}(\rho
(x))}{\varphi(\rho(x))}\nabla_{\mathbb{H}}(\rho(x))-\nabla_{\mathbb{H}%
}u\right\vert ^{2}dV_{\mathbb{H}}.
\end{align*}
Hence,
\begin{align*}
&  \int_{B_{R}}V(\rho(X))|\nabla_{\mathbb{H}}u|^{2}dV_{\mathbb{H}}-\int
_{B_{R}}W(\rho(x))|u|^{2}dV_{\mathbb{H}}\\
&  =\int_{B_{R}}V(\rho(x))|\varphi^{2}(\rho(x))|\left\vert \nabla_{\mathbb{H}%
}\left(  \dfrac{u}{\varphi(\rho(x))}\right)  \right\vert ^{2}dV_{\mathbb{H}}\\
&  -(N-1)\int_{B_{R}}V(\rho(x))\dfrac{\varphi^{\prime}(\rho(x))}{\varphi
(\rho(x))}\dfrac{\rho(x)\cosh{\rho(x)}-\sinh{\rho(x)}}{\rho(x)\sinh{\rho(x)}%
}|u|^{2}dV_{\mathbb{H}},
\end{align*}
which is exactly Theorem \ref{Cor 1.1}.
\end{proof}

By choosing $\left(  r^{N-1},\left(  \dfrac{N-2}{2}\right)  ^{2}%
r^{N-3}\right)  $ as a Bessel pair on $(0,\infty)$ with its corresponding
solution $\varphi=r^{-\frac{N-2}{2}}$, we are able to get the Hardy inequality
on the hyperbolic space with its exact remainders, i.e.

\begin{corollary}
Let $N \geq3$. For $u \in C^{\infty}_{0}(\mathbb{H}^{N})$, we have
\begin{align*}
&  \int_{\mathbb{H}^{N}}|\nabla_{\mathbb{H}}u|^{2}dV_{\mathbb{H}}-\left(
\dfrac{N-2}{2}\right)  ^{2}\int_{\mathbb{H}^{N}}\dfrac{|u|^{2}}{\rho^{2}%
(x)}dV_{\mathbb{H}}\\
&  =\int_{\mathbb{H}^{N}}\dfrac{1}{\rho^{N-2}(x)}\left|  \nabla_{\mathbb{H}%
}\left(  u\rho^{(N-2)/2}(x)\right)  \right|  ^{2}dV_{\mathbb{H}}\\
&  +\dfrac{(N-1)(N-2)}{2}\int_{\mathbb{H}^{N}}\dfrac{\rho(x)\cosh{\rho
(x)}-\sinh{\rho(x)}}{\rho^{2}(x)\sinh{\rho(x)}}|u|^{2}dV_{\mathbb{H}}.
\end{align*}
Consequently, it follows the Hardy inequality
\[
\int_{\mathbb{H}^{N}}|\nabla_{\mathbb{H}}u|^{2}dV_{\mathbb{H}} \geq\left(
\dfrac{N-2}{2}\right)  ^{2}\int_{\mathbb{H}^{N}}\dfrac{|u|^{2}}{\rho^{2}%
(x)}dV_{\mathbb{H}}.
\]

\end{corollary}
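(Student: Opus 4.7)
The plan is to apply Theorem \ref{Cor 1.1} with the specific Bessel pair $(r^{N-1}V, r^{N-1}W) = (r^{N-1}, (\frac{N-2}{2})^2 r^{N-3})$, so that $V \equiv 1$ and $W(r) = \left(\frac{N-2}{2}\right)^2 r^{-2}$, and to check that $\varphi(r) = r^{-(N-2)/2}$ is a positive solution of the corresponding Bessel ODE $(r^{N-1}V\varphi')' + r^{N-1}W\varphi = 0$. A one-line differentiation gives $\varphi'(r) = -\frac{N-2}{2} r^{-N/2}$, hence $r^{N-1}V\varphi' = -\frac{N-2}{2} r^{(N-2)/2}$, whose derivative is $-\left(\frac{N-2}{2}\right)^2 r^{(N-4)/2}$, which exactly cancels $r^{N-1}W\varphi = \left(\frac{N-2}{2}\right)^2 r^{(N-4)/2}$.

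With these choices, $\varphi^{2}(\rho) = \rho^{-(N-2)}$, $\varphi(\rho)^{-1} = \rho^{(N-2)/2}$, and $\varphi'(\rho)/\varphi(\rho) = -\frac{N-2}{2\rho}$, so substituting into the identity of Theorem \ref{Cor 1.1} I expect the left-hand side to become exactly
$$\int_{\mathbb{H}^N} |\nabla_{\mathbb{H}} u|^2 dV_{\mathbb{H}} - \left(\frac{N-2}{2}\right)^2 \int_{\mathbb{H}^N} \frac{|u|^2}{\rho^2(x)} dV_{\mathbb{H}},$$
while on the right the first remainder reads $\int \rho^{-(N-2)} |\nabla_{\mathbb{H}}(u \rho^{(N-2)/2})|^2 dV_{\mathbb{H}}$ and the second, after absorbing the negative sign from $\varphi'/\varphi$, reads $\frac{(N-1)(N-2)}{2} \int \frac{\rho\cosh\rho - \sinh\rho}{\rho^2 \sinh\rho} |u|^2 dV_{\mathbb{H}}$. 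This yields the claimed identity for $u \in C_0^\infty(\mathbb{H}^N \setminus \{0\})$; one then passes to $u \in C_0^\infty(\mathbb{H}^N)$ via a standard cutoff argument, which works because for $N \geq 3$ the weight $\rho^{-2}$ is locally integrable against $dV_{\mathbb{H}}$ near the origin.

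The Hardy inequality is then immediate once we show both remainder terms are pointwise non-negative. The first is manifestly non-negative. For the second, set $f(\rho) := \rho\cosh\rho - \sinh\rho$; then $f(0) = 0$ and $f'(\rho) = \rho\sinh\rho \geq 0$ for $\rho \geq 0$, so $f(\rho) \geq 0$ on $[0,\infty)$, and dividing by $\rho^2\sinh\rho > 0$ preserves the sign. The main (mild) obstacle is therefore not algebraic but the approximation step from smooth functions vanishing near the origin to general $C_0^\infty(\mathbb{H}^N)$ functions; this is routine, using that the test integrals are finite when $N \geq 3$ and a bump function supported away from $0$ converging to $1$.
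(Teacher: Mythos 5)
Your proposal is correct and follows essentially the same route as the paper, which likewise obtains the corollary by plugging the Bessel pair $\left(r^{N-1},\left(\tfrac{N-2}{2}\right)^{2}r^{N-3}\right)$ with $\varphi(r)=r^{-(N-2)/2}$ into Theorem \ref{Cor 1.1}; your verification of the ODE and of the sign of $\rho\cosh\rho-\sinh\rho$ matches the paper's (largely implicit) computation. The only difference is that you explicitly flag the density step from $C_{0}^{\infty}(\mathbb{H}^{N}\setminus\{0\})$ to $C_{0}^{\infty}(\mathbb{H}^{N})$, which the paper passes over silently; that is a reasonable extra precaution rather than a different argument.
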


Furthermore, instead of utilizing $\lambda=1$ in \eqref{gnridt}, we can
optimize the parameter $\lambda$ to get the Caffarelli-Kohn-Nirenberg identity
with Bessel pairs that has been stated in Theorem \ref{M1}.

\begin{proof}
[Proof of Theorem \ref{M1}]By using the same arguments as in the proof of
Theorem \ref{Cor 1.1}, with $p=2$, $A=V(\rho(x))$, and $X=\dfrac
{\varphi^{\prime}(\rho(x))}{\varphi(\rho(x))}\nabla_{\mathbb{H}}(\rho(x))$, we
get
\begin{align*}
&  \lambda^{2}\int_{B_{R}}V(\rho(x))|\nabla_{\mathbb{H}}u|^{2}dV_{\mathbb{H}%
}+\dfrac{1}{\lambda^{2}}\int_{B_{R}}V(\rho(x))\dfrac{\left(  \varphi^{\prime
}(\rho(x))\right)  ^{2}}{\varphi^{2}(\rho(x))}\left\vert u\right\vert
^{2}dV_{\mathbb{H}}\\
&  =\int_{B_{R}}W(\rho(x))|u|^{2}dV_{\mathbb{H}}+\int_{B_{R}}V(\rho
(x))\dfrac{\left(  \varphi^{\prime}(\rho(x))\right)  ^{2}}{\varphi^{2}%
(\rho(x))}\left\vert u\right\vert ^{2}dV_{\mathbb{H}}\\
&  -(N-1)\int_{B_{R}}V(\rho(x))\dfrac{\varphi^{\prime}(\rho(x))}{\varphi
(\rho(x))}\left(  \dfrac{\cosh(\rho(x))}{\sinh(\rho(x))}-\dfrac{1}{\rho
(x)}\right)  |u|^{2}dV_{\mathbb{H}}\\
&  +\int_{B_{R}}V(\rho(x))\left\vert \dfrac{1}{\lambda}u\dfrac{\varphi
^{\prime}(\rho(x))}{\varphi(\rho(x))}\nabla_{\mathbb{H}}(\rho(x))-\lambda
\nabla_{\mathbb{H}}u\right\vert ^{2}dV_{\mathbb{H}}.
\end{align*}
By choosing $\lambda=\left(  \dfrac{\int_{B_{R}}V(\rho(x))\frac{\left(
\varphi^{\prime}(\rho(x))\right)  ^{2}}{\varphi^{2}(\rho(x))}\left\vert
u\right\vert ^{2}dV_{\mathbb{H}}}{\int_{B_{R}}V(\rho(x))|\nabla_{\mathbb{H}%
}u|^{2}dV_{\mathbb{H}}}\right)  ^{1/4}$, it implies
\begin{align*}
&  \left(  \int_{B_{R}}V(\rho(x))|\nabla_{\mathbb{H}}u|^{2}dV_{\mathbb{H}%
}\right)  ^{1/2}\left(  \int_{B_{R}}V(\rho(x))\dfrac{\left(  \varphi^{\prime
}(\rho(x))\right)  ^{2}}{\varphi^{2}(\rho(x))}\left\vert u\right\vert
^{2}dV_{\mathbb{H}}\right)  ^{1/2}\\
&  =\dfrac{1}{2}\int_{B_{R}}\left(  W(\rho(x))+V(\rho(x))\dfrac{\left(
\varphi^{\prime}(\rho(x))\right)  ^{2}}{\varphi^{2}(\rho(x))}\right)
\left\vert u\right\vert ^{2}dV_{\mathbb{H}}\\
&  -\dfrac{N-1}{2}\int_{B_{R}}V(\rho(x))\dfrac{\varphi^{\prime}(\rho
(x))}{\varphi(\rho(x))}\left(  \dfrac{\rho(x)\cosh\rho(x)-\sinh\rho(x)}%
{\rho(x)\sinh\rho(x)}\right)  |u|^{2}dV_{\mathbb{H}}\\
&  +\dfrac{1}{2}\int_{B_{R}}V(\rho(x))\left\vert \dfrac{1}{\lambda}%
u\dfrac{\varphi^{\prime}(\rho(x))}{\varphi(\rho(x))}\nabla_{\mathbb{H}}%
(\rho(x))-\lambda\nabla_{\mathbb{H}}u\right\vert ^{2}dV_{\mathbb{H}}.
\end{align*}
Therefore, since $\frac{\varphi^{\prime}}{\varphi}$ is negative and $V$ is
nonnegative, we have the following family of Caffarelli-Kohn-Nirenberg
inequalities with Bessel pairs as a direct consequence of the above identity
\begin{align*}
&  \left(  \int_{B_{R}}V(\rho(x))|\nabla_{\mathbb{H}}u|^{2}dV_{\mathbb{H}%
}\right)  ^{1/2}\left(  \int_{B_{R}}V(\rho(x))\dfrac{\left(  \varphi^{\prime
}(\rho(x))\right)  ^{2}}{\varphi^{2}(\rho(x))}\left\vert u\right\vert
^{2}dV_{\mathbb{H}}\right)  ^{1/2}\\
&  \geq\dfrac{1}{2}\int_{B_{R}}\left(  W(\rho(x))+V(\rho(x))\dfrac{\left(
\varphi^{\prime}(\rho(x))\right)  ^{2}}{\varphi^{2}(\rho(x))}\right)
\left\vert u\right\vert ^{2}dV_{\mathbb{H}},
\end{align*}
as desired.
\end{proof}

\subsection{The second application: The Caffarelli-Kohn-Nirenberg identities
on the hyperbolic space}

\label{subsct4.2} As another application of Theorem \ref{Thm 1.1}, we can get
some Caffarelli-Kohn-Nirenberg identities and inequalities on the hyperbolic
space. In particular, we prove

\begin{theorem}
\label{thmckn} Let $N\geq2.$

\begin{itemize}
\item For $b+1-a>0$ and $b \leq\dfrac{N-2}{2}$ and $u \in C_{0}^{\infty
}(\mathbb{H}^{N}\setminus\{0\})$ there holds%

\begin{equation}
\left(  \int_{\mathbb{H}^{N}}\dfrac{|\nabla_{\mathbb{H}}u|^{2}}{\rho^{2b}%
(x)}dV_{\mathbb{H}}\right)  \left(  \int_{\mathbb{H}^{N}}\dfrac{|u|^{2}}%
{\rho^{2a}(x)}dV_{\mathbb{H}}\right)  \geq\dfrac{(N-1-a-b)^{2}}{4}\left(
\int_{\mathbb{H}^{N}}\dfrac{|u|^{2}}{\rho^{a+b+1}(x)}dV_{\mathbb{H}}\right)
^{2}. \label{CKN1}%
\end{equation}

\medskip

\item For $b+1-a<0$ and $b \geq\dfrac{N-2}{2}$ and $u \in C_{0}^{\infty
}(\mathbb{H}^{N} \setminus\{0\})$ there holds%

\begin{align}
&  \left(  \int_{\mathbb{H}^{N}}\dfrac{|\nabla_{\mathbb{H}}u|^{2}}{\rho
^{2b}(x)}dV_{\mathbb{H}}\right)  ^{1/2}\left(  \int_{\mathbb{H}^{N}}%
\dfrac{|u|^{2}}{\rho^{2a}(x)}dV_{\mathbb{H}}\right)  ^{1/2}%
\nonumber\label{L2CKNidtc1}\\
&  -\dfrac{1}{2}\int_{\mathbb{H}^{N}}\left(  a+b-N+1-(N-1)\left(  \rho
(x)\coth{\rho(x)}-1\right)  \right)  \dfrac{|u|^{2}}{\rho^{a+b+1}%
(x)}dV_{\mathbb{H}}\nonumber\\
&  =\dfrac{\lambda^{b-a+1}}{2}\int_{\mathbb{H}^{N}}\dfrac{1}{\rho^{2b}%
(x)}\left\vert \nabla_{\mathbb{H}}\left(  u.e^{-\frac{\rho^{b-a+1}%
(x)}{(b-a+1)\lambda^{b-a+1}}}\right)  \right\vert ^{2}e^{\frac{2\rho
^{b-a+1}(x)}{(b-a+1)\lambda^{b-a+1}}}dV_{\mathbb{H}}.
\end{align}

\medskip

\item For $b+1-a<0$ and $b \geq\dfrac{N-2}{2}$ and $u \in C_{0}^{\infty
}(\mathbb{H}^{N} \setminus\{0\})$ there holds%

\begin{align}
&  \left(  \int_{\mathbb{H}^{N}}\dfrac{\left\vert \nabla_{\mathbb{H}%
}u\right\vert ^{2}}{\rho^{2b}(x)}dV_{\mathbb{H}}\right)  ^{1/2}\left(
\int_{\mathbb{H}^{N}}\dfrac{\left\vert u\right\vert ^{2}}{\rho^{2a}%
(x)}dV_{\mathbb{H}}\right)  ^{1/2}\nonumber\label{L2CKNidtc3}\\
&  -\dfrac{1}{2}\int_{\mathbb{H}^{N}}\left[  N-3b+a-3+(N-1)(1-\rho
(x)\coth(\rho(x)))\right]  \dfrac{\left\vert u\right\vert ^{2}}{\rho
^{a+b+1}(x)}dV_{\mathbb{H}}\nonumber\\
&  =\dfrac{\lambda^{b-a+1}}{2}(N-2b-2)(N-1)\int_{\mathbb{H}^{N}}(\rho
(x)\coth(\rho(x))-1)\dfrac{\left\vert u\right\vert ^{2}}{\rho^{2b+2}%
(x)}dV_{\mathbb{H}}\nonumber\\
&  +\dfrac{\lambda^{b-a+1}}{2}\int_{\mathbb{H}^{N}}\rho^{4+2b-2N}(x)\left\vert
\nabla_{\mathbb{H}}\left(  u\rho^{N-2b-2}(x)e^{-\frac{\rho^{b-a+1}%
(x)}{(b-a+1)\lambda^{b-a+1}}}\right)  \right\vert ^{2}e^{\frac{2\rho
^{b-a+1}(x)}{(b-a+1)\lambda^{b-a+1}}}dV_{\mathbb{H}}.
\end{align}

\medskip

\item For $b+1-a>0$ and $b \geq\dfrac{N-2}{2}$ and $u \in C_{0}^{\infty
}(\mathbb{H}^{N} \setminus\{0\})$ there holds%

\begin{align}
&  \left(  \int_{\mathbb{H}^{N}}\dfrac{\left\vert \nabla_{\mathbb{H}%
}u\right\vert ^{2}}{\rho^{2b}(x)}dV_{\mathbb{H}}\right)  ^{1/2}\left(
\int_{\mathbb{H}^{N}}\dfrac{\left\vert u\right\vert ^{2}}{\rho^{2a}%
(x)}dV_{\mathbb{H}}\right)  ^{1/2}\nonumber\label{L2CKNidtc4}\\
&  -\dfrac{1}{2}\int_{\mathbb{H}^{N}}\left[  3b-a-N+3-(N-1)(1-\rho
(x)\coth(\rho(x)))\right]  \dfrac{\left\vert u\right\vert ^{2}}{\rho
^{a+b+1}(x)}dV_{\mathbb{H}}\nonumber\\
&  =\dfrac{\lambda^{b-a+1}}{2}(N-2b-2)(N-1)\int_{\mathbb{H}^{N}}(\rho
(x)\coth(\rho(x))-1)\dfrac{\left\vert u\right\vert ^{2}}{\rho^{2b+2}%
(x)}dV_{\mathbb{H}}\nonumber\\
&  +\dfrac{\lambda^{b-a+1}}{2}\int_{\mathbb{H}^{N}}\rho^{4+2b-2N}(x)\left\vert
\nabla_{\mathbb{H}}\left(  u\rho^{N-2b-2}(x)e^{\frac{\rho^{b-a+1}%
(x)}{(b-a+1)\lambda^{b-a+1}}}\right)  \right\vert ^{2}e^{-\frac{2\rho
^{b-a+1}(x)}{(b-a+1)\lambda^{b-a+1}}}dV_{\mathbb{H}}.
\end{align}

\end{itemize}
\end{theorem}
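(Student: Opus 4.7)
The plan is to specialize Theorem~\ref{Thm 1.1} to $p=2$, $A(x)=\rho^{-2b}(x)$, and $X=\epsilon\,\rho^{b-a}(x)\nabla_{\mathbb{H}}\rho(x)$ with $\epsilon=\pm 1$ chosen by case, using the two identities $|\nabla_{\mathbb{H}}\rho|=1$ and $\Delta_{\mathbb{H}}\rho=(N-1)\coth\rho$. These give $A|X|^{2}=\rho^{-2a}$ and
\[
-\operatorname{div}(\epsilon\,\rho^{-a-b}\nabla_{\mathbb{H}}\rho)=-\epsilon\,\rho^{-a-b-1}\bigl[(N-1-a-b)+(N-1)(\rho\coth\rho-1)\bigr],
\]
so Theorem~\ref{Thm 1.1} specializes to
\[
\lambda^{2}\!\int_{\mathbb{H}^{N}}\!\rho^{-2b}|\nabla_{\mathbb{H}}u|^{2}dV_{\mathbb{H}}+\lambda^{-2}\!\int_{\mathbb{H}^{N}}\!\rho^{-2a}|u|^{2}dV_{\mathbb{H}}=-\!\int_{\mathbb{H}^{N}}\!\operatorname{div}(\rho^{-2b}X)|u|^{2}dV_{\mathbb{H}}+\!\int_{\mathbb{H}^{N}}\!\rho^{-2b}|\lambda\nabla_{\mathbb{H}}u-uX/\lambda|^{2}dV_{\mathbb{H}}.
\]

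For part 1, I take $\epsilon=-1$, drop the non-negative squared remainder, and use $\rho\coth\rho\geq 1$. Under $b+1-a>0$ and $b\leq(N-2)/2$ one has $N-1-a-b>0$, so the divergence term bounds below by $(N-1-a-b)\int\rho^{-a-b-1}|u|^{2}dV_{\mathbb{H}}$. Minimizing in $\lambda>0$ by AM--GM and then squaring delivers \eqref{CKN1}.

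For part 2, I take $\epsilon=+1$ and set $\psi(x):=\rho^{b-a+1}(x)/[(b-a+1)\lambda^{b-a+1}]$, so that $\nabla_{\mathbb{H}}\psi=(\rho^{b-a}/\lambda^{b-a+1})\nabla_{\mathbb{H}}\rho$. Choosing the specific $\lambda$ that makes AM--GM tight, namely $\lambda^{4}=I_{2}/I_{1}$ with $I_{1}=\int\rho^{-2b}|\nabla_{\mathbb{H}}u|^{2}dV_{\mathbb{H}}$ and $I_{2}=\int\rho^{-2a}|u|^{2}dV_{\mathbb{H}}$, one verifies $uX/\lambda^{2}=u\nabla_{\mathbb{H}}\psi$, the left-hand side collapses to $2\sqrt{I_{1}I_{2}}$, and the squared remainder rewrites as $\lambda^{b-a+1}\int\rho^{-2b}|\nabla_{\mathbb{H}}u-u\nabla_{\mathbb{H}}\psi|^{2}dV_{\mathbb{H}}=\lambda^{b-a+1}\int\rho^{-2b}|\nabla_{\mathbb{H}}(ue^{-\psi})|^{2}e^{2\psi}dV_{\mathbb{H}}$. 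Dividing by $2$ and inserting the divergence computation yields \eqref{L2CKNidtc1}.

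For parts 3 and 4, I would apply the same scheme after the shift $v:=u\rho^{N-2b-2}$ (equivalently, a Bessel-pair application of Theorem~\ref{M1} with $\varphi=\rho^{-(N-2b-2)}e^{\mp\psi}$), which converts the gradient weight $\rho^{-2b}$ into $\rho^{4+2b-2N}$ while producing, via $\Delta_{\mathbb{H}}\rho=(N-1)\coth\rho$, exactly the correction $(N-2b-2)(N-1)\int(\rho\coth\rho-1)|u|^{2}\rho^{-2b-2}dV_{\mathbb{H}}$ that appears in \eqref{L2CKNidtc3}--\eqref{L2CKNidtc4}. The hypothesis $b\geq(N-2)/2$ guarantees $N-2b-2\leq 0$, so $\varphi'/\varphi$ has the sign consistent with the chosen branch of the exponential twist. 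The main obstacle is conceptual: $\mathbb{H}^{N}$ admits no Euclidean-style dilation, so scale has to be manufactured through the $\lambda$-parameter of Theorem~\ref{Thm 1.1}; technically, the delicate step in parts 3 and 4 is the simultaneous bookkeeping of the algebraic twist $\rho^{N-2b-2}$ and the exponential twist $e^{\mp\psi}$ so that their interaction with the geodesic divergence assembles into that single correction term.
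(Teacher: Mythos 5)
Your proposal is correct and follows essentially the same route as the paper: specialize Theorem \ref{Thm 1.1} with $p=2$, $A=\rho^{-2b}(x)$ and a radial vector field proportional to $\rho^{b-a}\nabla_{\mathbb{H}}\rho$ (augmented by the extra term $-(N-2b-2)\rho^{-1}\nabla_{\mathbb{H}}\rho$ for parts 3 and 4, which is exactly your shift $v=u\rho^{N-2b-2}$), compute the divergence via $\Delta_{\mathbb{H}}\rho=(N-1)\coth\rho$, and optimize the scaling parameter. The only caveat is notational: in part 2 you use the single symbol $\lambda$ both for the AM--GM parameter (your $\lambda^{4}=I_{2}/I_{1}$) and for the parameter entering $\psi$ through $\lambda^{b-a+1}$; the identity as stated requires $\lambda^{b-a+1}=\sqrt{I_{2}/I_{1}}$, i.e.\ $\lambda=(I_{2}/I_{1})^{1/(2b+2-2a)}$, which is the paper's normalization and makes your verification $uX/\lambda_{\mathrm{AM\text{-}GM}}^{2}=u\nabla_{\mathbb{H}}\psi$ consistent.
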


\begin{proof}
[Proof of Theorem \ref{thmckn}]We begin with the proof of \eqref{CKN1}. By
choosing $\lambda=1$, $p=2$, $A=\frac{1}{\rho^{2b}(x)}$, and $X=-\dfrac
{1}{\lambda^{b-a+1}}\rho^{b-a}(x)\nabla_{\mathbb{H}}(\rho(x))$ in
Theorem~\ref{Thm 1.1} we have
\begin{align*}
-\lambda^{b-a+1}\operatorname{div}\left(  AX\right)   &  =\operatorname{div}%
\left(  \frac{1}{\rho^{2b}(x)}\rho^{b-a}(x)\nabla_{\mathbb{H}}(\rho(x))\right)
\\
&  =\operatorname{div}\left(  \rho^{-b-a}(x)\nabla_{\mathbb{H}}(\rho
(x))\right) \\
&  =\dfrac{1}{\sqrt{\det(g)}}\sum_{i=1}^{N}\dfrac{\partial}{\partial x_{i}%
}\left(  \rho^{-b-a}(x)\dfrac{(1-|x|^{2})x_{i}}{2|x|}\sqrt{\det(g)}\right) \\
&  =\dfrac{(1-|x|^{2})^{N}}{2^{N}}\sum_{i=1}^{N}\dfrac{\partial}{\partial
x_{i}}\left(  \rho^{-b-a}(x)\dfrac{(1-|x|^{2})x_{i}}{2|x|}\dfrac{2^{N}%
}{(1-|x|^{2})^{N}}\right) \\
&  =(-b-a)\rho^{b-a-1}(x)+(N-1)\rho^{-b-a}(x)|x|+(N-1)\dfrac{1-|x|^{2}}%
{2|x|}\rho^{-b-a}(x)\\
&  =(-b-a)\rho^{-b-a-1}(x)+(N-1)\rho^{-b-a}(x)\coth{\rho(x)}\\
&  =\rho^{-b-a-1}(x)\left(  -b-a+(N-1)\rho(x)\coth{\rho(x)}\right)  .
\end{align*}
From \eqref{gnridt}, we get%

\begin{align*}
\int_{\mathbb{H}^{N}}\dfrac{|\nabla_{\mathbb{H}}u|^{2}}{\rho^{2b}%
(x)}dV_{\mathbb{H}}  &  +\dfrac{1}{\lambda^{2b-2a+2}}\int_{\mathbb{H}^{N}%
}\dfrac{|u|^{2}}{\rho^{2a}(x)}dV_{\mathbb{H}}\\
&  -\dfrac{1}{\lambda^{b-a+1}}\int_{\mathbb{H}^{N}}\left(
N-1-a-b+(N-1)\left(  \rho(x)\coth{\rho(x)}-1\right)  \right)  \dfrac{|u|^{2}%
}{\rho^{a+b+1}(x)}dV_{\mathbb{H}}\\
&  =\int_{\mathbb{H}^{N}}\dfrac{1}{\rho^{2b}(x)}\left\vert \dfrac{u\rho
^{b-a}(x)\nabla_{\mathbb{H}}\rho(x)}{\lambda^{b-a+1}}+\nabla_{\mathbb{H}%
}u\right\vert ^{2}dV_{\mathbb{H}}\\
&  =\int_{\mathbb{H}^{N}}\dfrac{1}{\rho^{2b}(x)}\left\vert \nabla_{\mathbb{H}%
}\left(  u.e^{\frac{\rho^{b-a+1}(x)}{(b-a+1)\lambda^{b-a+1}}}\right)
\right\vert ^{2}e^{\frac{-2\rho^{b-a+1}(x)}{(b-a+1)\lambda^{b-a+1}}%
}dV_{\mathbb{H}},
\end{align*}
which implies that
\begin{align*}
\lambda^{b-a+1}\int_{\mathbb{H}^{N}}\dfrac{|\nabla_{\mathbb{H}}u|^{2}}%
{\rho^{2b}(x)}dV_{\mathbb{H}}  &  +\dfrac{1}{\lambda^{b-a+1}}\int
_{\mathbb{H}^{N}}\dfrac{|u|^{2}}{\rho^{2a}(x)}dV_{\mathbb{H}}\\
&  -\int_{\mathbb{H}^{N}}\left(  N-1-a-b+(N-1)\left(  \rho(x)\coth{\rho
(x)}-1\right)  \right)  \dfrac{|u|^{2}}{\rho^{a+b+1}(x)}dV_{\mathbb{H}}\\
&  =\lambda^{b-a+1}\int_{\mathbb{H}^{N}}\dfrac{1}{\rho^{2b}(x)}\left\vert
\nabla_{\mathbb{H}}\left(  u.e^{\frac{\rho^{b-a+1}(x)}{(b-a+1)\lambda^{b-a+1}%
}}\right)  \right\vert ^{2}e^{\frac{-2\rho^{b-a+1}(x)}{(b-a+1)\lambda^{b-a+1}%
}}dV_{\mathbb{H}}%
\end{align*}
By choosing $\lambda=\left(  \dfrac{\int_{\mathbb{H}^{N}}|u|^{2}/\rho
^{2a}(x)dV_{\mathbb{H}}}{\int_{\mathbb{H}^{N}}|\nabla_{\mathbb{H}}u|^{2}%
/\rho^{2b}(x)dV_{\mathbb{H}}}\right)  ^{1/(2b+2-2a)}$, we have the following
identity:
\begin{align*}
&  \left(  \int_{\mathbb{H}^{N}}\dfrac{|\nabla_{\mathbb{H}}u|^{2}}{\rho
^{2b}(x)}dV_{\mathbb{H}}\right)  ^{1/2}\left(  \int_{\mathbb{H}^{N}}%
\dfrac{|u|^{2}}{\rho^{2a}(x)}dV_{\mathbb{H}}\right)  ^{1/2}\\
&  -\dfrac{1}{2}\int_{\mathbb{H}^{N}}\left(  N-1-a-b+(N-1)\left(  \rho
(x)\coth{\rho(x)}-1\right)  \right)  \dfrac{|u|^{2}}{\rho^{a+b+1}%
(x)}dV_{\mathbb{H}}\\
&  =\dfrac{\lambda^{b-a+1}}{2}\int_{\mathbb{H}^{N}}\dfrac{1}{\rho^{2b}%
(x)}\left\vert \nabla_{\mathbb{H}}\left(  u.e^{\frac{\rho^{b-a+1}%
(x)}{(b-a+1)\lambda^{b-a+1}}}\right)  \right\vert ^{2}e^{\frac{-2\rho
^{b-a+1}(x)}{(b-a+1)\lambda^{b-a+1}}}dV_{\mathbb{H}}.
\end{align*}
Therefore, we have
\[
\left(  \int_{\mathbb{H}^{N}}\dfrac{|\nabla_{\mathbb{H}}u|^{2}}{\rho^{2b}%
(x)}dV_{\mathbb{H}}\right)  \left(  \int_{\mathbb{H}^{N}}\dfrac{|u|^{2}}%
{\rho^{2a}(x)}dV_{\mathbb{H}}\right)  \geq\dfrac{(N-1-a-b)^{2}}{4}\left(
\int_{\mathbb{H}^{N}}\dfrac{|u|^{2}}{\rho^{a+b+1}(x)}dV_{\mathbb{H}}\right)
^{2}.
\]

\medskip

To prove \eqref{L2CKNidtc1}, choose $\lambda=1,p=2$, $A=\frac{1}{\rho^{2b}%
(x)}$, and $X=\dfrac{1}{\lambda^{b-a+1}}\rho^{b-a}(x)\nabla_{\mathbb{H}}%
(\rho(x))$ in Theorem~\ref{Thm 1.1} and by a similar computation as above we
conclude the result.

\medskip

Next, we will present a proof of \eqref{L2CKNidtc3}. By choosing
$\lambda=1,p=2$, $A=\frac{1}{\rho^{2b}(x)}$, and $X=\dfrac{1}{\lambda^{b-a+1}%
}\rho^{b-a}(x)\nabla_{\mathbb{H}}(\rho(x))-(N-2b-2)\dfrac{\nabla_{\mathbb{H}%
}(\rho(x))}{\rho(x)}$ in Theorem~\ref{Thm 1.1}, and further computing each
terms separately
\begin{align*}
\operatorname{div}\left(  AX\right)   &  =\dfrac{1}{\lambda^{b+1-a}%
}\operatorname{div}\left(  \rho^{-b-a}(x)\nabla_{\mathbb{H}}(\rho(x))\right)
-(N-2b-2)\operatorname{div}\left(  \rho^{-2b-1}(x)\nabla_{\mathbb{H}}%
(\rho(x))\right) \\
&  =\dfrac{1}{\lambda^{b+1-a}}\left[  \nabla_{\mathbb{H}}(\rho^{-b-a}%
(x))\cdot\nabla_{\mathbb{H}}(\rho(x))+\rho^{-b-a}(x)\operatorname{div}%
(\nabla_{\mathbb{H}}(\rho(x)))\right] \\
&  -(N-2b-2)\left[  \nabla_{\mathbb{H}}(\rho^{-2b-1}(x))\cdot\nabla
_{\mathbb{H}}(\rho(x))+\rho^{-2b-1}(x)\operatorname{div}(\nabla_{\mathbb{H}%
}(\rho(x)))\right] \\
&  =\dfrac{1}{\lambda^{b+1-a}}\left[  (-b-a)\rho^{-b-a-1}(x)+(N-1)\rho
^{-b-a}(x)\coth{(\rho(x))}\right] \\
&  -(N-2b-2)\left[  (-2b-1)\rho^{-2b-2}(x)+(N-1)\rho^{-2b-1}(x)\coth
{(\rho(x))}\right] \\
&  =\dfrac{1}{\lambda^{b+1-a}}\rho^{-b-a-1}(x)\left[  (-b-a)+(N-1)\rho
(x)\coth{(\rho(x))}\right] \\
&  -(N-2b-2)\rho^{-2b-2}(x)\left[  (-2b-1)+(N-1)\rho(x)\coth{(\rho
(x))}\right]  .
\end{align*}
Moreover,
\[
|X|^{2}=\dfrac{\rho^{2b-2a}(x)}{\lambda^{2b+2-2a}}+(N-2b-2)^{2}\dfrac{1}%
{\rho^{2}(x)}-\dfrac{2(N-2b-2)}{\lambda^{b+1-a}}\rho^{b-a-1}(x),
\]
and
\[
\left\vert uX-\nabla_{\mathbb{H}}u\right\vert ^{2}=\left\vert u\dfrac
{\varphi^{\prime}(\rho(x))}{\varphi(\rho(x))}\nabla_{\mathbb{H^{N}}}%
(\rho(x))-\nabla_{\mathbb{H}}u\right\vert ^{2}=\varphi^{2}(\rho(x))\left\vert
\nabla\left(  \dfrac{u}{\varphi(\rho(x))}\right)  \right\vert ^{2},
\]
where $\varphi(r)=r^{2b+2-N}e^{\frac{r^{b+1-a}}{(b+1-a)\lambda^{b+1-a}}}$.
Therefore, from \eqref{gnridt}, we obtain
\begin{align*}
&  \int_{\mathbb{H}^{N}}\dfrac{\left\vert \nabla_{\mathbb{H}}u\right\vert
^{2}}{\rho^{2b}(x)}dV_{\mathbb{H}}+\dfrac{1}{\lambda^{2b-2a+2}}\int
_{\mathbb{H}^{N}}\dfrac{\left\vert u\right\vert ^{2}}{\rho^{2a}(x)}%
dV_{\mathbb{H}}+(N-2b-2)^{2}\int_{\mathbb{H}^{N}}\dfrac{\left\vert
u\right\vert ^{2}}{\rho^{2b+2}(x)}dV_{\mathbb{H}}\\
&  =\dfrac{1}{\lambda^{b-a+1}}\int_{\mathbb{H}^{N}}\left[
N-3b+a-3+(N-1)(1-\rho(x)\coth(\rho(x)))\right]  \dfrac{\left\vert u\right\vert
^{2}}{\rho^{a+b+1}(x)}dV_{\mathbb{H}}\\
&  +(N-2b-2)\int_{\mathbb{H}^{N}}\left[  N-2b-2+(N-1)(\rho(x)\coth
(\rho(x))-1)\right]  \dfrac{\left\vert u\right\vert ^{2}}{\rho^{2b+2}%
(x)}dV_{\mathbb{H}}\\
&  +\int_{\mathbb{H}^{N}}\rho^{4+2b-2N}(x)\left\vert \nabla_{\mathbb{H}%
}\left(  u\rho^{N-2b-2}(x)e^{-\frac{\rho^{b-a+1}(x)}{(b-a+1)\lambda^{b-a+1}}%
}\right)  \right\vert ^{2}e^{\frac{2\rho^{b-a+1}(x)}{(b-a+1)\lambda^{b-a+1}}%
}dV_{\mathbb{H}}.
\end{align*}
It implies that
\begin{align*}
&  \lambda^{b-a+1}\int_{\mathbb{H}^{N}}\dfrac{\left\vert \nabla_{\mathbb{H}%
}u\right\vert ^{2}}{\rho^{2b}(x)}dV_{\mathbb{H}}+\dfrac{1}{\lambda^{b-a+1}%
}\int_{\mathbb{H}^{N}}\dfrac{\left\vert u\right\vert ^{2}}{\rho^{2a}%
(x)}dV_{\mathbb{H}}\\
&  -\int_{\mathbb{H}^{N}}\left[  N-3b+a-3+(N-1)(1-\rho(x)\coth(\rho
(x)))\right]  \dfrac{\left\vert u\right\vert ^{2}}{\rho^{a+b+1}(x)}%
dV_{\mathbb{H}}\\
&  =\lambda^{b-a+1}(N-2b-2)(N-1)\int_{\mathbb{H}^{N}}(\rho(x)\coth
(\rho(x))-1)\dfrac{\left\vert u\right\vert ^{2}}{\rho^{2b+2}(x)}%
dV_{\mathbb{H}}\\
&  +\lambda^{b-a+1}\int_{\mathbb{H}^{N}}\rho^{4+2b-2N}(x)\left\vert
\nabla_{\mathbb{H}}\left(  u\rho^{N-2b-2}(x)e^{-\frac{\rho^{b-a+1}%
(x)}{(b-a+1)\lambda^{b-a+1}}}\right)  \right\vert ^{2}e^{\frac{2\rho
^{b-a+1}(x)}{(b-a+1)\lambda^{b-a+1}}}dV_{\mathbb{H}}.
\end{align*}
By choosing $\lambda=\left(  \dfrac{\int_{\mathbb{H}^{N}}|u|^{2}/\rho
^{2a}(x)dV_{\mathbb{H}}}{\int_{\mathbb{H}^{N}}|\nabla_{\mathbb{H}}u|^{2}%
/\rho^{2b}(x)dV_{\mathbb{H}}}\right)  ^{1/(2b+2-2a)},$ we prove the required identity.

\medskip

Finally, we shall prove \eqref{L2CKNidtc4}. By utilizing $\lambda=1,p=2$,
$A=\frac{1}{\rho^{2b}(x)}$, and
\[
X=-\dfrac{1}{\lambda^{b-a+1}}\rho^{b-a}(x)\nabla_{\mathbb{H}}(\rho
(x))-(N-2b-2)\dfrac{\nabla_{\mathbb{H}}(\rho(x))}{\rho(x)}%
\]
in Theorem~\ref{Thm 1.1} we prove the required identity.
\end{proof}

\medskip

\subsection{The third application: Heisenberg Uncertainty Principle Identity}

\label{subsct4.3} In this subsection, we will take advantage of Theorem
\ref{Thm 1.1} again to get an identity that would allow us to acquire some
$L^{2}$-stability results of the Heisenberg uncertainty principle on the
hyperbolic space.

\begin{proof}
[Proof of Theorem \ref{M2}]By applying Theorem \ref{Thm 1.1} with $p=2,A=1$
and $X=-\rho(x)\nabla_{\mathbb{H}}(\rho(x))$, \eqref{gnridt} turns into
\begin{align*}
&  \lambda^{2}\int_{\mathbb{H}^{N}}|\nabla_{\mathbb{H}}u|^{2}dV_{\mathbb{H}%
}+\dfrac{1}{\lambda^{2}}\int_{\mathbb{H}^{N}}\rho^{2}(x)|u|^{2}|\nabla
_{\mathbb{H}}(\rho(x))|^{2}dV_{\mathbb{H}}\\
&  =\int_{\mathbb{H}^{N}}\text{div}(\rho(x)\nabla_{\mathbb{H}}(\rho
(x)))|u|^{2}dV_{\mathbb{H}}+\int_{\mathbb{H}^{N}}\mathcal{R}_{2}\left(
\dfrac{-u\rho(x)\nabla_{\mathbb{H}}(\rho(x))}{\lambda},\lambda\ \nabla
_{\mathbb{H}}u\right)  dV_{\mathbb{H}}.
\end{align*}
This is a specific instance of the first case from the previous section, where
we select $a=-1$ and $b=0$. Now, we will present a detailed computation for
this case. We have
\[
d\rho(x)=\sum_{i=1}^{N}\partial_{i}\left(  \ln\dfrac{1+|x|}{1-|x|}\right)
dx_{i}.
\]
Then
\[
\nabla_{\mathbb{H}}(\rho(x))=\sum_{i=1}^{N}\dfrac{(1-|x|^{2})x_{i}}%
{2|x|}\dfrac{\partial}{\partial{x_{i}}},
\]
which implies that
\begin{equation}
|\nabla_{\mathbb{H}}(\rho(x))|^{2}=\dfrac{4}{(1-|x|^{2})^{2}}\sum_{i=1}%
^{N}\dfrac{(1-|x|^{2})^{2}}{4}\dfrac{x_{i}^{2}}{|x|^{2}}=1. \label{eq L2.1}%
\end{equation}
A straightforward computation gives
\begin{equation}
\operatorname{div}\left(  \rho(x)\nabla_{\mathbb{H}}(\rho(x))\right)
\;=\;N+(N-1)\dfrac{\rho(x)\cosh\rho(x)-\sinh\rho(x)}{\sinh\rho(x)}.
\label{eq L2.2}%
\end{equation}
Moreover,
\begin{align}
\mathcal{R}_{2}\left(  \dfrac{-u\rho(x)\nabla_{\mathbb{H}}(\rho(x))}{\lambda
},\lambda\nabla_{\mathbb{H}}u\right)   &  =\left\vert \dfrac{u\rho
(x)\nabla_{\mathbb{H}}(\rho(x))}{\lambda}+\lambda\nabla_{\mathbb{H}%
}u\right\vert ^{2}\nonumber\label{eq L2.3}\\
&  =\lambda^{2}e^{-\frac{\rho^{2}(x)}{\lambda^{2}}}\left\vert \dfrac
{u\rho(x)\nabla_{\mathbb{H}}(\rho(x))}{\lambda^{2}}e^{\frac{\rho^{2}%
(x)}{2\lambda^{2}}}+\nabla_{\mathbb{H}}ue^{\frac{\rho^{2}(x)}{2\lambda^{2}}%
}\right\vert ^{2}\nonumber\\
&  =\lambda^{2}e^{-\frac{\rho^{2}(x)}{\lambda^{2}}}\left\vert \nabla
_{\mathbb{H}}\left(  ue^{\frac{\rho^{2}(x)}{2\lambda^{2}}}\right)  \right\vert
^{2}.
\end{align}
Therefore, from \eqref{eq L2.1}, \eqref{eq L2.2} and \eqref{eq L2.3}, we are
able to get
\begin{align}
&  \lambda^{2}\int_{\mathbb{H}^{N}}|\nabla_{\mathbb{H}}u|^{2}dV_{\mathbb{H}%
}+\lambda^{-2}\int_{\mathbb{H}^{N}}\rho^{2}(x)|u|^{2}dV_{\mathbb{H}%
}\nonumber\label{L2idt}\\
&  -\int_{\mathbb{H}^{N}}\left(  N+(N-1)\dfrac{\rho(x)\cosh\rho(x)-\sinh
\rho(x)}{\sinh\rho(x)}\right)  \;|u|^{2}dV_{\mathbb{H}}\nonumber\\
&  =\lambda^{2}\int_{\mathbb{H}^{N}}e^{-\frac{\rho^{2}(x)}{\lambda^{2}}%
}\left\vert \nabla_{\mathbb{H}}\left(  ue^{\frac{\rho^{2}(x)}{2\lambda^{2}}%
}\right)  \right\vert ^{2}dV_{\mathbb{H}}.
\end{align}
By choosing $\lambda=\left(  \dfrac{\int_{\mathbb{H}^{N}}\rho^{2}%
(x)|u|^{2}dV_{\mathbb{H}}}{\int_{\mathbb{H}^{N}}|\nabla_{\mathbb{H}}%
u|^{2}dV_{\mathbb{H}}}\right)  ^{1/4},$ we acquire
\begin{equation}
\delta_{1,\mathbb{H}}(u)=\dfrac{\lambda^{2}}{2}\int_{\mathbb{H}^{N}}%
e^{-\frac{\rho^{2}(x)}{\lambda^{2}}}\left\vert \nabla_{\mathbb{H}}\left(
ue^{\frac{\rho^{2}(x)}{2\lambda^{2}}}\right)  \right\vert ^{2}dV_{\mathbb{H}},
\label{DeficitIdt}%
\end{equation}
where the Heisenberg deficit is defined as follows
\begin{align}
\delta_{1,\mathbb{H}}(u):  &  =\left(  \int_{\mathbb{H}^{N}}|\nabla
_{\mathbb{H}}u|^{2}dV_{\mathbb{H}}\right)  ^{1/2}\left(  \int_{\mathbb{H}^{N}%
}\rho^{2}(x)|u|^{2}dV_{\mathbb{H}}\right)  ^{1/2}\nonumber\label{eq 1.2}\\
&  -\dfrac{1}{2}\int_{\mathbb{H}^{N}}\left(  N+(N-1)\dfrac{\rho(x)\cosh
\rho(x)-\sinh\rho(x)}{\sinh\rho(x)}\right)  \;|u|^{2}dV_{\mathbb{H}}.
\end{align}

\end{proof}

Following the ideas in \cite{AJLL23}, starting from \eqref{DeficitIdt}, to get
a result about the $L^{2}$-stability for the Heisenberg Uncertainty Principle,
it would be ideal to construct a Poincar\'{e} type inequality on the
hyperbolic spaces having the following form
\begin{equation}
\dfrac{\lambda^{2}}{2}\int_{\mathbb{H}^{N}}\left\vert \nabla_{\mathbb{H}%
}u\right\vert ^{2}e^{-\frac{\rho^{2}(x)}{\lambda^{2}}}dV_{\mathbb{H}}\geq
K\inf_{c}\int_{\mathbb{H}^{N}}|u-c|^{2}e^{-\frac{\rho^{2}(x)}{\lambda^{2}}%
}dV_{\mathbb{H}}, \label{L^2Pineq1}%
\end{equation}
for all $\lambda>0$ and $K$ is a positive constant independent of $\lambda$.
Unfortunately, it is challenging to get exactly such a Poincar\'{e} type
inequality. This difficulty may arise from two key factors. Firstly, the
factor relating to the scaling parameter should have the form of
$O(\lambda^{2})$. Secondly, the constant must be independent of $\lambda$ for
all $\lambda>0$. Therefore, with some modifications in the inequality, we can
get some corresponding $L^{2}$-stability results. In other words, in the next
section, we are considering to construct some weighted Poicar\'{e}
inequalities of the form
\begin{equation}
K\int_{\mathbb{H}^{N}}\left\vert \nabla_{\mathbb{H}}u\right\vert
^{2}M_{\lambda}dV_{\mathbb{H}}\geq\inf_{c}\int_{\mathbb{H}^{N}}|u-c|^{2}%
N_{\lambda}dV_{\mathbb{H}}, \label{L^2Pgnrine}%
\end{equation}
where $K=O(\lambda^{2})$, and $M_{\lambda}$, $N_{\lambda}$ are potentials
defined on $\mathbb{H}^{N}$.


\section{Scale-Dependent Poincar\'{e} inequalities with Gaussian type measures
on the hyperbolic space-Proofs of Theorems \ref{1stineq}, \ref{2ndineq} and
\ref{3rdineq}}

\label{sct3}

\subsection{The first weighted Poincar\'{e} inequality}

\label{subsct3.1} For this case, we are working on \eqref{L^2Pgnrine} with
$M_{\lambda}=e^{-\frac{\rho^{2}(x)}{\lambda^{2}}}$ and $N_{\lambda}=\left(
1-\tanh^{2}\left(  \dfrac{\rho(x)}{2}\right)  \right)  ^{N}\cdot
e^{-\frac{\rho^{2}(x)}{\lambda^{2}}}$. The following lemma will be useful in
the proof of Theorem \ref{1stineq}:

\begin{lemma}
\label{lemma1stineq} Let $N\geq2$ and $B(0,1)$ be the unit ball centered at
the origin on $\mathbb{R}^{N}.$ For all $u\in C_{0}^{\infty}(B(0,1))$ there
holds
\[
\int_{B(0,1)}\left\vert \nabla u\right\vert ^{2}e^{-\frac{\rho^{2}(x)}%
{\lambda^{2}}}dx\geq\dfrac{8}{\lambda^{2}}\inf_{c}\int_{B(0,1)}|u-c|^{2}%
e^{-\frac{\rho^{2}(x)}{\lambda^{2}}}dx,
\]
for all $\lambda>0$. Moreover, the constant $8$ is sharp.
\end{lemma}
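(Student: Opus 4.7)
The plan is to identify the inequality as a weighted Poincar\'e inequality for the log-concave probability measure $d\mu_\lambda = Z_\lambda^{-1} e^{-V(x)}\,dx$ on $B(0,1)$, where $V(x):=\rho^2(x)/\lambda^2$, and to deduce it from the Brascamp--Lieb inequality (equivalently, the Bakry--\'Emery $CD(\kappa,\infty)$ criterion that the paper announces for this subsection). Since $V\to +\infty$ at $\partial B(0,1)$, the density extends to a log-concave probability density on $\mathbb{R}^N$, and Brascamp--Lieb yields, for any $u\in C_0^\infty(B(0,1))$,
\begin{equation*}
\int_{B(0,1)} |\nabla u|^2 e^{-V}\,dx \,\geq\, \kappa\, \inf_{c\in\mathbb{R}} \int_{B(0,1)} |u-c|^2 e^{-V}\,dx,
\end{equation*}
provided $\mathrm{Hess}\,V\succeq \kappa\, I$ on $B(0,1)$. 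Hence the task reduces to showing $\mathrm{Hess}\,V\succeq (8/\lambda^2) I$ pointwise on $B(0,1)$, and to producing test functions that certify the sharpness of the numerical constant~$8$.

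Since $V(x)=\phi(|x|)$ with $\phi(r)=\rho(r)^2/\lambda^2$ and $\rho(r)=\log\tfrac{1+r}{1-r}$, the Hessian is diagonal in radial/tangential coordinates, with eigenvalues $\phi''(r)$ (once) and $\phi'(r)/r$ (with multiplicity $N-1$). Using $\rho'(r)=2/(1-r^2)$, a short computation gives
\begin{equation*}
\phi''(r) \,=\, \frac{8}{\lambda^2}\cdot\frac{1+r\,\rho(r)}{(1-r^2)^2}, \qquad \frac{\phi'(r)}{r} \,=\, \frac{8}{\lambda^2}\cdot \frac{\rho(r)}{2r(1-r^2)}.
\end{equation*}
The desired pointwise lower bound thus reduces to the two elementary inequalities $1+r\rho(r)\geq (1-r^2)^2$ and $\rho(r)\geq 2r(1-r^2)$ on $(0,1)$. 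The first is trivial since its left side is $\geq 1$ and its right side is $\leq 1$. For the second I set $h(r):=\rho(r)-2r(1-r^2)$ and compute $h(0)=0$ and $h'(r)=\tfrac{2r^2(4-3r^2)}{1-r^2}>0$ on $(0,1)$, whence $h>0$.

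For sharpness of the constant $8$, the key observation is that both eigenvalues $\phi''(r)$ and $\phi'(r)/r$ equal exactly $8/\lambda^2$ at $r=0$, so near the origin $V(x)=4|x|^2/\lambda^2+O(|x|^4/\lambda^2)$ and the weight locally resembles the Gaussian of covariance $(\lambda^2/8)\,I$, whose optimal Poincar\'e constant is precisely $8/\lambda^2$ (attained by $u(x)=x_1$). I would therefore test against a family $u_\varepsilon(x)=x_1\,\eta(x/\varepsilon)$, where $\eta\in C_c^\infty(\mathbb{R}^N)$ is a bump equal to $1$ near $0$, rescale $y=x/\varepsilon$, and use the local Gaussian approximation to show that the Rayleigh quotient
\begin{equation*}
\frac{\int |\nabla u_\varepsilon|^2 e^{-V}\,dx}{\inf_c \int |u_\varepsilon-c|^2 e^{-V}\,dx}
\end{equation*}
tends to $8/\lambda^2$ as $\varepsilon\to 0$. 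The odd symmetry $u_\varepsilon(-x)=-u_\varepsilon(x)$ combined with the radial symmetry of the weight forces the optimal $c$ to be $0$, which simplifies the limit.

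The main obstacle I anticipate is the sharpness step: one must control the non-Gaussian corrections $O(|x|^4)$ in $\rho^2(x)$ together with the truncation error from $\eta$, and verify that both are lower order compared to the leading Gaussian contributions after rescaling. By contrast, the pointwise Hessian bound and the invocation of Brascamp--Lieb are entirely routine.
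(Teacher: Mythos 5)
Your proof of the inequality itself is correct and follows essentially the same route as the paper: both reduce the statement to the pointwise Hessian bound $\mathrm{Hess}\left(\rho^{2}(x)/\lambda^{2}\right)\succeq(8/\lambda^{2})\,\mathrm{Id}$ on $B(0,1)$ and then invoke the Bakry--\'Emery criterion (the paper cites \cite[Corollary 4.8.2]{BGL14}; Brascamp--Lieb gives the same conclusion). Your verification of the Hessian bound via the radial eigenvalue decomposition --- $\phi''(r)$ once and $\phi'(r)/r$ with multiplicity $N-1$, reduced to the elementary inequalities $1+r\rho(r)\geq(1-r^{2})^{2}$ and $\rho(r)\geq 2r(1-r^{2})$ --- is cleaner than the paper's computation, which writes the matrix as $P\delta_{ij}+Qx_{i}x_{j}$, evaluates its leading principal minors and appeals to Sylvester's criterion; the two computations agree (the paper's $P+|x|^{2}Q$ is exactly your radial eigenvalue $\lambda^{2}\phi''(r)$, and its $P$ is $\lambda^{2}\phi'(r)/r$ up to the stated normalization).

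The sharpness step, however, has a genuine gap. With $\lambda$ fixed and $\varepsilon\to0$, the family $u_{\varepsilon}(x)=x_{1}\eta(x/\varepsilon)$ gives numerator $\int|\nabla u_{\varepsilon}|^{2}e^{-V}dx\sim\varepsilon^{N}$ and denominator $\int u_{\varepsilon}^{2}e^{-V}dx\sim\varepsilon^{N+2}$ (the weight tends to $e^{-V(0)}=1$ on the support), so the Rayleigh quotient blows up like $\varepsilon^{-2}$ instead of converging to $8/\lambda^{2}$: shrinking the support of the test function below the length scale of the measure turns the problem into a small-ball eigenvalue problem, not a Gaussian one. The Gaussian optimizer $x_{1}$ must be spread over the scale of the measure itself, which is $\lambda$. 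The correct localization is therefore in $\lambda$, not in an independent parameter $\varepsilon$: take $u=x_{1}\eta(x)$ with a \emph{fixed} cutoff and let $\lambda\to0$; the measure then concentrates where $V(x)=4|x|^{2}/\lambda^{2}+O(|x|^{4}/\lambda^{2})$, and after rescaling $x=\lambda y$ the quotient converges to the Gaussian Poincar\'e constant, i.e.\ equals $\frac{8}{\lambda^{2}}(1+o(1))$. This shows that $8$ cannot be replaced by any larger $\lambda$-independent constant, which is the relevant meaning of sharpness here (for fixed large $\lambda$ the inequality is far from optimal). For what it is worth, the paper's own sharpness discussion only establishes that $8$ is the best constant in the Hessian lower bound --- i.e.\ that the Bakry--\'Emery criterion cannot yield more --- so a test-function argument of the corrected form is what actually certifies sharpness of the Poincar\'e constant.
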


\begin{proof}
From \cite[Corollary 4.8.2]{BGL14}, setting $W=\frac{\rho^{2}(x)}{\lambda^{2}%
}$, it remains to verify that there exists a constant $C>0$ such that
\[
\text{Hess}(W)\geq\frac{C}{\lambda^{2}}\text{Id}.
\]
By some simple computations, we obtain, for all $1\leq i\leq N$,
\begin{align*}
\dfrac{\partial}{\partial x_{i}}\left(  \ln^{2}\left(  \dfrac{1+|x|}%
{1-|x|}\right)  \right)   &  =2\ln\dfrac{1+|x|}{1-|x|}.\dfrac{\partial
}{\partial x_{i}}\left(  \ln\dfrac{1+|x|}{1-|x|}\right) \\
&  =4\ln\dfrac{1+|x|}{1-|x|}\dfrac{x_{i}}{|x|(1-|x|^{2})},
\end{align*}
which implies that
\[
\dfrac{\partial^{2}}{\partial x_{i}\partial x_{j}}\left(  \ln^{2}\left(
\dfrac{1+|x|}{1-|x|}\right)  \right)  =\dfrac{8x_{i}x_{j}}{|x|^{2}%
(1-|x|^{2})^{2}}+4\ln\dfrac{1+|x|}{1-|x|}\dfrac{3x_{i}x_{j}|x|^{2}-x_{i}x_{j}%
}{|x|^{3}(1-|x|^{2})^{2}}:=H_{ij},
\]
if $1\leq i\neq j\leq N$, and
\[
\dfrac{\partial^{2}}{\partial x_{i}^{2}}\left(  \ln^{2}\left(  \dfrac
{1+|x|}{1-|x|}\right)  \right)  =\dfrac{8x_{i}^{2}}{|x|^{2}(1-|x|^{2})^{2}%
}+4\ln\dfrac{1+|x|}{1-|x|}\dfrac{3x_{i}^{2}|x|^{2}-x_{i}^{2}+|x|^{2}-|x|^{4}%
}{|x|^{3}(1-|x|^{2})^{2}}:=H_{ii},
\]
for all $1\leq i\leq N$. Let $H=[H_{ij}]$. Therefore, we would like to find
$C>0$ such that
\[
\text{Hess}(W)=\dfrac{1}{\lambda^{2}}H\geq\dfrac{C}{\lambda^{2}}\text{Id},
\]
which is equivalent to
\[
H-C\text{Id}\geq0.
\]
Next, we will calculate the determinant of the matrix $H-C\text{Id}$. Indeed,
we can represent entries of $(H-C\text{Id})$ in the form of
\[
\left[  \left(  4\ln\left(  \dfrac{1+|x|}{1-|x|}\right)  \dfrac{1}%
{|x|-|x|^{3}}-C\right)  \delta_{ij}+\left(  \dfrac{8}{|x|^{2}(1-|x|^{2})^{2}%
}+4\ln\dfrac{1+|x|}{1-|x|}\dfrac{3|x|^{2}-1}{|x|^{3}(1-|x|^{2})^{2}}\right)
x_{i}x_{j}\right]  .
\]
For convenience, we denote
\[
P:=4\ln\left(  \dfrac{1+|x|}{1-|x|}\right)  \dfrac{1}{|x|-|x|^{3}}-C,
\]
and
\[
Q:=\dfrac{8}{|x|^{2}(1-|x|^{2})^{2}}+4\ln\dfrac{1+|x|}{1-|x|}\dfrac
{3|x|^{2}-1}{|x|^{3}(1-|x|^{2})^{2}}.
\]
Then, we can write the $i^{th}$-row of the matrix $H-C\text{Id}$ as a vector
in $\mathbb{R}^{N}$, i.e
\[
\lbrack H-C\text{Id}]_{i}=(0,0,\ldots,\underset{i^{th}}{\underbrace{P}}%
,\ldots,0)+Qx_{i}(x_{1},x_{2},\ldots,x_{N}).
\]
As a result, we have
\[
\text{det}(H-C\text{Id})=P^{N}+P^{N-1}Q\sum_{i=1}^{N}x_{i}^{2}=P^{N}%
+P^{N-1}Q|x|^{2}=P^{N-1}(P+|x|^{2}Q).
\]
Firstly, we would like to determine the best $C$ such that $H-C\text{Id}>0$.
Using Sylvester's criterion for positive definite matrices that states that
the determinants of all leading principal minors are positive, we have
$P^{k}+P^{k-1}Q\sum_{i=1}^{k}x_{i}^{2}>0$ for all $k=\overline{1,N}$, which
implies that $P^{k-1}(P+|x|^{2}Q)>0$ for all $k=\overline{1,N}$. Taking $k=1$
and $k=2$, we have $P$ and $P+|x|^{2}Q$ are positive or
\begin{align*}
C  &  <\min_{x\in B(0,1)\setminus\{0\}}\left\{  4\ln\left(  \dfrac
{1+|x|}{1-|x|}\right)  \dfrac{1}{|x|-|x|^{3}},\dfrac{8}{(1-|x|^{2})^{2}}%
+8\ln\dfrac{1+|x|}{1-|x|}\dfrac{|x|}{(1-|x|^{2})^{2}}\right\} \\
&  <\min_{x\in B(0,1)\setminus\{0\}}\left\{  4\ln\left(  \dfrac{1+|x|}%
{1-|x|}\right)  \dfrac{1}{|x|-|x|^{3}}\right\}
\end{align*}
Thus, $C\leq8$. We claim that $C=8$ is the best constant. Indeed, for any
$C>8$, if $N$ is odd, we can choose $x\in B(0,1)\setminus\{0\}$, such that $P$
and $P+|x|^{2}Q$ are negative. Then $P^{N-1}(P+|x|^{2}Q)<0$, a contradiction.
Otherwise, if $N$ is even, we take $x\in B(0,1)\setminus\{0\}$ satisfying
$P<0<P+|x|^{2}Q$, which follows us $P^{N-1}(P+|x|^{2}Q)<0$, also a
contradiction. Therefore, $C=8$ is the best constant so that $P^{k-1}%
(P+|x|^{2}Q)>0$ for all $k=\overline{1,N}$. Obviously, when $C=8$,
$P^{k}+P^{k-1}Q\sum_{i=1}^{k}x_{i}^{2}>0$ for all $k=\overline{1,N}$, since
$Q$ is positive.\newline Next, we will determine $C$ such that $H-C\text{Id}%
\geq0$. From above, $C=8$ satisfies this requirement. We claim that any $C>8$
will not satisfy the condition. Indeed, using the same arguments as above, for
any $C>8$, we can choose $x\in B(0,1)\setminus\{0\}$ such that $\text{det}%
(H-C\text{Id})=P^{N-1}(P+|x|^{2}Q)<0$, which contradicts to the version of
Sylvester's criterion for positive semidefinite matrices. Therefore, the best
constant is $C=8$.
\end{proof}

Now, we are ready to give the proof of Theorem \ref{1stineq}.

\begin{proof}
[Proof of Theorem \ref{1stineq}]Rewriting \eqref{fixedL^2Pineq} on the
Euclidean space, it is equivalent to prove
\begin{equation}
\dfrac{\lambda^{2}}{2}\int_{B(0,1)}\left\vert \nabla u\right\vert ^{2}\left(
\dfrac{2}{1-|x|^{2}}\right)  ^{N-2}e^{-\frac{\rho^{2}(x)}{\lambda^{2}}}dx\geq
K\inf_{c}\int_{B(0,1)}|u-c|^{2}e^{-\frac{\rho^{2}(x)}{\lambda^{2}}}dx,
\label{L2PineqRn}%
\end{equation}
where $\nabla$ denotes the Euclidean gradient, and $K$ is independent of
$\lambda$, since
\[
dV_{\mathbb{H}}=\dfrac{2^{N}}{(1-|x|^{2})^{N}}dx\;\;\text{and}\;\;\left\vert
\nabla_{\mathbb{H}}(u)\right\vert ^{2}=\left(  \dfrac{1-|x|^{2}}{2}\right)
^{2}\left\vert \nabla u\right\vert ^{2}.
\]
By a rough estimate, we get
\[
LHS_{\eqref{L2PineqRn}}\geq\dfrac{\lambda^{2}}{2^{3-N}}\int_{B(0,1)}\left\vert
\nabla u\right\vert ^{2}e^{-\frac{\rho^{2}(x)}{\lambda^{2}}}dx.
\]
As a result, \eqref{L2PineqRn} follows directly from Lemma \ref{lemma1stineq}.
Hence, the proof is complete.
\end{proof}

\subsection{The second weighted Poincar\'{e} inequality}

\label{subsct3.2}Return to the estimate \eqref{L^2Pineq1}, it seems quite
challenging to require the existence of a universal constant $K$ for all
$\lambda>0$. In this case, we will restrict ourselves to work with $\lambda$
bounded above by a finite number. In other words, we will prove the inequality
\eqref{L^2Pgnrine} with $M_{\lambda}=N_{\lambda}=e^{-\frac{\rho^{2}%
(x)}{\lambda^{2}}}$, and $K=O(\lambda^{2})$ for all $\lambda$ bounded above by
a given constant.

Before giving the proof of Theorem \ref{2ndineq}, we recall a theorem in
\cite{BGL14} stating a Poincar\'{e} inequality for general Riemannian
manifolds, i.e

\begin{lemma}
\label{lem 2.2} Let $(M,g)$ be a Riemannian manifold with the measure $\mu$
having density $e^{-W}$ such that
\[
\text{Ric}_{g}+\text{Hess}W\geq\rho g,
\]
with $\rho>0$, then we have
\[
\int_{M}|\nabla_{g}u|^{2}e^{-W}d\mu\geq\rho\inf_{c}\int|u-c|^{2}e^{-W}d\mu.
\]

\end{lemma}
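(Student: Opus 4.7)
The plan is to derive this spectral gap estimate via the Bakry--\'{E}mery $\Gamma_2$ calculus applied to the Witten Laplacian $L = \Delta_g - \langle \nabla_g W, \nabla_g \cdot \rangle$, which is the symmetric diffusion generator determined by the integration by parts formula $\int_M (L u)\, v \, d\mu = -\int_M \langle \nabla_g u, \nabla_g v \rangle \, d\mu$ for $u, v \in C_c^\infty(M)$. Let $P_t = e^{tL}$ denote the associated heat semigroup; under the standing hypothesis $\mathrm{Ric}_g + \mathrm{Hess}\, W \geq \rho g$ with $\rho > 0$, a standard Bakry--\'{E}mery argument shows $\mu$ is finite (so we may normalize it to a probability measure) and $P_t$ is ergodic, with $P_t u \to \bar u := \int_M u \, d\mu$ in $L^2(\mu)$ as $t \to \infty$.

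The key input is the Bochner identity adapted to the weighted Laplacian: for $u \in C_c^\infty(M)$,
\[
\Gamma_2(u) := \tfrac{1}{2} L |\nabla_g u|^2 - \langle \nabla_g u, \nabla_g L u \rangle = |\mathrm{Hess}\, u|^2 + \bigl(\mathrm{Ric}_g + \mathrm{Hess}\, W\bigr)(\nabla_g u, \nabla_g u).
\]
The curvature hypothesis then yields the pointwise curvature--dimension estimate $\Gamma_2(u) \geq \rho\, |\nabla_g u|^2$, which upon integration gives the fundamental inequality $\int_M \Gamma_2(u) \, d\mu \geq \rho \int_M |\nabla_g u|^2 \, d\mu$.

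The Poincar\'{e} inequality now follows from a standard semigroup interpolation. Put $\psi(t) := \int_M |\nabla_g P_t u|^2 \, d\mu$ and $\phi(t) := \int_M (P_t u - \bar u)^2 \, d\mu$. Differentiating along the semigroup and invoking the integrated $\Gamma_2$ bound gives $\psi'(t) \leq -2\rho\, \psi(t)$, so $\psi(t) \leq e^{-2\rho t}\psi(0)$, while symmetry of $L$ yields $\phi'(t) = -2\psi(t)$. Since $\phi(t) \to 0$ as $t \to \infty$ by ergodicity, integrating on $[0,\infty)$ produces $\phi(0) \leq \frac{1}{\rho} \int_M |\nabla_g u|^2 \, d\mu$. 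Taking $c = \bar u$ --- which realizes the minimum of $c \mapsto \int_M (u-c)^2 \, d\mu$ --- yields the stated inequality.

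The main obstacle is not the algebraic chain above, which is formal once $\Gamma_2 \geq \rho\, \Gamma$ is granted, but rather the functional-analytic justification of the $\Gamma_2$ manipulations on a possibly non-compact $(M,g)$: one needs essential self-adjointness of $L$ on a suitable core, validity of the semigroup formulas for $P_t u$ starting from $C_c^\infty$ data, and ergodic convergence $P_t u \to \bar u$. These are handled by approximation and density arguments standard in the Bakry--\'{E}mery--Ledoux framework, and all of the ingredients are available in the weighted-manifold setting that will be applied to $\mathbb{H}^N$ with the Gaussian-type weights $W = \rho^2/\lambda^2$ used in the subsequent sections.
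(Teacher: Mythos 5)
Your proposal is correct: it is the standard Bakry--\'Emery semigroup argument (Bochner identity, $\Gamma_2\geq\rho\,\Gamma$, exponential decay of $\int|\nabla_g P_tu|^2\,d\mu$, integration in $t$), which is precisely the proof the paper implicitly invokes, since the paper does not prove this lemma but simply quotes it from Bakry--Gentil--Ledoux. The only cosmetic point worth noting is that the lemma as printed double-counts the weight (it writes $e^{-W}\,d\mu$ with $d\mu$ already having density $e^{-W}$); your reading of the intended statement, the weighted Poincar\'e inequality for $d\mu=e^{-W}\,dV_g$, is the right one.
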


Now, we are in the position to give the proof of Theorem \ref{2ndineq}.

\begin{proof}
[Proof of Theorem \ref{2ndineq}]To get our desired estimate, we will split the
interval $(0,\beta]$ into two subintervals:\newline\textbf{Case 1:}
$\mathbf{0<\lambda<\alpha:=\sqrt{\dfrac{2-\epsilon_{0}}{N-1}}\;\text{for
some}\;0<\epsilon_{0}<2.}$ Let us consider the hyperbolic space $\mathbb{H}%
^{N}$ with the Poincar\'{e} ball model. Then, the Hessian matrix of
$\mathbb{H}^{N}$ is given by
\[
(D^{2}u)_{ij}=\dfrac{\partial^{2}u}{\partial x_{i}\partial x_{j}}-\sum
_{k=1}^{N}\Gamma_{ij}^{k}(x)\dfrac{\partial u}{\partial x_{k}},
\]
where
\[
\Gamma_{ij}^{k}(x)=\dfrac{2(\delta_{jk}x_{i}+\delta_{ki}x_{j}-\delta_{ij}%
x_{k})}{1-|x|^{2}}.
\]
Then, for $1\leq i=j\leq N$
\[
(D^{2}u)_{ii}=\dfrac{\partial^{2}u}{\partial x_{i}^{2}}-\dfrac{4}{1-|x|^{2}%
}x_{i}\dfrac{\partial u}{\partial x_{i}}+\dfrac{2}{1-|x|^{2}}\sum_{k=1}%
^{N}x_{k}\dfrac{\partial u}{\partial x_{k}},
\]
and, for $1\leq i\neq j\leq N$,
\[
(D^{2}u)_{ij}=\dfrac{\partial^{2}u}{\partial x_{i}\partial x_{j}}-\dfrac
{2}{1-|x|^{2}}x_{i}\dfrac{\partial u}{\partial x_{j}}-\dfrac{2}{1-|x|^{2}%
}x_{j}\dfrac{\partial u}{\partial x_{i}}.
\]
By taking $u=\ln\left(  \dfrac{1+|x|}{1-|x|}\right)  ^{2}$, and making use of
some computations in the previous section, it is straightforward to get
\begin{align*}
(D^{2}u)_{ii}  &  =\dfrac{8x_{i}^{2}}{|x|^{2}(1-|x|^{2})^{2}}-4\ln\left(
\dfrac{1+|x|}{1-|x|}\right)  \dfrac{x_{i}^{2}}{|x|(1-|x|^{2})^{2}}-4\ln\left(
\dfrac{1+|x|}{1-|x|}\right)  \dfrac{x_{i}^{2}}{|x|^{3}(1-|x|^{2})^{2}}\\
&  +4\ln\left(  \dfrac{1+|x|}{1-|x|}\right)  \dfrac{1}{|x|(1-|x|^{2})}%
+8\ln\left(  \dfrac{1+|x|}{1-|x|}\right)  \dfrac{|x|}{(1-|x|^{2})^{2}},
\end{align*}
and
\[
(D^{2}u)_{ij}=\dfrac{8x_{i}x_{j}}{|x|^{2}(1-|x|^{2})^{2}}-4\ln\left(
\dfrac{1+|x|}{1-|x|}\right)  \dfrac{x_{i}x_{j}}{|x|(1-|x|^{2})^{2}}%
-4\ln\left(  \dfrac{1+|x|}{1-|x|}\right)  \dfrac{x_{i}x_{j}}{|x|^{3}%
(1-|x|^{2})^{2}}.
\]
Then, in order to utilize Lemma \ref{lem 2.2}, we need to find $\rho=\frac
{A}{\lambda^{2}}>0$ such that
\[
-(N-1)g_{ij}+\dfrac{D^{2}u}{\lambda^{2}}\geq\rho g_{ij},
\]
Firstly, we want to find the best constant $C$ such that $D^{2}u-Cg_{ij}\geq
0$. By using analogous arguments as in the proof of Lemma \ref{lemma1stineq},
we will calculate the determinant of the matrix $D^{2}u-Cg_{ij}$. Indeed, we
can represent entries of $(D^{2}u-Cg_{ij})$ in the form of
\[
\left[  P\delta_{ij}+Qx_{i}x_{j}\right]  ,
\]
where
\[
P:=4\ln\left(  \dfrac{1+|x|}{1-|x|}\right)  \dfrac{1}{|x|(1-|x|^{2})}%
+8\ln\left(  \dfrac{1+|x|}{1-|x|}\right)  \dfrac{|x|}{(1-|x|^{2})^{2}}%
-C\dfrac{4}{(1-|x|^{2})^{2}},
\]
and
\[
Q:=\dfrac{8}{|x|^{2}(1-|x|^{2})^{2}}-4\ln\dfrac{1+|x|}{1-|x|}\dfrac
{1}{|x|(1-|x|^{2})^{2}}-4\ln\dfrac{1+|x|}{1-|x|}\dfrac{1}{|x|^{3}%
(1-|x|^{2})^{2}}.
\]
Then, we can write the $i^{th}$-row of the matrix $D^{2}u-Cg_{ij}$ as a vector
in $\mathbb{R}^{N}$, i.e
\[
\lbrack D^{2}u-Cg_{ij}]_{i}=(0,0,\ldots,\underset{i^{th}}{\underbrace{P}%
},\ldots,0)+Qx_{i}(x_{1},x_{2},\ldots,x_{N}).
\]
As a result, we have
\[
\text{det}(D^{2}u-Cg_{ij})=P^{N}+P^{N-1}Q\sum_{i=1}^{N}x_{i}^{2}=P^{N}%
+P^{N-1}Q|x|^{2}=P^{N-1}(P+|x|^{2}Q).
\]
Similarly, we have $C=2$ as the best constant so that $D^{2}u-Cg_{ij}\geq0$.
Therefore,
\[
\dfrac{D^{2}u}{\lambda^{2}}\geq\dfrac{2}{\lambda^{2}}g_{ij},
\]
which follows us
\[
-(N-1)g_{ij}+\dfrac{D^{2}u}{\lambda^{2}}\geq\left(  \dfrac{2}{\lambda^{2}%
}-N+1\right)  g_{ij}\geq\dfrac{A}{\lambda^{2}}g_{ij},
\]
where $A>0$. Then, by choosing $0<A=\epsilon_{0}<2$, we have
\begin{equation}
\dfrac{\lambda^{2}}{2}\int_{\mathbb{H}^{N}}\left\vert \nabla_{\mathbb{H}%
}u\right\vert ^{2}e^{-\frac{\rho^{2}(x)}{\lambda^{2}}}dV_{\mathbb{H}}%
\geq\dfrac{\epsilon_{0}}{2}\inf_{c}\int_{\mathbb{H}^{N}}|u-c|^{2}%
e^{-\frac{\rho^{2}(x)}{\lambda^{2}}}dV_{\mathbb{H}},
\end{equation}
for all $0<\lambda\leq\sqrt{\dfrac{2-\epsilon_{0}}{N-1}}=\alpha$.

\textbf{Case 2:} $\mathbf{\alpha\leq\lambda\leq\beta}$. Here, we assumed that
$\beta> \alpha$. Otherwise, we have nothing to prove. To deal with the case,
we use some results from \cite{BMM22}. Indeed, let $M=e^{-\frac{r^{2}}%
{\lambda^{2}}}$, where $r=\rho(x)$, and $v=-\log(M)=\frac{r^{2}}{\lambda^{2}}%
$. Then, $\dfrac{\partial v}{\partial r}=\dfrac{2}{\lambda^{2}}r$ and
$\dfrac{\partial^{2} v}{\partial r^{2}}=\dfrac{2}{\lambda^{2}}$, which allows
us
\[
\Delta_{\mathbb{H}}v=\dfrac{\partial^{2} v}{\partial r^{2}}+(N-1)\coth r
\dfrac{\partial v}{\partial r}=\dfrac{2}{\lambda^{2}}+\dfrac{2(N-1)}%
{\lambda^{2}}r\coth r,
\]
and
\[
a|\nabla_{\mathbb{H}}v|^{2}-\Delta_{\mathbb{H}}v=\dfrac{4a}{\lambda^{4}}%
r^{2}-\dfrac{2}{\lambda^{2}}-\dfrac{2(N-1)}{\lambda^{2}}r \coth r.
\]
For a fixed $a\in(0, 1)$, since $\lambda\in[\alpha, \beta]$, there exists a
universal $R_{0}>0$ such that
\[
\dfrac{4a}{\lambda^{4}}r^{2}-\dfrac{2}{\lambda^{2}}-\dfrac{2(N-1)}{\lambda
^{2}}r \coth r \geq C >0,
\]
for all $r>R_{0}$, $\lambda\in[\alpha, \beta]$. Hence, by using [Corollary
$4.2$. \cite{BMM22}], there exists a function $W \geq1$ such that
\[
-\Delta_{M, \mathbb{H}}W (x)\geq\theta W(x)-b\mathbf{1}_{B_{R_{0}}}(x),
\]
for all $x \in\mathbb{H}^{N}$, $\lambda\in[\alpha, \beta]$, where $\theta>0$
and $b\geq0$ depend on only $\alpha$ and $\beta$, and
\[
-\Delta_{M, \mathbb{H}}:=-\Delta_{\mathbb{H}}-\nabla_{\mathbb{H}}(\log
M)\cdot\nabla_{\mathbb{H}}.
\]
As a result, we get
\[
\int_{\mathbb{H}^{N}} |u|^{2}M dV_{\mathbb{H}} \leq\int_{\mathbb{H}^{N}}%
\dfrac{-\Delta_{M, \mathbb{H}}W}{\theta W}|u|^{2}MdV_{\mathbb{H}}%
+b\int_{B_{R_{0}}} \dfrac{|u|^{2}}{\theta W}MdV_{\mathbb{H}}.
\]
For the first term in the RHS, we have
\begin{align*}
\int_{\mathbb{H}^{N}}\dfrac{-\Delta_{M, \mathbb{H}}W}{\theta W}|u|^{2}%
MdV_{\mathbb{H}}  &  =-\int_{\mathbb{H}^{N}}\dfrac{\Delta_{\mathbb{H}}%
W}{\theta W}|u|^{2}MdV_{\mathbb{H}}-\int_{\mathbb{H}^{N}}\dfrac{\nabla
_{\mathbb{H}}(\log M)\cdot\nabla_{\mathbb{H}}W}{\theta W}|u|^{2}%
MdV_{\mathbb{H}}\\
&  =\int_{\mathbb{H}^{N}} \nabla_{\mathbb{H}}W\cdot\nabla_{\mathbb{H}}\left(
\dfrac{|u|^{2}M}{\theta W}\right)  dV_{\mathbb{H}}-\int_{\mathbb{H}^{N}}%
\dfrac{\nabla_{\mathbb{H}}(\log M)\cdot\nabla_{\mathbb{H}}W}{\theta W}%
|u|^{2}MdV_{\mathbb{H}}\\
&  =\int_{\mathbb{H}^{N}} \nabla_{\mathbb{H}}W\cdot\nabla_{\mathbb{H}}\left(
\dfrac{|u|^{2}}{\theta W}\right)  MdV_{\mathbb{H}}\\
&  =\dfrac{2}{\theta}\int_{\mathbb{H}^{N}}\dfrac{u}{W}\nabla_{\mathbb{H}}u
\cdot\nabla_{\mathbb{H}}W M dV_{\mathbb{H}}-\dfrac{1}{\theta}\int
_{\mathbb{H}^{N}}\dfrac{|u|^{2}|\nabla_{\mathbb{H}}W|^{2}}{W^{2}}
MdV_{\mathbb{H}}\\
&  =\dfrac{1}{\theta}\left[  \int_{\mathbb{H}^{N}}|\nabla_{\mathbb{H}}%
u|^{2}MdV_{\mathbb{H}}-\int_{\mathbb{H}^{N}}\left|  \nabla_{\mathbb{H}%
}u-\dfrac{u}{W}\nabla_{\mathbb{H}}W\right|  ^{2}MdV_{\mathbb{H}}\right] \\
&  \leq\dfrac{1}{\theta} \int_{\mathbb{H}^{N}}|\nabla_{\mathbb{H}}%
u|^{2}MdV_{\mathbb{H}}.
\end{align*}
Regarding the second term, exploiting $M=e^{-\frac{r^{2}}{\lambda^{2}}}\leq1,$
we get,
\begin{align*}
b\int_{B_{R_{0}}} \dfrac{|u|^{2}}{\theta W}MdV_{\mathbb{H}}  &  \leq C(b,
\theta)\int_{B_{R_{0}}}|u|^{2}MdV_{\mathbb{H}} \leq C(b, \theta)
\int_{B_{R_{0}}}|u|^{2}dV_{\mathbb{H}}\\
&  \leq C(b, \theta, R_{0}) \int_{B_{R_{0}}}|\nabla_{\mathbb{H}}%
u|^{2}dV_{\mathbb{H}}\;(\text{here, we assume that $\int_{B_{R_{0}}}u=0$})\\
&  \leq C(b, \theta, R_{0}) e^{\frac{R_{0}^{2}}{\alpha^{2}}}\int_{B_{R_{0}}%
}|\nabla_{\mathbb{H}}u|^{2}MdV_{\mathbb{H}} \leq C(b, \theta, R_{0}, \alpha)
\int_{\mathbb{H}^{N}}|\nabla_{\mathbb{H}}u|^{2}MdV_{\mathbb{H}}.
\end{align*}
Therefore,
\begin{align*}
\int_{\mathbb{H}^{N}}|u|^{2}MdV_{\mathbb{H}}  &  \leq\left(  \dfrac{1}{\theta
}+C(b, \theta, R_{0}, \alpha)\right)  \int_{\mathbb{H}^{N}}|\nabla
_{\mathbb{H}}u|^{2}MdV_{\mathbb{H}} \leq C(\alpha, \beta)\int_{\mathbb{H}^{N}%
}|\nabla_{\mathbb{H}}u|^{2}MdV_{\mathbb{H}}\\
&  \leq\dfrac{ C(\alpha, \beta)}{\alpha^{2}}\alpha^{2}\int_{\mathbb{H}^{N}%
}|\nabla_{\mathbb{H}}u|^{2}MdV_{\mathbb{H}} \leq C(\alpha, \beta)\lambda^{2}
\int_{\mathbb{H}^{N}}|\nabla_{\mathbb{H}}u|^{2}MdV_{\mathbb{H}},
\end{align*}
for all $\lambda\in[\alpha, \beta]$. It follows that \eqref{2ndcaseineq} holds
for all $\lambda\in[\alpha, \beta]$, where $K$ depends on $\alpha$ and $\beta$.

To sum up, by combining two cases, we can choose a constant $K$, depending on
$\beta$ only, for \eqref{2ndcaseineq} with $\lambda\in(0, \beta]$, which
completes the proof.
\end{proof}

\subsection{The third weighted Poincar\'{e} inequality}

\label{subsct3.3} In the previous section, we have proved \eqref{L^2Pineq1}
with $\lambda$ close to the origin. However, the same method does not work for
all $\lambda>0$. We want to deal with the case when $\lambda$ is large enough
by considering a different couple of $(M_{\lambda},N_{\lambda})$ in
\eqref{L^2Pgnrine}. Indeed, we consider
\[
M_{\lambda}=N_{\lambda}=e^{-\left(  \frac{r}{\sinh r}\right)  ^{\frac
{(N-1)}{2}}\frac{r^{2}}{\lambda^{2}}}%
\]
with $r:=\rho(x)$ for short.

We begin with the following lemma:

\begin{lemma}
Given a positive density function $M$, we have
\[
\int_{\mathbb{H}^{N}}|\nabla u|^{2}MdV_{\mathbb{H}}\geq\int_{\mathbb{H}^{N}%
}\left(  \dfrac{(N-1)^{2}}{4}+\dfrac{1}{4}|\nabla_{\mathbb{H}}v|^{2}-\dfrac
{1}{2}\Delta_{\mathbb{H}}v\right)  |u|^{2}MdV_{\mathbb{{H}}},
\]
where $v:=-\log M$.
\end{lemma}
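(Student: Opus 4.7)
The natural approach is a ground-state substitution that converts the weighted Dirichlet form on the left-hand side into an unweighted Dirichlet form plus a zeroth-order potential, after which the classical Poincar\'{e} (spectral gap) inequality \eqref{classicPoin} on $\mathbb{H}^{N}$ closes the estimate.

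Concretely, I would set $w := u\,M^{1/2} = u\,e^{-v/2}$, so that $u = w\,e^{v/2}$. A direct computation gives
\[
\nabla_{\mathbb{H}}u = e^{v/2}\nabla_{\mathbb{H}}w + \tfrac{1}{2}\,w\,e^{v/2}\nabla_{\mathbb{H}}v,
\]
and hence, using $M = e^{-v}$,
\[
|\nabla_{\mathbb{H}}u|^{2}\,M = |\nabla_{\mathbb{H}}w|^{2} + w\,\langle \nabla_{\mathbb{H}}w, \nabla_{\mathbb{H}}v\rangle + \tfrac{1}{4}\,w^{2}\,|\nabla_{\mathbb{H}}v|^{2}.
\]
Integrating and handling the cross term via the divergence theorem (Theorem A),
\[
\int_{\mathbb{H}^{N}} w\,\langle \nabla_{\mathbb{H}}w, \nabla_{\mathbb{H}}v\rangle\,dV_{\mathbb{H}} = \tfrac{1}{2}\int_{\mathbb{H}^{N}}\langle \nabla_{\mathbb{H}}(w^{2}), \nabla_{\mathbb{H}}v\rangle\,dV_{\mathbb{H}} = -\tfrac{1}{2}\int_{\mathbb{H}^{N}} w^{2}\,\Delta_{\mathbb{H}}v\,dV_{\mathbb{H}},
\]
which yields the identity
\[
\int_{\mathbb{H}^{N}} |\nabla_{\mathbb{H}}u|^{2}\,M\,dV_{\mathbb{H}} = \int_{\mathbb{H}^{N}} |\nabla_{\mathbb{H}}w|^{2}\,dV_{\mathbb{H}} + \int_{\mathbb{H}^{N}}\left(\tfrac{1}{4}|\nabla_{\mathbb{H}}v|^{2} - \tfrac{1}{2}\Delta_{\mathbb{H}}v\right) w^{2}\,dV_{\mathbb{H}}.
\]

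To finish, I would apply the sharp Poincar\'{e} inequality \eqref{classicPoin}, $\int_{\mathbb{H}^{N}}|\nabla_{\mathbb{H}}w|^{2}\,dV_{\mathbb{H}} \geq \tfrac{(N-1)^{2}}{4}\int_{\mathbb{H}^{N}} w^{2}\,dV_{\mathbb{H}}$, to the first term on the right, and then undo the substitution using $w^{2} = u^{2} M$ in all remaining integrals. Combining these two steps delivers exactly the claimed inequality.

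The only mild subtlety is justifying the integration by parts for the class of admissible $u$ (and positive weights $M$) being considered; this is routine provided $u$ has enough decay and $M, v$ are smooth (as is the case for the weights $M = e^{-\,(r/\sinh r)^{(N-1)/2}\,r^{2}/\lambda^{2}}$ used in Subsection \ref{subsct3.3}), and can be handled by a standard truncation/approximation argument on compactly supported $u$ before passing to the limit. No other step requires new ideas; the whole lemma is essentially the Agmon/ground-state transform combined with the hyperbolic spectral gap.
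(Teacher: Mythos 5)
Your proposal is correct and follows essentially the same route as the paper: the ground-state substitution $g=u\sqrt{M}$, the resulting identity $\int_{\mathbb{H}^{N}}|\nabla_{\mathbb{H}}u|^{2}M\,dV_{\mathbb{H}}=\int_{\mathbb{H}^{N}}|\nabla_{\mathbb{H}}g|^{2}dV_{\mathbb{H}}+\int_{\mathbb{H}^{N}}\bigl(\tfrac14|\nabla_{\mathbb{H}}v|^{2}-\tfrac12\Delta_{\mathbb{H}}v\bigr)|g|^{2}dV_{\mathbb{H}}$ obtained by integrating the cross term by parts, and then the spectral gap inequality \eqref{classicPoin} applied to $g$. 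The only difference is cosmetic (the paper writes $g$ where you write $w$ and does not dwell on the approximation issue, which is routine for the smooth weights used later).
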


\begin{proof}
Since $M>0$, we are able to set $g=u\sqrt{M}$. Then,
\[
\nabla_{\mathbb{H}}u=\dfrac{\nabla_{\mathbb{H}}g}{\sqrt{M}}+\dfrac{1}{2}%
\dfrac{g}{\sqrt{M}}\nabla_{\mathbb{H}}v,
\]
which in turn implies,
\[
|\nabla_{\mathbb{H}}u|^{2}=\dfrac{|\nabla_{\mathbb{H}}g|^{2}}{M}+\dfrac{1}%
{4}\dfrac{|g|^{2}}{M}|\nabla_{\mathbb{H}}v|^{2}+\dfrac{g\nabla_{\mathbb{H}%
}g\nabla_{\mathbb{H}}v}{M}.
\]
As a result,
\begin{align*}
\int_{\mathbb{H}^{N}}\left\vert \nabla_{\mathbb{H}}u\right\vert ^{2}%
MdV_{\mathbb{H}}  &  =\int_{\mathbb{H}^{N}}\left(  |\nabla_{\mathbb{H}}%
g|^{2}+\dfrac{1}{4}|g|^{2}|\nabla_{\mathbb{H}}v|^{2}+\dfrac{1}{2}%
\nabla_{\mathbb{H}}(g^{2})\nabla_{\mathbb{H}}v\right)  dV_{\mathbb{H}}\\
&  =\int_{\mathbb{H}^{N}}|\nabla_{\mathbb{H}}g|^{2}dV_{\mathbb{H}}%
+\int_{\mathbb{H}^{N}}\left(  \dfrac{1}{4}|\nabla_{\mathbb{H}}v|^{2}-\dfrac
{1}{2}\Delta_{\mathbb{H}}v\right)  |g|^{2}dV_{\mathbb{H}}\\
&  \geq\dfrac{(N-1)^{2}}{4}\int_{\mathbb{H}^{N}}|g|^{2}dV_{\mathbb{H}}%
+\int_{\mathbb{H}^{N}}\left(  \dfrac{1}{4}|\nabla_{\mathbb{H}}v|^{2}-\dfrac
{1}{2}\Delta_{\mathbb{H}}v\right)  |g|^{2}dV_{\mathbb{H}},
\end{align*}
which gives us our results.
\end{proof}

The above simple lemma would be extremely useful if we can find out a
potential $M$ such that
\[
\lambda^{2}\left(  \dfrac{(N-1)^{2}}{4}+\dfrac{1}{4}|\nabla_{\mathbb{H}}
v(x)|^{2}-\dfrac{1}{2}\Delta_{\mathbb{H}} v(x)\right)  \geq K,
\]
for all $\lambda$ large enough, where $v=-\log M$ the constant $K>0$ is
independent of $\lambda$ and $x \in\mathbb{H}^{N}$.

\begin{proof}
[Proof of Theorem \ref{3rdineq}]We will work with $M_{\lambda}=e^{-\left(
\frac{r}{\sinh r}\right)  ^{\frac{(N-1)}{2}}\frac{r^{2}}{\lambda^{2}}}$ with
$r=\rho(x)$. Indeed, as above, setting
\[
v(r)=-\log M_{\lambda}=\left(  \frac{r}{\sinh r}\right)  ^{\frac{(N-1)}{2}%
}\cdot\frac{r^{2}}{\lambda^{2}}.
\]
By computing directly, we get
\[
\dfrac{\partial v}{\partial r}=\dfrac{N+3}{2\lambda^{2}}\dfrac{r^{\frac
{(N+1)}{2}}}{(\sinh r)^{\frac{(N-1)}{2}}}-\dfrac{N-1}{2\lambda^{2}}%
\dfrac{r^{\frac{(N+3)}{2}}\cosh r}{(\sinh r)^{\frac{(N+1)}{2}}},
\]
and
\begin{align*}
\dfrac{\partial^{2}v}{\partial r^{2}}  &  =\dfrac{(N+1)(N+3)}{4\lambda^{2}%
}\left(  \dfrac{r}{\sinh r}\right)  ^{\frac{(N-1)}{2}}-\dfrac{(N-1)(N+3)}%
{2\lambda^{2}}\left(  \dfrac{r}{\sinh r}\right)  ^{\frac{(N+1)}{2}}\cosh r\\
&  -\dfrac{N-1}{2\lambda^{2}}\dfrac{r^{\frac{(N+3)}{2}}}{(\sinh r)^{\frac
{(N-1)}{2}}}+\dfrac{(N-1)(N+1)}{4\lambda^{2}}\left(  \dfrac{r}{\sinh
r}\right)  ^{\frac{(N+3)}{2}}(\cosh r)^{2}.
\end{align*}
Thus,
\begin{align*}
\dfrac{1}{4}|\nabla_{\mathbb{H}}v|^{2}-\dfrac{1}{2}\Delta_{\mathbb{H}}v  &
=\dfrac{1}{4}\left(  \dfrac{\partial v}{\partial r}\right)  ^{2}-\dfrac{1}%
{2}\left(  \dfrac{\partial^{2}v}{\partial r^{2}}+(N-1)\coth r\dfrac{\partial
v}{\partial r}\right) \\
&  =\dfrac{(N+3)^{2}}{16\lambda^{4}}\dfrac{r^{N+1}}{(\sinh r)^{N-1}}%
+\dfrac{(N-1)^{2}}{16\lambda^{4}}\dfrac{r^{N+3}(\cosh r)^{2}}{(\sinh r)^{N+1}%
}\\
&  -\dfrac{(N-1)(N+3)}{8\lambda^{4}}\dfrac{r^{N+2}\cosh r}{(\sinh r)^{N}%
}-\dfrac{(N+1)(N+3)}{8\lambda^{2}}\left(  \dfrac{r}{\sinh r}\right)
^{\frac{(N-1)}{2}}\\
&  +\dfrac{(N-3)(N-1)}{8\lambda^{2}}\left(  \dfrac{r}{\sinh r}\right)
^{\frac{(N+3)}{2}}\cosh^{2}r+\dfrac{N-1}{4\lambda^{2}}\dfrac{r^{\frac
{(N+3)}{2}}}{(\sinh r)^{\frac{(N-1)}{2}}}.
\end{align*}
Let us define
\[
V(r):=\lambda^{2}\left(  \dfrac{(N-1)^{2}}{4}+\dfrac{1}{4}|\nabla_{\mathbb{H}%
}v|^{2}-\dfrac{1}{2}\Delta_{\mathbb{H}}v\right)  .
\]
We can see that for $\lambda$ large enough, there exist positive constants
$K,\beta$ such that
\[
V(r)\geq K
\]
for all $r\geq0$, $\lambda\geq\beta$, $N\geq2$, where $K$ is independent of
$r,\lambda$. Therefore, with the measure $M_{\lambda}$ defined above, we have
already proved that
\begin{equation}
\lambda^{2}\int_{\mathbb{H}^{N}}|\nabla_{\mathbb{H}}u|^{2}M_{\lambda
}dV_{\mathbb{{H}}}\geq K\int_{\mathbb{H}^{N}}|u|^{2}M_{\lambda}dV_{\mathbb{{H}%
}}\geq K\inf_{c}\int_{\mathbb{H}^{N}}|u-c|^{2}M_{\lambda}dV_{\mathbb{{H}}}
\label{eq 2.12}%
\end{equation}
for all $\lambda\geq\beta$, where $K>0$ is independent of $\lambda$, as desired.
\end{proof}

Theorem \ref{2ndineq} and Theorem \ref{3rdineq} can be consolidated and
expressed as a single result, as follows:

\begin{corollary}
Given $\beta>0$ defined as in Theorem \ref{3rdineq}, for any $\alpha\geq\beta
$, we have
\begin{equation}
\dfrac{\lambda^{2}}{2}\int_{\mathbb{H}^{N}}\left\vert \nabla_{\mathbb{H}%
}u\right\vert ^{2}\left(  A_{\lambda}\mathbf{1}_{\{\lambda\leq\alpha
\}}+B_{\lambda}\mathbf{1}_{\{\lambda>\alpha\}}\right)  dV_{\mathbb{H}}\geq
K\inf_{c}\int_{\mathbb{H}^{N}}|u-c|^{2}\left(  A_{\lambda}\mathbf{1}%
_{\{\lambda\leq\alpha\}}+B_{\lambda}\mathbf{1}_{\{\lambda>\alpha\}}\right)
dV_{\mathbb{H}}, \label{comb2nd&3rd}%
\end{equation}
for all $\lambda>0$, where $K=K(\alpha)>0$ is independent of $\lambda$,
$A_{\lambda}=e^{-\frac{\rho^{2}(x)}{\lambda^{2}}}$, and $B_{\lambda
}=e^{-\left(  \frac{\rho(x)}{\sinh\rho(x)}\right)  ^{\frac{(N-1)}{2}}%
\frac{\rho^{2}(x)}{\lambda^{2}}}$.
\end{corollary}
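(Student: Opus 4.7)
The plan is to prove the stated inequality by a simple case analysis on the size of $\lambda$, using the two theorems already established in Section \ref{sct3}. Since the weight $A_{\lambda}\mathbf{1}_{\{\lambda\leq\alpha\}}+B_{\lambda}\mathbf{1}_{\{\lambda>\alpha\}}$ reduces, for each fixed $\lambda>0$, to either $A_{\lambda}$ or $B_{\lambda}$, the corollary should follow by matching each regime with the appropriate scale-dependent Poincar\'e inequality and then taking the minimum of the two constants produced.

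First I would fix $\alpha\geq\beta$ (with $\beta$ the threshold from Theorem \ref{3rdineq}) and split $(0,\infty)=(0,\alpha]\cup(\alpha,\infty)$. In the regime $\lambda\in(0,\alpha]$, the indicator $\mathbf{1}_{\{\lambda>\alpha\}}$ vanishes, so the inequality to prove collapses to
\[
\frac{\lambda^{2}}{2}\int_{\mathbb{H}^{N}}|\nabla_{\mathbb{H}}u|^{2}A_{\lambda}\,dV_{\mathbb{H}}\;\geq\;K\inf_{c}\int_{\mathbb{H}^{N}}|u-c|^{2}A_{\lambda}\,dV_{\mathbb{H}},
\]
which is exactly \eqref{2ndcaseineq} of Theorem \ref{2ndineq} applied with the upper bound parameter there taken equal to $\alpha$. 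This yields a constant $K_{1}=K(\alpha)>0$ that is independent of $\lambda$ but may depend on $\alpha$.

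In the complementary regime $\lambda\in(\alpha,\infty)$, the indicator $\mathbf{1}_{\{\lambda\leq\alpha\}}$ vanishes, and the inequality becomes
\[
\frac{\lambda^{2}}{2}\int_{\mathbb{H}^{N}}|\nabla_{\mathbb{H}}u|^{2}B_{\lambda}\,dV_{\mathbb{H}}\;\geq\;K\inf_{c}\int_{\mathbb{H}^{N}}|u-c|^{2}B_{\lambda}\,dV_{\mathbb{H}}.
\]
Since $\alpha\geq\beta$, every such $\lambda$ satisfies $\lambda\geq\beta$, so Theorem \ref{3rdineq} applies directly and supplies a constant $K_{2}=K(\beta)>0$, again independent of $\lambda$.

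Finally, setting $K:=\min\{K_{1},K_{2}\}$ delivers a single constant, depending only on $\alpha$ (since $\beta$ is a fixed universal threshold), for which the combined inequality \eqref{comb2nd&3rd} holds for all $\lambda>0$. There is no substantive obstacle here: the two theorems were designed so that their ranges of validity, $\lambda\in(0,\alpha]$ and $\lambda\in[\beta,\infty)$, overlap whenever $\alpha\geq\beta$, and the only mild subtlety is to make sure the assignment of the weight at the breakpoint $\lambda=\alpha$ is consistent with the inequality used — which it is, because $\mathbf{1}_{\{\lambda\leq\alpha\}}$ includes the endpoint and Theorem \ref{2ndineq} covers $\lambda=\alpha$.
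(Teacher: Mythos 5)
Your proposal is correct and is exactly the argument the paper intends: the corollary is presented there without proof, merely as the consolidation of Theorem \ref{2ndineq} (applied with upper threshold $\alpha$ on the range $\lambda\le\alpha$) and Theorem \ref{3rdineq} (applied on $\lambda>\alpha\ge\beta$), with the final constant taken as the minimum of the two. Nothing further is needed.
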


In particular, when $\alpha$ is large enough, it is straightforward to get the
following weighted Poincar\'{e} inequality, i.e.

\begin{corollary}
\label{weightedPoinIneq}Let $\beta>0$. Then we have
\[
\int_{\mathbb{H}^{N}}\left\vert \nabla_{\mathbb{H}}u\right\vert ^{2}%
e^{-\beta\rho^{2}(x)}dV_{\mathbb{H}}\geq K\left(  \beta\right)  \inf_{c}%
\int_{\mathbb{H}^{N}}|u-c|^{2}e^{-\beta\rho^{2}(x)}dV_{\mathbb{H}},
\]
where $K>0$ is a positive constant.
\end{corollary}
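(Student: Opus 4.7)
The plan is to reduce Corollary \ref{weightedPoinIneq} to a direct rescaling of Theorem \ref{2ndineq}. Given $\beta>0$, I would set the scaling parameter $\lambda = 1/\sqrt{\beta}$, so that the Gaussian weight $e^{-\rho^{2}(x)/\lambda^{2}}$ appearing in Theorem \ref{2ndineq} coincides exactly with the target weight $e^{-\beta\rho^{2}(x)}$. The key observation is that although the constant in Theorem \ref{2ndineq} depends on the upper bound $\beta'$ chosen for $\lambda$, for the purposes of this corollary $\beta$ is fixed, so we are free to fix the bound as well.

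Concretely, I would choose $\beta' := 1/\sqrt{\beta}$ (any larger value works too), so that $\lambda = 1/\sqrt{\beta}$ lies in the admissible range $(0,\beta']$ of Theorem \ref{2ndineq}. That theorem then yields
\[
\frac{1}{2\beta}\int_{\mathbb{H}^{N}}|\nabla_{\mathbb{H}}u|^{2}\,e^{-\beta\rho^{2}(x)}\,dV_{\mathbb{H}} \;\geq\; K(\beta')\inf_{c}\int_{\mathbb{H}^{N}}|u-c|^{2}\,e^{-\beta\rho^{2}(x)}\,dV_{\mathbb{H}}.
\]
Multiplying through by $2\beta$ gives the stated inequality with the explicit positive constant $K(\beta) := 2\beta\, K(1/\sqrt{\beta})$, which depends only on $\beta$.

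As an alternative route, one could invoke the consolidated inequality \eqref{comb2nd&3rd} and pick $\alpha$ large enough that $1/\sqrt{\beta} \leq \alpha$; then $\lambda = 1/\sqrt{\beta}$ lies in the regime where the active weight is $A_{\lambda} = e^{-\rho^{2}(x)/\lambda^{2}} = e^{-\beta\rho^{2}(x)}$, and the same rescaling finishes the proof. In either approach, there is no genuine obstacle beyond bookkeeping: the entire content of the corollary is that Theorem \ref{2ndineq} (a statement about a one-parameter family of weighted Poincar\'e inequalities on $\mathbb{H}^{N}$) specializes, upon fixing $\lambda$ appropriately, to a scale-free weighted Poincar\'e inequality with Gaussian measure of any given variance. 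The only mild point of care is to confirm that the resulting constant $K(\beta)$ is truly a finite positive number, which is guaranteed because $K(\beta')$ in Theorem \ref{2ndineq} is finite and positive for every finite $\beta'>0$.
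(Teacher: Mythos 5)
Your proposal is correct and matches the paper's route: the paper obtains Corollary \ref{weightedPoinIneq} as an immediate specialization of the consolidated inequality \eqref{comb2nd&3rd} (equivalently, of Theorem \ref{2ndineq}) by fixing $\lambda^{2}=1/\beta$ and choosing the admissible upper bound large enough, exactly as you do. Your bookkeeping of the constant, $K(\beta)=2\beta\,K(1/\sqrt{\beta})$, is consistent with the factor $\lambda^{2}/2$ in Theorem \ref{2ndineq}.
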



\section{$L^{2}$-Stability of the Heisenberg Uncertainty Principle-Proofs of
Theorems \ref{Cor 3.1}, \ref{Cor 3.2} and \ref{finalstability}}

\label{sct5} As applications of the above weighted Poincar\'{e} inequalities,
we can get the corresponding $L^{2}$-stability results on the hyperbolic
space. With $E:=\left\{  ce^{-\alpha\rho^{2}(x)}:c\in\mathbb{R},\alpha
>0\right\}  $ the set of the Gaussian functions, and $M$ a density on
$\mathbb{H}^{N}$, we define
\[
d_{1}(u,E,M):=\inf\left\{  \left(  \int_{\mathbb{H}^{N}}\left\vert
u-ce^{-\alpha\rho^{2}(x)}\right\vert ^{2}MdV_{\mathbb{H}}\right)  ^{1/2}%
:c\in\mathbb{R},\alpha>0\right\}  .
\]
Firstly, we can get a scale non-invariant version of the $L^{2}$-stability
result, i.e.

\begin{theorem}
\label{Cor 3.0} Let $N\geq2.$ For all $u\in W^{1,2}(\mathbb{H}^{N}%
)\cap\{u:\int_{\mathbb{H}^{N}}\rho^{2}(x)|u|^{2}dV_{\mathbb{H}}<\infty\}$
there holds
\begin{align*}
&  \int_{\mathbb{H}^{N}}|\nabla_{\mathbb{H}}u|^{2}dV_{\mathbb{H}}%
+\int_{\mathbb{H}^{N}}\rho^{2}(x)|u|^{2}dV_{\mathbb{H}}\\
&  -\int_{\mathbb{H}^{N}}\left[  N+(N-1)\dfrac{\rho(x)\cosh\rho(x)-\sinh
\rho(x)}{\sinh\rho(x)}\right]  |u|^{2}dV_{\mathbb{H}}\\
&  \geq K\inf_{c\in\mathbb{R}}\int_{\mathbb{H}^{N}}\left\vert u-ce^{-\frac
{\rho^{2}(x)}{2}}\right\vert ^{2}dV_{\mathbb{H}}\\
&  \geq Kd_{1}^{2}(u,E,1).
\end{align*}

\end{theorem}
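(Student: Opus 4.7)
The plan is to combine the Heisenberg uncertainty principle identity established in the proof of Theorem \ref{M2} with the scale-invariant weighted Poincar\'{e} inequality of Corollary \ref{weightedPoinIneq}. Specifically, I will specialize the parameter $\lambda=1$ rather than optimizing over $\lambda$, which is exactly what sacrifices the scale invariance but lets us avoid the difficulties caused by $\lambda$ being unbounded.

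First, I would take identity \eqref{L2idt} with $\lambda=1$, which reads
\begin{align*}
&\int_{\mathbb{H}^{N}}|\nabla_{\mathbb{H}}u|^{2}dV_{\mathbb{H}}+\int_{\mathbb{H}^{N}}\rho^{2}(x)|u|^{2}dV_{\mathbb{H}}\\
&\quad-\int_{\mathbb{H}^{N}}\left(N+(N-1)\dfrac{\rho(x)\cosh\rho(x)-\sinh\rho(x)}{\sinh\rho(x)}\right)|u|^{2}dV_{\mathbb{H}}\\
&=\int_{\mathbb{H}^{N}}e^{-\rho^{2}(x)}\left|\nabla_{\mathbb{H}}\bigl(ue^{\rho^{2}(x)/2}\bigr)\right|^{2}dV_{\mathbb{H}}.
\end{align*}
Thus it suffices to bound the right-hand side from below by $K\inf_{c\in\mathbb{R}}\int_{\mathbb{H}^{N}}|u-ce^{-\rho^{2}(x)/2}|^{2}dV_{\mathbb{H}}$.

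Next I would perform the natural change of unknown $v:=ue^{\rho^{2}(x)/2}$. Under this substitution, $u-ce^{-\rho^{2}(x)/2}=(v-c)e^{-\rho^{2}(x)/2}$, so the target inequality becomes exactly
\[
\int_{\mathbb{H}^{N}}|\nabla_{\mathbb{H}}v|^{2}e^{-\rho^{2}(x)}dV_{\mathbb{H}}\;\geq\;K\inf_{c\in\mathbb{R}}\int_{\mathbb{H}^{N}}|v-c|^{2}e^{-\rho^{2}(x)}dV_{\mathbb{H}},
\]
which is precisely Corollary \ref{weightedPoinIneq} applied with $\beta=1$. This yields the first inequality in the statement.

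For the second inequality, I simply observe that for any $\alpha>0$ (in particular for the single choice $\alpha=\tfrac{1}{2}$),
\[
d_{1}^{2}(u,E,1)=\inf_{c\in\mathbb{R},\,\alpha>0}\int_{\mathbb{H}^{N}}|u-ce^{-\alpha\rho^{2}(x)}|^{2}dV_{\mathbb{H}}\;\leq\;\inf_{c\in\mathbb{R}}\int_{\mathbb{H}^{N}}|u-ce^{-\rho^{2}(x)/2}|^{2}dV_{\mathbb{H}},
\]
so the chain of inequalities closes. There is no serious obstacle: the whole argument is a specialization ($\lambda=1$) of the general machinery already developed, combined with the fixed-$\beta$ weighted Poincar\'{e} inequality, and a technical density/approximation remark would handle passing from $C_{0}^{\infty}(\mathbb{H}^{N})$ to the weighted Sobolev class in the hypothesis, using standard cutoff and mollification arguments on $\mathbb{H}^{N}$.
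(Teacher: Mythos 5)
Your proposal is correct and follows essentially the same route as the paper: the paper obtains the very same identity (via Theorem \ref{Cor 1.1} with the Bessel pair $(r^{N-1},r^{N-1}(N-\rho^{2}))$ and $\varphi=e^{-\rho^{2}/2}$, which is just Theorem \ref{Thm 1.1} with $\lambda=1$, $A=1$, $X=-\rho\nabla_{\mathbb{H}}\rho$, i.e.\ \eqref{L2idt} at $\lambda=1$) and then applies Corollary \ref{weightedPoinIneq} to $v=ue^{\rho^{2}(x)/2}$ exactly as you do. The concluding comparison with $d_{1}^{2}(u,E,1)$ is the same trivial restriction of the infimum to $\alpha=\tfrac12$.
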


\begin{proof}
[Proof of Theorem \ref{Cor 3.0}]By utilizing $(r^{N-1},r^{N-1}(N-r^{2}))$ as a
Bessel pair with $\varphi(r)=e^{-\frac{r^{2}}{2}}$ in Theorem~\ref{Cor 1.1},
we get
\begin{align*}
&  \int_{\mathbb{H}^{N}}|\nabla_{\mathbb{H}}u|^{2}dV_{\mathbb{H}}%
-\int_{\mathbb{H}^{N}}(N-\rho^{2}(x))|u|^{2}dV_{\mathbb{H}}\\
&  =\int_{\mathbb{H}^{N}}\left\vert \nabla_{\mathbb{H}}\left(  ue^{\rho
^{2}(x)/2}\right)  \right\vert ^{2}e^{-\rho^{2}(x)}dV_{\mathbb{H}}\\
&  +(N-1)\int_{\mathbb{H}^{N}}\dfrac{\rho(x)\cosh\rho(x)-\sinh\rho(x)}%
{\sinh\rho(x)}|u|^{2}dV_{\mathbb{H}}.
\end{align*}
It follows
\begin{align*}
&  \int_{\mathbb{H}^{N}}|\nabla_{\mathbb{H}}u|^{2}dV_{\mathbb{H}}%
+\int_{\mathbb{H}^{N}}\rho^{2}(x)|u|^{2}dV_{\mathbb{H}}\\
&  -\int_{\mathbb{H}^{N}}\left[  N+(N-1)\dfrac{\rho(x)\cosh\rho(x)-\sinh
\rho(x)}{\sinh\rho(x)}\right]  |u|^{2}dV_{\mathbb{H}}\\
&  =\int_{\mathbb{H}^{N}}\left\vert \nabla_{\mathbb{H}}\left(  ue^{\rho
^{2}(x)/2}\right)  \right\vert ^{2}e^{-\rho^{2}(x)}dV_{\mathbb{H}}\\
&  \geq K\inf_{c\in\mathbb{R}}\int_{\mathbb{H}^{N}}|ue^{\rho^{2}(x)/2}%
-c|^{2}e^{-\rho^{2}(x)}dV_{\mathbb{H}}\\
&  \geq K\inf_{c\in\mathbb{R}}\int_{\mathbb{H}^{N}}\left\vert u-ce^{-\frac
{\rho^{2}(x)}{2}}\right\vert ^{2}dV_{\mathbb{H}},
\end{align*}
as desired. The first inequality is from Corollary \ref{weightedPoinIneq}.
\end{proof}

Inspired from \cite[Theorem~1.4]{CFNL}, we are able to get another version of
the scale non-invariant Heisenberg Uncertainty Principle. Indeed,

\begin{corollary}
With $K$ the constant defined in Corollary \ref{weightedPoinIneq}, we have
\begin{align*}
&  \int_{\mathbb{H}^{N}}|\nabla_{\mathbb{H}}u|^{2}dV_{\mathbb{H}}%
+\int_{\mathbb{H}^{N}}\rho^{2}(x)|u|^{2}dV_{\mathbb{H}}dV_{\mathbb{H}}%
-\int_{\mathbb{H}^{N}}\left[  N+(N-1)\dfrac{\rho(x)\cosh\rho(x)-\sinh\rho
(x)}{\sinh\rho(x)}\right]  |u|^{2}dV_{\mathbb{H}}\\
&  \geq\dfrac{K}{K+1}\inf_{c}\int_{\mathbb{H}^{N}}\left\vert \nabla
_{\mathbb{H}}(u-ce^{-\frac{\rho^{2}(x)}{2}})\right\vert ^{2}dV_{\mathbb{H}%
}+\int_{\mathbb{H}^{N}}\left\vert u-ce^{-\frac{\rho^{2}(x)}{2}}\right\vert
^{2}dV_{\mathbb{H}}\\
&  -\dfrac{N-1}{8}\int_{\mathbb{H}^{N}}\left\vert u-ce^{-\frac{\rho^{2}(x)}%
{2}}\right\vert ^{2}\dfrac{\rho(x)}{\tanh\frac{\rho(x)}{2}}\left(  1-\tanh
^{2}\left(  \dfrac{\rho(x)}{2}\right)  \right)  ^{3}dV_{\mathbb{H}}\\
&  +\dfrac{1}{4}\int_{\mathbb{H}^{N}}\left\vert u-ce^{-\frac{\rho^{2}(x)}{2}%
}\right\vert ^{2}\rho^{2}(x)\left(  1-\tanh^{2}\left(  \dfrac{\rho(x)}%
{2}\right)  \right)  ^{2}dV_{\mathbb{H}}\\
&  -\frac{1}{4}\int_{\mathbb{H}^{N}}\left\vert u-ce^{-\frac{\rho^{2}(x)}{2}%
}\right\vert ^{2}\left(  1-\tanh^{2}\left(  \dfrac{\rho(x)}{2}\right)
\right)  ^{2}dV_{\mathbb{H}}\\
&  -\dfrac{N-3}{4}\int_{\mathbb{H}^{N}}\left\vert u-ce^{-\frac{\rho^{2}(x)}%
{2}}\right\vert ^{2}\rho(x)\tanh\left(  \frac{\rho(x)}{2}\right)  \left(
1-\tanh^{2}\left(  \dfrac{\rho(x)}{2}\right)  \right)  ^{2}dV_{\mathbb{H}}.
\end{align*}

\end{corollary}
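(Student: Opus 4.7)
The plan is to adapt the interpolation argument of Cazacu--Flynn--Lam--Lu \cite{CFNL} to the hyperbolic setting, refining the $L^{2}$-stability estimate of Corollary~\ref{Cor 3.0} so that the right-hand side also controls the hyperbolic gradient of $u-ce^{-\rho^{2}(x)/2}$. Writing $E:=e^{\rho^{2}(x)/2}$, $v_{c}:=u-ce^{-\rho^{2}(x)/2}$, and $\delta$ for the left-hand side, the starting point is the Bessel-pair identity already used in the proof of Corollary~\ref{Cor 3.0}:
\[
\delta \;=\; \int_{\mathbb{H}^{N}}|\nabla_{\mathbb{H}}(uE)|^{2}e^{-\rho^{2}(x)}\,dV_{\mathbb{H}}.
\]
Since $v_{c}E = uE - c$ differs from $uE$ only by an additive constant, $\nabla_{\mathbb{H}}(v_{c}E)=\nabla_{\mathbb{H}}(uE)$ for every $c\in\mathbb{R}$, so this identity also holds with $uE$ replaced by $v_{c}E$. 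Applying Corollary~\ref{weightedPoinIneq} with $\beta=1$ to $w:=uE$ then yields the base estimate $\delta\ge K\inf_{c}\int v_{c}^{2}\,dV_{\mathbb{H}}$.

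With these two facts, I would perform the standard CFNL split $\delta=\tfrac{K}{K+1}\delta+\tfrac{1}{K+1}\delta$. The second piece is bounded below by $\tfrac{K}{K+1}\inf_{c}\int v_{c}^{2}\,dV_{\mathbb{H}}$ using the base estimate. For the first piece, I would expand $|\nabla_{\mathbb{H}}(v_{c}E)|^{2}e^{-\rho^{2}}$ pointwise via $\nabla_{\mathbb{H}}E=E\rho\,\nabla_{\mathbb{H}}\rho$ and $|\nabla_{\mathbb{H}}\rho|^{2}=1$, obtaining
\[
|\nabla_{\mathbb{H}}(v_{c}E)|^{2}e^{-\rho^{2}} \;=\; |\nabla_{\mathbb{H}}v_{c}|^{2}\;+\;2v_{c}\rho\,\langle\nabla_{\mathbb{H}}v_{c},\nabla_{\mathbb{H}}\rho\rangle\;+\;v_{c}^{2}\rho^{2}.
\]
Integrating this and combining with the bound on the second piece, together with $\inf_{c}A+\inf_{c}B\ge\inf_{c}(A+B)$, produces a lower bound of the shape $\tfrac{K}{K+1}\inf_{c}\bigl\{\int|\nabla_{\mathbb{H}}v_{c}|^{2}\,dV_{\mathbb{H}}+\int v_{c}^{2}\,dV_{\mathbb{H}}+\mathcal{C}[v_{c}]\bigr\}$, where $\mathcal{C}[v_{c}]$ collects the residual contributions coming from $\int\rho^{2}v_{c}^{2}\,dV_{\mathbb{H}}$ and the cross term $\int v_{c}\rho\,\langle\nabla_{\mathbb{H}}v_{c},\nabla_{\mathbb{H}}\rho\rangle\,dV_{\mathbb{H}}$.

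The main obstacle is massaging $\mathcal{C}[v_{c}]$ into the exact four-integral form stated in the corollary. Rather than integrating the cross term by parts directly in hyperbolic coordinates (which would merely regenerate the factor $N+(N-1)(\rho\coth\rho-1)$ already present in the Heisenberg deficit on the left-hand side and gain nothing), I would switch to the Poincar\'{e} ball coordinates via $\tanh(\rho/2)=|x|$ and $1-\tanh^{2}(\rho/2)=1-|x|^{2}$, use $\nabla_{\mathbb{H}}\rho=\tfrac{1-|x|^{2}}{2}\tfrac{x}{|x|}$, and integrate by parts against the Euclidean measure $dx$. The powers $(1-\tanh^{2}(\rho/2))^{2}$ and $(1-\tanh^{2}(\rho/2))^{3}$ appearing in the four correction integrals then emerge as the square and cube of the conformal factor $\tfrac{1-|x|^{2}}{2}$ relating $\nabla_{\mathbb{H}}$ to $\nabla$, while the half-angle identities $\sinh\rho=2\tanh(\rho/2)/(1-\tanh^{2}(\rho/2))$ and $\cosh\rho=(1+\tanh^{2}(\rho/2))/(1-\tanh^{2}(\rho/2))$ let one re-express $\rho\coth\rho$ in the form required. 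The precise coefficients $-\tfrac{N-1}{8}$, $\tfrac{1}{4}$, $-\tfrac{1}{4}$, $-\tfrac{N-3}{4}$ are then read off from matching terms, a bookkeeping step that I expect to be tedious but mechanical.
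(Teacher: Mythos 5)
Your reduction of the left-hand side to $\int_{\mathbb{H}^{N}}|\nabla_{\mathbb{H}}(ue^{\rho^{2}(x)/2})|^{2}e^{-\rho^{2}(x)}\,dV_{\mathbb{H}}$, and the use of Corollary~\ref{weightedPoinIneq} to extract the term $\tfrac{K}{K+1}\inf_{c}\int v_{c}^{2}\,dV_{\mathbb{H}}$, match the paper. (Minor point: your auxiliary inequality $\inf_{c}A+\inf_{c}B\ge\inf_{c}(A+B)$ is stated backwards as a general fact; it is harmless here only because your first summand $\int|\nabla_{\mathbb{H}}(v_{c}E)|^{2}e^{-\rho^{2}}\,dV_{\mathbb{H}}$ does not depend on $c$.) The genuine gap is in the step that is supposed to produce the four correction integrals. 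You expand $|\nabla_{\mathbb{H}}(v_{c}E)|^{2}e^{-\rho^{2}}=|\nabla_{\mathbb{H}}v_{c}|^{2}+2v_{c}\rho\langle\nabla_{\mathbb{H}}v_{c},\nabla_{\mathbb{H}}\rho\rangle+\rho^{2}v_{c}^{2}$ and propose to integrate the cross term by parts in the ball coordinates so that conformal factors generate the powers of $1-\tanh^{2}(\rho/2)$. But integration by parts is coordinate-free: $2\int v_{c}\rho\langle\nabla_{\mathbb{H}}v_{c},\nabla_{\mathbb{H}}\rho\rangle\,dV_{\mathbb{H}}=\int\langle\nabla_{\mathbb{H}}(v_{c}^{2}),\rho\nabla_{\mathbb{H}}\rho\rangle\,dV_{\mathbb{H}}=-\int v_{c}^{2}\operatorname{div}(\rho\nabla_{\mathbb{H}}\rho)\,dV_{\mathbb{H}}$, and by \eqref{eq L2.2} this equals $-\int v_{c}^{2}\bigl[N+(N-1)(\rho\coth\rho-1)\bigr]dV_{\mathbb{H}}$ in any chart. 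So your decomposition can only reproduce the deficit identity \eqref{L2idt} with $u$ replaced by $v_{c}$ --- a tautology, since $\nabla_{\mathbb{H}}(v_{c}E)=\nabla_{\mathbb{H}}(uE)$ --- and leaves no freedom from which to ``read off'' the coefficients $-\tfrac{N-1}{8},\,\tfrac14,\,-\tfrac14,\,-\tfrac{N-3}{4}$.

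The paper reaches those four terms from a different product decomposition: it writes $u-ce^{-\rho^{2}/2}=(g-c)e^{-\rho^{2}/2}$ with $g=ue^{\rho^{2}/2}$ and expands $\int|\nabla_{\mathbb{H}}((g-c)e^{-\rho^{2}/2})|^{2}dV_{\mathbb{H}}$ into $\int|\nabla_{\mathbb{H}}g|^{2}e^{-\rho^{2}}dV_{\mathbb{H}}$, the term $\int|g-c|^{2}|\nabla_{\mathbb{H}}(e^{-\rho^{2}/2})|^{2}dV_{\mathbb{H}}$, and the cross term $2\int e^{-\rho^{2}/2}(g-c)\langle\nabla_{\mathbb{H}}(g-c),\nabla_{\mathbb{H}}(e^{-\rho^{2}/2})\rangle dV_{\mathbb{H}}$; the four stated weights come from the Euclidean divergence of the specific vector field $e^{-\rho^{2}}\rho\bigl(\tfrac{1-|x|^{2}}{2}\bigr)^{3-N}\tfrac{x}{|x|}$ generated by \emph{that} cross term. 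This yields the identity $\int|\nabla_{\mathbb{H}}v_{c}|^{2}+\int v_{c}^{2}-(\text{four corrections})=\int|\nabla_{\mathbb{H}}g|^{2}e^{-\rho^{2}}+\int|g-c|^{2}e^{-\rho^{2}}$, after which Corollary~\ref{weightedPoinIneq} finishes the argument exactly as you intend. Be aware, finally, that the powers $(1-\tanh^{2}(\rho/2))^{2}$ and $(1-\tanh^{2}(\rho/2))^{3}$ in the statement are tied to the paper's evaluation $|\nabla_{\mathbb{H}}(e^{-\rho^{2}/2})|^{2}=\bigl(\tfrac{1-|x|^{2}}{2}\bigr)^{2}\rho^{2}e^{-\rho^{2}}$, which carries an extra conformal factor relative to the normalization $|\nabla_{\mathbb{H}}\rho|^{2}=1$ of \eqref{eq L2.1} that you use throughout; carried out with your (consistent) normalization, the bookkeeping cannot terminate at the stated weights, so this step is not the mechanical matching you anticipate.
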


\begin{proof}
Let $u=ge^{-r^{2}/2}$ and $r=\rho(x)$, then
\begin{align*}
\int_{\mathbb{H}^{N}}\left\vert \nabla_{\mathbb{H}}(u-ce^{-r^{2}%
/2})\right\vert ^{2}dV_{\mathbb{H}}  &  =\int_{\mathbb{H}^{N}}\left\vert
\nabla_{\mathbb{H}}((g-c)e^{-r^{2}/2})\right\vert ^{2}dV_{\mathbb{H}}\\
&  =\int_{\mathbb{H}^{N}}\left\vert \nabla_{\mathbb{H}}g\right\vert
^{2}e^{-r^{2}}dV_{\mathbb{H}}+\int_{\mathbb{H}^{N}}\left\vert g-c\right\vert
^{2}\left\vert \nabla_{\mathbb{H}}(e^{-r^{2}/2})\right\vert ^{2}%
dV_{\mathbb{H}}\\
&  +2\int_{\mathbb{H}^{N}}e^{-r^{2}/2}(g-c)\nabla_{\mathbb{H}}(g-c)\cdot
\nabla_{\mathbb{H}}(e^{-r^{2}/2})dV_{\mathbb{H}}.
\end{align*}
We have
\[
\nabla_{\mathbb{H}}(e^{-r^{2}/2})=\left(  \dfrac{1-|x|^{2}}{2}\right)
^{2}\nabla\left(  e^{-r^{2}/2}\right)  =-\left(  \dfrac{1-|x|^{2}}{2}\right)
re^{-r^{2}/2}\dfrac{x}{|x|},
\]
where $\nabla$ is the gradient operator on the Euclidean space. It follows
\[
\left\vert \nabla_{\mathbb{H}}(e^{-r^{2}/2})\right\vert ^{2}=\left(
\dfrac{1-|x|^{2}}{2}\right)  ^{2}r^{2}e^{-r^{2}}.
\]
Thus,
\begin{equation}
\int_{\mathbb{H}^{N}}\left\vert g-c\right\vert ^{2}\left\vert \nabla
_{\mathbb{H}}(e^{-r^{2}/2})\right\vert ^{2}dV_{\mathbb{H}}=\int_{\mathbb{H}%
^{N}}\left\vert g-c\right\vert ^{2}r^{2}e^{-r^{2}}\left(  \dfrac{1-|x|^{2}}%
{2}\right)  ^{2}dV_{\mathbb{H}}. \label{IIterm}%
\end{equation}
Moreover,
\begin{align*}
&  2\int_{\mathbb{H}^{N}}e^{-r^{2}/2}(g-c)\nabla_{\mathbb{H}}(g-c)\cdot
\nabla_{\mathbb{H}}(e^{-r^{2}/2})dV_{\mathbb{H}}\\
&  =-\int_{B(0,1)}e^{-r^{2}}\dfrac{r}{|x|}\left(  \dfrac{1-|x|^{2}}{2}\right)
^{3-N}\nabla((g-c)^{2})\cdot xdx\\
&  =\int_{B(0,1)}\left\vert g-c\right\vert ^{2}\operatorname{div}\left(
e^{-r^{2}}r\left(  \dfrac{1-|x|^{2}}{2}\right)  ^{3-N}\frac{x}{|x|}\right)
dx.
\end{align*}
We have
\begin{align*}
\operatorname{div}\left(  e^{-r^{2}}r\left(  \dfrac{1-|x|^{2}}{2}\right)
^{3-N}\frac{x}{|x|}\right)   &  =\sum_{i=1}^{N}\dfrac{\partial}{\partial
x_{i}}\left(  e^{-r^{2}}r\left(  \dfrac{1-|x|^{2}}{2}\right)  ^{3-N}%
\frac{x_{i}}{|x|}\right) \\
&  =Ne^{-r^{2}}\dfrac{r}{|x|}\left(  \dfrac{1-|x|^{2}}{2}\right)  ^{3-N}%
+\sum_{i=1}^{N}x_{i}\dfrac{\partial}{\partial x_{i}}\left(  e^{-r^{2}}%
\dfrac{r}{|x|}\left(  \dfrac{1-|x|^{2}}{2}\right)  ^{3-N}\right) \\
&  =(N-1)e^{-r^{2}}\dfrac{r}{|x|}\left(  \dfrac{1-|x|^{2}}{2}\right)
^{3-N}-2e^{-r^{2}}r^{2}\left(  \dfrac{1-|x|^{2}}{2}\right)  ^{2-N}\\
&  +e^{-r^{2}}\left(  \dfrac{1-|x|^{2}}{2}\right)  ^{2-N}+(N-3)e^{-r^{2}%
}r|x|\left(  \dfrac{1-|x|^{2}}{2}\right)  ^{2-N}.
\end{align*}
Therefore,
\begin{align}
&  2\int_{\mathbb{H}^{N}}e^{-r^{2}/2}(g-c)\nabla_{\mathbb{H}}(g-c)\cdot
\nabla_{\mathbb{H}}(e^{-r^{2}/2})dV_{\mathbb{H}}\nonumber\label{IIIterm}\\
&  =(N-1)\int_{\mathbb{H}^{N}}\left\vert g-c\right\vert ^{2}e^{-r^{2}}%
\dfrac{r}{|x|}\left(  \dfrac{1-|x|^{2}}{2}\right)  ^{3}dV_{\mathbb{H}}%
-2\int_{\mathbb{H}^{N}}\left\vert g-c\right\vert ^{2}e^{-r^{2}}r^{2}\left(
\dfrac{1-|x|^{2}}{2}\right)  ^{2}dV_{\mathbb{H}}\nonumber\\
&  +\int_{\mathbb{H}^{N}}\left\vert g-c\right\vert ^{2}e^{-r^{2}}\left(
\dfrac{1-|x|^{2}}{2}\right)  ^{2}dV_{\mathbb{H}}+(N-3)\int_{\mathbb{H}^{N}%
}\left\vert g-c\right\vert ^{2}e^{-r^{2}}r|x|\left(  \dfrac{1-|x|^{2}}%
{2}\right)  ^{2}dV_{\mathbb{H}}.
\end{align}
From \eqref{IIterm} and \eqref{IIIterm}, we get
\begin{align*}
\int_{\mathbb{H}^{N}}\left\vert \nabla_{\mathbb{H}}(u-ce^{-r^{2}%
/2})\right\vert ^{2}dV_{\mathbb{H}}  &  =\int_{\mathbb{H}^{N}}\left\vert
\nabla_{\mathbb{H}}((g-c)e^{-r^{2}/2})\right\vert ^{2}dV_{\mathbb{H}}\\
&  =\int_{\mathbb{H}^{N}}\left\vert \nabla_{\mathbb{H}}g\right\vert
^{2}e^{-r^{2}}dV_{\mathbb{H}}+(N-1)\int_{\mathbb{H}^{N}}\left\vert
g-c\right\vert ^{2}e^{-r^{2}}\dfrac{r}{|x|}\left(  \dfrac{1-|x|^{2}}%
{2}\right)  ^{3}dV_{\mathbb{H}}\\
&  -\int_{\mathbb{H}^{N}}\left\vert g-c\right\vert ^{2}e^{-r^{2}}r^{2}\left(
\dfrac{1-|x|^{2}}{2}\right)  ^{2}dV_{\mathbb{H}}+\int_{\mathbb{H}^{N}%
}\left\vert g-c\right\vert ^{2}e^{-r^{2}}\left(  \dfrac{1-|x|^{2}}{2}\right)
^{2}dV_{\mathbb{H}}\\
&  +(N-3)\int_{\mathbb{H}^{N}}\left\vert g-c\right\vert ^{2}e^{-r^{2}%
}r|x|\left(  \dfrac{1-|x|^{2}}{2}\right)  ^{2}dV_{\mathbb{H}}.
\end{align*}
Hence
\begin{align*}
&  \int_{\mathbb{H}^{N}}\left\vert \nabla_{\mathbb{H}}(u-ce^{-r^{2}%
/2})\right\vert ^{2}dV_{\mathbb{H}}+\int_{\mathbb{H}^{N}}\left\vert
u-ce^{-r^{2}/2}\right\vert ^{2}dV_{\mathbb{H}}\\
&  -\dfrac{N-1}{8}\int_{\mathbb{H}^{N}}\left\vert g-c\right\vert ^{2}%
e^{-r^{2}}\dfrac{r}{|x|}\left(  1-\tanh^{2}\left(  \dfrac{r}{2}\right)
\right)  ^{3}dV_{\mathbb{H}}+\dfrac{1}{4}\int_{\mathbb{H}^{N}}\left\vert
g-c\right\vert ^{2}e^{-r^{2}}r^{2}\left(  1-\tanh^{2}\left(  \dfrac{r}%
{2}\right)  \right)  ^{2}dV_{\mathbb{H}}\\
&  -\frac{1}{4}\int_{\mathbb{H}^{N}}\left\vert g-c\right\vert ^{2}e^{-r^{2}%
}\left(  1-\tanh^{2}\left(  \dfrac{r}{2}\right)  \right)  ^{2}dV_{\mathbb{H}%
}-\dfrac{N-3}{4}\int_{\mathbb{H}^{N}}\left\vert g-c\right\vert ^{2}e^{-r^{2}%
}r\tanh\left(  \frac{r}{2}\right)  \left(  1-\tanh^{2}\left(  \dfrac{r}%
{2}\right)  \right)  ^{2}dV_{\mathbb{H}}\\
&  =\int_{\mathbb{H}^{N}}\left\vert \nabla_{\mathbb{H}}g\right\vert
^{2}e^{-r^{2}}dV_{\mathbb{H}}+\int_{\mathbb{H}^{N}}\left\vert g-c\right\vert
^{2}e^{-r^{2}}dV_{\mathbb{H}}.
\end{align*}
Consequently, from Corollary \ref{weightedPoinIneq},
\begin{align*}
&  \dfrac{K}{K+1}\inf_{c}\left\{  \int_{\mathbb{H}^{N}}\left\vert
\nabla_{\mathbb{H}}(u-ce^{r^{2}/2})\right\vert ^{2}dV_{\mathbb{H}}%
+\int_{\mathbb{H}^{N}}\left\vert u-ce^{-r^{2}/2}\right\vert ^{2}%
dV_{\mathbb{H}}\right. \\
&  -\dfrac{N-1}{8}\int_{\mathbb{H}^{N}}\left\vert g-c\right\vert ^{2}%
e^{-r^{2}}\dfrac{r}{|x|}\left(  1-\tanh^{2}\left(  \dfrac{r}{2}\right)
\right)  ^{3}dV_{\mathbb{H}}+\dfrac{1}{4}\int_{\mathbb{H}^{N}}\left\vert
g-c\right\vert ^{2}e^{-r^{2}}r^{2}\left(  1-\tanh^{2}\left(  \dfrac{r}%
{2}\right)  \right)  ^{2}dV_{\mathbb{H}}\\
&  \left.  -\frac{1}{4}\int_{\mathbb{H}^{N}}\left\vert g-c\right\vert
^{2}e^{-r^{2}}\left(  1-\tanh^{2}\left(  \dfrac{r}{2}\right)  \right)
^{2}dV_{\mathbb{H}}-\dfrac{N-3}{4}\int_{\mathbb{H}^{N}}\left\vert
g-c\right\vert ^{2}e^{-r^{2}}r\tanh\left(  \frac{r}{2}\right)  \left(
1-\tanh^{2}\left(  \dfrac{r}{2}\right)  \right)  ^{2}dV_{\mathbb{H}}\right\}
\\
&  \leq\dfrac{K}{K+1}\left(  \int_{\mathbb{H}^{N}}\left\vert \nabla
_{\mathbb{H}}g\right\vert ^{2}e^{-r^{2}}dV_{\mathbb{H}}+\dfrac{1}{K}%
\int_{\mathbb{H}^{N}}\left\vert \nabla_{\mathbb{H}}g\right\vert ^{2}e^{-r^{2}%
}dV_{\mathbb{H}}\right) \\
&  \leq\int_{\mathbb{H}^{N}}\left\vert \nabla_{\mathbb{H}}g\right\vert
^{2}e^{-r^{2}}dV_{\mathbb{H}}=\int_{\mathbb{H}^{N}}\left\vert \nabla
_{\mathbb{H}}(ue^{r^{2}/2})\right\vert ^{2}e^{-r^{2}}dV_{\mathbb{H}},
\end{align*}
which implies our result.
\end{proof}

Next, some scale invariant $L^{2}$-stability results are given as direct
consequences of the weighted Poincar\'{e} inequalities. Recall that the
Heisenberg deficit is defined by
\begin{align*}
\delta_{1,\mathbb{H}}(u):  &  =\left(  \int_{\mathbb{H}^{N}}|\nabla
_{\mathbb{H}}u|^{2}dV_{\mathbb{H}}\right)  ^{1/2}\left(  \int_{\mathbb{H}^{N}%
}\rho^{2}(x)|u|^{2}dV_{\mathbb{H}}\right)  ^{1/2}\\
&  -\dfrac{1}{2}\int_{\mathbb{H}^{N}}\left[  N+(N-1)\dfrac{\rho(x)\cosh
\rho(x)-\ \sinh\rho(x)}{\sinh\rho(x)}\right]  |u|^{2}dV_{\mathbb{H}}.
\end{align*}
Besides, we also consider another Heisenberg deficit as follows
\begin{align*}
\delta_{2,\mathbb{H}}(u):  &  =\left(  \int_{\mathbb{H}^{N}}|\nabla
_{\mathbb{H}}u|^{2}dV_{\mathbb{H}}\right)  \left(  \int_{\mathbb{H}^{N}}%
\rho^{2}(x)|u|^{2}dV_{\mathbb{H}}\right) \\
&  -\dfrac{1}{4}\left(  \int_{\mathbb{H}^{N}}\left[  N+(N-1)\dfrac
{\rho(x)\cosh\rho(x)-\sinh\rho(x)}{\sinh\rho(x)}\right]  |u|^{2}%
dV_{\mathbb{H}}\right)  ^{2}.
\end{align*}

By taking the advantage of Theorem \ref{1stineq}, we will provide a proof for
Theorem \ref{Cor 3.1} and prove that
\begin{align*}
\delta_{1,\mathbb{H}}(u)  &  \geq K\inf_{c\in\mathbb{R}\text{,~}\lambda>0}%
\int_{\mathbb{H}^{N}}\left\vert u-ce^{-\frac{\rho^{2}(x)}{2\lambda^{2}}%
}\right\vert ^{2}\left(  1-\tanh^{2}\left(  \dfrac{\rho(x)}{2}\right)
\right)  ^{N}dV_{\mathbb{H}}\\
&  \geq K\inf_{c\in\mathbb{R}\text{,~}\lambda>0}\left\Vert u-ce^{-\frac
{\rho^{2}(x)}{2\lambda^{2}}}\right\Vert _{L^{2}(\mathbb{H}^{N},M)}^{2}\\
&  \geq Kd_{1}^{2}(u,E,M)
\end{align*}
and
\[
\delta_{2,\mathbb{H}}(u)\geq K\left(  \int_{\mathbb{H}^{N}}\left[
N+(N-1)\dfrac{\rho(x)\cosh\rho(x)-\sinh\rho(x)}{\sinh\rho(x)}\right]
|u|^{2}dV_{\mathbb{H}}\right)  d_{1}^{2}(u,E,M)+K^{2}d_{1}^{4}(u,E,M).
\]

\begin{proof}
[Proof of Theorem \ref{Cor 3.1}]From \eqref{DeficitIdt}, we have
\begin{align*}
\delta_{1,\mathbb{H}}(u)  &  =\dfrac{\lambda^{2}}{2}\int_{\mathbb{H}^{N}%
}e^{-\frac{\rho^{2}(x)}{\lambda^{2}}}\left\vert \nabla_{\mathbb{H}}\left(
ue^{\frac{\rho^{2}(x)}{2\lambda^{2}}}\right)  \right\vert ^{2}dV_{\mathbb{H}%
}\\
&  \geq K\inf_{c}\int_{\mathbb{H}^{N}}\left\vert ue^{\frac{\rho^{2}%
(x)}{2\lambda^{2}}}-c\right\vert ^{2}\left(  1-\tanh^{2}\left(  \dfrac
{\rho(x)}{2}\right)  \right)  ^{N}e^{-\frac{\rho^{2}(x)}{\lambda^{2}}%
}dV_{\mathbb{H}}\\
&  \geq K\inf_{c}\int_{\mathbb{H}^{N}}\left\vert u-ce^{-\frac{\rho^{2}%
(x)}{2\lambda^{2}}}\right\vert ^{2}\left(  1-\tanh^{2}\left(  \dfrac{\rho
(x)}{2}\right)  \right)  ^{N}dV_{\mathbb{H}},
\end{align*}
as desired, where the first inequality is from Theorem \ref{1stineq}.
\end{proof}

Next, given $\beta>0$, we define the set
\[
\mathcal{A}_{\beta}:=\left\{  u\in W^{1,2}(\mathbb{H}^{N})\;\text{such
that}\;\int_{\mathbb{H}^{N}}\rho^{2}(x)|u|^{2}dV_{\mathbb{H}}<\infty
\;\text{and}\;\dfrac{\int_{\mathbb{H}^{N}}\rho^{2}(x)|u|^{2}dV_{\mathbb{H}}%
}{\int_{\mathbb{H}^{N}}|\nabla_{\mathbb{H}}u|^{2}dV_{\mathbb{H}}}\leq\beta
^{4}\;\right\}  .
\]

Using this space of functions and Theorem \ref{2ndineq}, we have the stability
result stated in Theorem \ref{Cor 3.2}:

\begin{proof}
[Proof of Theorem \ref{Cor 3.2}]Similarly, using \eqref{DeficitIdt}, it
follows us
\begin{align*}
\delta_{1,\mathbb{H}}(u)  &  =\dfrac{\lambda^{2}}{2}\int_{\mathbb{H}^{N}%
}e^{-\frac{\rho^{2}(x)}{\lambda^{2}}}\left\vert \nabla_{\mathbb{H}}\left(
ue^{\frac{\rho^{2}(x)}{2\lambda^{2}}}\right)  \right\vert ^{2}dV_{\mathbb{H}%
}\\
&  \geq K\inf_{c}\int_{\mathbb{H}^{N}}\left\vert ue^{\frac{\rho^{2}%
(x)}{2\lambda^{2}}}-c\right\vert ^{2}e^{-\frac{\rho^{2}(x)}{\lambda^{2}}%
}dV_{\mathbb{H}}\\
&  \geq K\inf_{c}\int_{\mathbb{H}^{N}}\left\vert u-ce^{-\frac{\rho^{2}%
(x)}{2\lambda^{2}}}\right\vert ^{2}dV_{\mathbb{H}},
\end{align*}
as desired. Here, we notice that in the first equality, we used the fact that
\[
\lambda=\left(  \dfrac{\int_{\mathbb{H}^{N}}\rho^{2}(x)|u|^{2}dV_{\mathbb{H}}%
}{\int_{\mathbb{H}^{N}}|\nabla_{\mathbb{H}}u|^{2}dV_{\mathbb{H}}}\right)
^{1/4},
\]
which is the reason why we just care about $u\in\mathcal{A}_{\beta}$.
\end{proof}

Next, we point out another $L^{2}$-stability result for the case when the
function $u$ belongs to the complement of $\mathcal{A}_{\beta}$. More precisely,

\begin{corollary}
\label{Cor 3.3} With $\beta$ defined as in Theorem \ref{3rdineq}, for any
$u\in\overline{\mathcal{A}_{\beta}}$, the complement of $\mathcal{A}_{\beta}$,
we have
\begin{align*}
\Tilde{\delta}_{1,\mathbb{H}}(u)  &  \geq K\inf_{\lambda\geq\beta
,c\in\mathbb{R}}\int_{\mathbb{H}^{N}}\left\vert u-ce^{-\left(  \frac{\rho
(x)}{\sinh\rho(x)}\right)  ^{\frac{(N-1)}{2}}\frac{\rho^{2}(x)}{2\lambda^{2}}%
}\right\vert ^{2}dV_{\mathbb{H}}\\
&  =K\inf_{\lambda\geq\beta,c\in\mathbb{R}}\left\Vert u-ce^{-\left(
\frac{\rho(x)}{\sinh\rho(x)}\right)  ^{\frac{(N-1)}{2}}\frac{\rho^{2}%
(x)}{2\lambda^{2}}}\right\Vert _{L^{2}(\mathbb{H}^{N})}^{2},
\end{align*}
where $K$ is a universal constant depending only on $\beta$.
\end{corollary}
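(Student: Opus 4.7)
The plan is to parallel the proof of Theorem \ref{Cor 3.2} with Theorem \ref{3rdineq} in place of Theorem \ref{2ndineq}. Since the latter features the Gaussian-type weight $M_{\lambda}(\rho) := e^{-G(\rho)/\lambda^{2}}$ with $G(\rho) := (\rho/\sinh\rho)^{(N-1)/2}\rho^{2}$, the first step is to produce an analogue of the Heisenberg identity \eqref{DeficitIdt} whose right-hand side carries this specific weight rather than $e^{-\rho^{2}/\lambda^{2}}$.

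I would obtain it by applying the master identity of Theorem \ref{Thm 1.1} with $p=2$, $A \equiv 1$, and the vector field $X := -\tfrac{1}{2}G'(\rho)\,\nabla_{\mathbb{H}}\rho$. Using $|\nabla_{\mathbb{H}}\rho|=1$ and $\Delta_{\mathbb{H}}\rho = (N-1)\coth\rho$, and rewriting the remainder $\mathcal{R}_{2}$ exactly as in \eqref{eq L2.3} --- noting that the choice $h(\rho)=G'(\rho)/2$ is precisely what makes $\lambda\nabla_{\mathbb{H}}u + uh(\rho)\nabla_{\mathbb{H}}\rho/\lambda = \lambda e^{-G(\rho)/(2\lambda^{2})}\nabla_{\mathbb{H}}(u\,e^{G(\rho)/(2\lambda^{2})})$ --- one arrives at the identity
\begin{align*}
& \lambda^{2}\int_{\mathbb{H}^{N}}|\nabla_{\mathbb{H}}u|^{2}\,dV_{\mathbb{H}} + \frac{1}{\lambda^{2}}\int_{\mathbb{H}^{N}}\frac{(G'(\rho))^{2}}{4}|u|^{2}\,dV_{\mathbb{H}} - \int_{\mathbb{H}^{N}}\Omega(\rho)\,|u|^{2}\,dV_{\mathbb{H}} \\
& \qquad = \lambda^{2}\int_{\mathbb{H}^{N}} e^{-G(\rho)/\lambda^{2}}\left|\nabla_{\mathbb{H}}\!\left(u\,e^{G(\rho)/(2\lambda^{2})}\right)\right|^{2} dV_{\mathbb{H}},
\end{align*}
where $\Omega(\rho) := \tfrac{1}{2}G''(\rho) + \tfrac{N-1}{2}G'(\rho)\coth\rho$. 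Optimizing in $\lambda$ by setting $\lambda_{\star} := \bigl(\int (G')^{2}|u|^{2}\,dV_{\mathbb{H}}\,/\,(4\int |\nabla_{\mathbb{H}}u|^{2}\,dV_{\mathbb{H}})\bigr)^{1/4}$ collapses the left-hand side to a Cauchy--Schwarz-type expression, which I would take as the definition
$$
\tilde{\delta}_{1,\mathbb{H}}(u) := \Bigl(\int_{\mathbb{H}^{N}}|\nabla_{\mathbb{H}}u|^{2}\,dV_{\mathbb{H}}\Bigr)^{1/2}\Bigl(\int_{\mathbb{H}^{N}}\tfrac{(G'(\rho))^{2}}{4}|u|^{2}\,dV_{\mathbb{H}}\Bigr)^{1/2} - \tfrac{1}{2}\int_{\mathbb{H}^{N}}\Omega(\rho)|u|^{2}\,dV_{\mathbb{H}},
$$
yielding the exact counterpart of \eqref{DeficitIdt}:
$$
\tilde{\delta}_{1,\mathbb{H}}(u) = \frac{\lambda_{\star}^{2}}{2}\int_{\mathbb{H}^{N}} e^{-G(\rho)/\lambda_{\star}^{2}}\left|\nabla_{\mathbb{H}}\!\left(u\,e^{G(\rho)/(2\lambda_{\star}^{2})}\right)\right|^{2} dV_{\mathbb{H}}.
$$

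From this point the argument is identical in spirit to the end of the proof of Theorem \ref{Cor 3.2}: once $u \in \overline{\mathcal{A}_{\beta}}$ is shown to force $\lambda_{\star} \geq \beta$, Theorem \ref{3rdineq} applies to $v := u\,e^{G(\rho)/(2\lambda_{\star}^{2})}$ and yields
$$
\tilde{\delta}_{1,\mathbb{H}}(u) \geq K(\beta)\inf_{c}\int_{\mathbb{H}^{N}} \bigl|u - c\,e^{-G(\rho)/(2\lambda_{\star}^{2})}\bigr|^{2}\,dV_{\mathbb{H}} \geq K(\beta)\inf_{\lambda \geq \beta,\,c \in \mathbb{R}}\int_{\mathbb{H}^{N}} \bigl|u - c\,e^{-G(\rho)/(2\lambda^{2})}\bigr|^{2}\,dV_{\mathbb{H}},
$$
which is the claim.

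The main obstacle I expect is the threshold matching between $\overline{\mathcal{A}_{\beta}}$ --- defined via $\int \rho^{2}|u|^{2}\,/\,\int|\nabla_{\mathbb{H}}u|^{2} > \beta^{4}$ --- and the threshold $\lambda_{\star} \geq \beta$ coming from $\int (G'(\rho))^{2}|u|^{2}$. Because $G'(\rho)/2 \sim \rho$ as $\rho \to 0$ while $G'(\rho)/2$ decays exponentially as $\rho \to \infty$, one only has $|G'(\rho)/2| \leq C\rho$ globally, so a priori $\lambda_{\star} \leq C^{1/2}\lambda_{\rho}$ with $\lambda_{\rho} := (\int \rho^{2}|u|^{2}\,/\,\int|\nabla_{\mathbb{H}}u|^{2})^{1/4}$. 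The clean workaround is to plug $\lambda = \lambda_{\rho}$ into the identity above in place of the fully optimized $\lambda_{\star}$: by AM--GM the left-hand side still dominates (a universal multiple of) $\tilde{\delta}_{1,\mathbb{H}}(u)$, while $\lambda_{\rho} \geq \beta$ on $\overline{\mathcal{A}_{\beta}}$, so Theorem \ref{3rdineq} applies verbatim, with the loss absorbed into $K(\beta)$.
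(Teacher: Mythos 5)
Your identity (Theorem \ref{Thm 1.1} with $p=2$, $A\equiv1$, $X=-\tfrac12 G'(\rho)\nabla_{\mathbb H}\rho$) is correct, and the overall strategy is the right one, but the final step has the key inequality pointing the wrong way. Write $A=\int|\nabla_{\mathbb H}u|^2$, $B=\int\tfrac14(G'(\rho))^2|u|^2$, $B_\rho=\int\rho^2(x)|u|^2$, and let $L(\lambda)=\lambda^2A+\lambda^{-2}B-\int\Omega|u|^2$ denote the left side of your identity. You define $\tilde\delta_{1,\mathbb H}(u)=\tfrac12 L(\lambda_\star)=\min_\lambda\tfrac12L(\lambda)$, but you cannot guarantee $\lambda_\star\ge\beta$ on $\overline{\mathcal A_\beta}$ (only $\lambda_\rho=(B_\rho/A)^{1/4}>\beta$ is available, and $G'$ decays exponentially while $\rho$ grows, so $\lambda_\star$ may be far below $\lambda_\rho$). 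Your workaround is to evaluate the identity at $\lambda=\lambda_\rho$; but then all you get is $L(\lambda_\rho)\ge 2K\inf_c\int|u-ce^{-G/(2\lambda_\rho^2)}|^2$, while AM--GM gives $L(\lambda_\rho)\ge L(\lambda_\star)=2\tilde\delta_{1,\mathbb H}(u)$ --- both inequalities bound $L(\lambda_\rho)$ \emph{from below}, so no lower bound on $\tilde\delta_{1,\mathbb H}(u)$ follows. A comparison $L(\lambda_\rho)\le C\,\tilde\delta_{1,\mathbb H}(u)$ would require $B_\rho\lesssim B$, i.e.\ $\int\rho^2|u|^2\lesssim\int (G')^2|u|^2$, which fails because $(G')^2/\rho^2\to0$ exponentially at infinity. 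So the "loss absorbed into $K(\beta)$" cannot be made universal.

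The repair --- and this is what the paper actually does --- is to never optimize in the new variable at all: keep $\lambda=\lambda_\rho$ (so membership in $\overline{\mathcal A_\beta}$ directly gives $\lambda\ge\beta$), start from the \emph{original} Heisenberg identity \eqref{L2idt}, whose remainder is the exact square $\int|\tfrac{u\rho\nabla_{\mathbb H}\rho}{\lambda}+\lambda\nabla_{\mathbb H}u|^2$, and rewrite that square algebraically as $\lambda^2\int M_\lambda|\nabla_{\mathbb H}(uM_\lambda^{-1/2})|^2$ plus explicit correction integrals (involving $|\nabla_{\mathbb H}M_\lambda|^2$ and a cross term). One then \emph{defines} $\tilde\delta_{1,\mathbb H}(u):=\delta_{1,\mathbb H}(u)+(\text{those corrections})$, so that by construction $\tilde\delta_{1,\mathbb H}(u)=\tfrac{\lambda^2}{2}\int M_\lambda|\nabla_{\mathbb H}(uM_\lambda^{-1/2})|^2\,dV_{\mathbb H}$ at $\lambda=\lambda_\rho\ge\beta$, and Theorem \ref{3rdineq} applies verbatim. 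Since $\tilde\delta_{1,\mathbb H}$ is only specified in the proof, your alternative definition via $L(\lambda_\star)$ is not forced, but with that definition the argument does not close; defining it instead as the quantity obtained at $\lambda=\lambda_\rho$ (equivalently, as $\delta_{1,\mathbb H}$ plus the reverse-engineered corrections) makes your scheme work and coincides in substance with the paper's proof.
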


\begin{proof}
We will make use of Theorem \ref{3rdineq}. First, from \eqref{L2idt}, we have
\begin{align*}
&  \lambda^{2}\int_{\mathbb{H}^{N}}|\nabla_{\mathbb{H}}u|^{2}dV_{\mathbb{H}%
}+\lambda^{-2}\int_{\mathbb{H}^{N}}\rho^{2}(x)|u|^{2}dV_{\mathbb{H}}\\
&  -\int_{\mathbb{H}^{N}}\left(  N+(N-1)\dfrac{\rho(x)\cosh\rho(x)-\sinh
\rho(x)}{\sinh\rho(x)}\right)  \;|u|^{2}dV_{\mathbb{H}}\\
&  =\int_{\mathbb{H}^{N}}\left\vert \dfrac{u\rho(x)\nabla_{\mathbb{H}}%
(\rho(x))}{\lambda}+\lambda\nabla_{\mathbb{H}}u\right\vert ^{2}dV_{\mathbb{H}%
}.
\end{align*}
For $M_{\lambda}=e^{-\left(  \frac{r}{\sinh r}\right)  ^{\frac{(N-1)}{2}}%
\frac{r^{2}}{\lambda^{2}}}$ with $r=\rho(x)$, we have
\[
\nabla_{\mathbb{H}}M_{\lambda}=\dfrac{M_{\lambda}}{2\lambda^{2}}%
\dfrac{r^{\frac{(N+1)}{2}}}{(\sinh r)^{\frac{(N-1)}{2}}}\left(  (N-1)r\dfrac
{\cosh r}{\sinh r}-(N+3)\right)  \nabla_{\mathbb{H}}r,
\]
which in turn implies
\begin{align*}
|\nabla_{\mathbb{H}}M_{\lambda}|^{2}  &  =\dfrac{(N-1)^{2}}{4\lambda^{4}%
}M_{\lambda}^{2}\dfrac{r^{N+3}}{(\sinh r)^{N+1}}\cosh^{2}r+\dfrac{(N+3)^{2}%
}{4\lambda^{4}}M_{\lambda}^{2}\dfrac{r^{N+1}}{(\sinh r)^{N-1}}\\
&  -\dfrac{(N-1)(N+3)}{2\lambda^{4}}M_{\lambda}^{2}\dfrac{r^{N+2}}{(\sinh
r)^{N}}\cosh r.
\end{align*}
Therefore,
\begin{align*}
\lambda^{2}M_{\lambda}\left\vert \nabla_{\mathbb{H}}(uM_{\lambda}%
^{-1/2})\right\vert ^{2}  &  =\lambda^{2}|\nabla_{\mathbb{H}}u|^{2}%
+\dfrac{\lambda^{2}}{4}\dfrac{\left\vert u\right\vert ^{2}}{M_{\lambda}^{2}%
}|\nabla_{\mathbb{H}}M_{\lambda}|^{2}-\lambda^{2}\dfrac{u}{M_{\lambda}}%
\nabla_{\mathbb{H}}u\nabla_{\mathbb{H}}M_{\lambda}\\
&  +\left\vert \dfrac{ur\nabla_{\mathbb{H}}r}{\lambda}+\lambda\nabla
_{\mathbb{H}}u\right\vert ^{2}-\left\vert \dfrac{ur\nabla_{\mathbb{H}}%
r}{\lambda}+\lambda\nabla_{\mathbb{H}}u\right\vert ^{2},
\end{align*}
which allows us
\begin{align*}
&  \left\vert \dfrac{ur\nabla_{\mathbb{H}}r}{\lambda}+\lambda\nabla
_{\mathbb{H}}u\right\vert ^{2}=\lambda^{2}M_{\lambda}\left\vert \nabla
_{\mathbb{H}}(uM_{\lambda}^{-1/2})\right\vert ^{2}\\
&  -\dfrac{\left\vert u\right\vert ^{2}}{8\lambda^{2}}\left(  \dfrac
{(N-1)^{2}}{2}\dfrac{r^{N+3}}{(\sinh r)^{N+1}}\cosh^{2}r+\dfrac{(N+3)^{2}}%
{2}\dfrac{r^{N+1}}{(\sinh r)^{N-1}}-(N-1)(N+3)\dfrac{r^{N+2}}{(\sinh r)^{N}%
}\cosh r-8r^{2}\right) \\
&  +u\nabla_{\mathbb{H}}u\nabla_{\mathbb{H}}r\left(  \dfrac{N-1}{2}%
\dfrac{r^{\frac{(N+3)}{2}}}{(\sinh r)^{\frac{(N+1)}{2}}}\cosh r-\dfrac{N+3}%
{2}\dfrac{r^{\frac{(N+1)}{2}}}{(\sinh r)^{\frac{(N-1)}{2}}}+2r\right)  .
\end{align*}
Hence, from \eqref{DeficitIdt} and choosing $\lambda=\left(  \dfrac
{\int_{\mathbb{H}^{N}}\rho^{2}(x)|u|^{2}dV_{\mathbb{H}}}{\int_{\mathbb{H}^{N}%
}|\nabla_{\mathbb{H}}u|^{2}dV_{\mathbb{H}}}\right)  ^{1/4}$, we acquire
\[
\delta_{1,\mathbb{H}}(u)=\dfrac{1}{2}\int_{\mathbb{H}^{N}}\left\vert
\dfrac{ur\nabla_{\mathbb{H}}r}{\lambda}+\lambda\nabla_{\mathbb{H}}u\right\vert
^{2}dV_{\mathbb{H}},
\]
where
\begin{align*}
\delta_{1,\mathbb{H}}(u)  &  =\left(  \int_{\mathbb{H}^{N}}|\nabla
_{\mathbb{H}}u|^{2}dV_{\mathbb{H}}\right)  ^{1/2}\left(  \int_{\mathbb{H}^{N}%
}\rho^{2}(x)|u|^{2}dV_{\mathbb{H}}\right)  ^{1/2}\\
&  -\dfrac{1}{2}\int_{\mathbb{H}^{N}}\left(  N+(N-1)\dfrac{\rho(x)\cosh
\rho(x)-\ \sinh\rho(x)}{\sinh\rho(x)}\right)  |u|^{2}dV_{\mathbb{H}}.
\end{align*}
Therefore, by defining
\begin{align*}
\Tilde{\delta}_{1,\mathbb{H}}(u)  &  :=\delta_{1,\mathbb{H}}(u)+\dfrac
{1}{16\lambda^{2}}\int_{\mathbb{H}^{N}}\left\vert u\right\vert ^{2}\left(
\dfrac{(N-1)^{2}}{2}\dfrac{r^{N+3}}{(\sinh r)^{N+1}}\cosh^{2}r+\dfrac
{(N+3)^{2}}{2}\dfrac{r^{N+1}}{(\sinh r)^{N-1}}\right. \\
&  \left.  -(N-1)(N+3)\dfrac{r^{N+2}}{(\sinh r)^{N}}\cosh r-8r^{2}\right)
dV_{\mathbb{H}}\\
&  -\int_{\mathbb{H}^{N}}u\nabla_{\mathbb{H}}u\nabla_{\mathbb{H}}r\left(
\dfrac{N-1}{4}\dfrac{r^{\frac{(N+3)}{2}}}{(\sinh r)^{\frac{(N+1)}{2}}}\cosh
r-\dfrac{N+3}{4}\dfrac{r^{\frac{(N+1)}{2}}}{(\sinh r)^{\frac{(N-1)}{2}}%
}+r\right)  dV_{\mathbb{H}},
\end{align*}
we obtain
\begin{align*}
\Tilde{\delta}_{1,\mathbb{H}}(u)  &  =\dfrac{\lambda^{2}}{2}\int
_{\mathbb{H}^{N}}\left\vert \nabla_{\mathbb{H}}(uM_{\lambda}^{-1/2}%
)\right\vert ^{2}M_{\lambda}dV_{\mathbb{H}}\geq K\inf_{c}\int_{\mathbb{H}^{N}%
}|uM_{\lambda}^{-1/2}-c|^{2}M_{\lambda}dV_{\mathbb{H}}\\
&  \geq K\inf_{c}\int_{\mathbb{H}^{N}}\left\vert u-cM_{\lambda}^{1/2}%
\right\vert ^{2}dV_{\mathbb{H}}%
\end{align*}
for all $\lambda\geq\beta$, where $\beta$ is defined in Theorem \ref{3rdineq}.
\end{proof}

\begin{proof}
[Proof of Theorem \ref{finalstability}]Combining Corollary \ref{Cor 3.3} with
Theorem \ref{Cor 3.2} when $\beta$ is chosen as in Theorem \ref{3rdineq}, we
deduce the desired result.
\end{proof}

\section{Log-Sobolev inequality with Gaussian measure-Proof of Theorem
\ref{ThmlogS}}

\label{sct6}In this section, we will prove the log-Sobolev inequality. Let
$\beta>0$,\ $G=\int_{\mathbb{H}^{N}}e^{-\beta\rho^{2}(x)}dV_{\mathbb{H}}$ be
the normalization constant and $d\mu_{\mathbb{H}}:=\frac{e^{-\beta\rho^{2}%
(x)}}{G}dV_{\mathbb{H}}$ be a probability measure. We will modify the method
in \cite{HZ10} and first establish the following $U$-bound estimates

\begin{lemma}
\label{l5.1}Let $N\geq2$. There exist positive universal constants $C_{1},$
$D_{1}$ such that the following bound holds
\[
\int_{\mathbb{H}^{N}}|f|\rho\left(  x\right)  d\mu_{\mathbb{H}}\leq C_{1}%
\int_{\mathbb{H}^{N}}|\nabla_{\mathbb{H}}f|d\mu_{\mathbb{H}}+D_{1}%
\int_{\mathbb{H}^{N}}|f|d\mu_{\mathbb{H}}%
\]
for all $f$ such that $\int_{\mathbb{H}^{N}}|\nabla_{\mathbb{H}}%
f|d\mu_{\mathbb{H}}+\int_{\mathbb{H}^{N}}|f|d\mu_{\mathbb{H}}<\infty$.
\end{lemma}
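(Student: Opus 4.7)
The proof plan is to apply an integration by parts (divergence theorem) argument with a carefully chosen bounded vector field $X$ on $\mathbb{H}^N$. The key identity is the following: with the potential $V = \beta\rho^2$ (so $d\mu_{\mathbb{H}} = G^{-1} e^{-V} dV_{\mathbb{H}}$), integration by parts against $|f|$ (after standard smooth approximation $|f|\approx\sqrt{f^2+\varepsilon^2}$, letting $\varepsilon\to 0^+$) gives
\begin{equation*}
\int_{\mathbb{H}^N} |f|\bigl(X\cdot\nabla_{\mathbb{H}} V - \operatorname{div} X\bigr)\, d\mu_{\mathbb{H}}
\;=\; \int_{\mathbb{H}^N} X \cdot \nabla_{\mathbb{H}} |f|\, d\mu_{\mathbb{H}}
\;\le\; \int_{\mathbb{H}^N} |X|\, |\nabla_{\mathbb{H}} f|\, d\mu_{\mathbb{H}}.
\end{equation*}
So the strategy is to choose $X$ bounded in magnitude (so the right-hand side is controlled by $\int |\nabla_{\mathbb{H}} f|\, d\mu_{\mathbb{H}}$) while the quantity $X\cdot\nabla_{\mathbb{H}} V - \operatorname{div} X$ grows linearly in $\rho$ up to an additive constant.

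The natural first attempt $X=\nabla_{\mathbb{H}}\rho$ fails near the origin because $\operatorname{div}(\nabla_{\mathbb{H}}\rho) = \Delta_{\mathbb{H}}\rho = (N-1)\coth\rho$ blows up as $\rho\to 0^+$. The cure is to damp $X$ near the origin: I would take
\begin{equation*}
X \;:=\; \tanh(\rho(x))\,\nabla_{\mathbb{H}}\rho(x),
\end{equation*}
so $|X|=\tanh\rho\le 1$ uniformly. Using $|\nabla_{\mathbb{H}}\rho|^2=1$ and $\Delta_{\mathbb{H}}\rho=(N-1)\coth\rho$ (which follows immediately from the earlier identity \eqref{eq L2.2}), a direct computation yields
\begin{equation*}
\operatorname{div} X \;=\; \operatorname{sech}^2\rho \cdot |\nabla_{\mathbb{H}}\rho|^2 \;+\; \tanh\rho\cdot(N-1)\coth\rho \;=\; \operatorname{sech}^2\rho + (N-1),
\end{equation*}
the crucial point being that $\tanh\rho\cdot\coth\rho\equiv 1$ kills the singularity. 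At the same time $\nabla_{\mathbb{H}} V = 2\beta\rho\,\nabla_{\mathbb{H}}\rho$ gives $X\cdot\nabla_{\mathbb{H}} V = 2\beta\rho\tanh\rho$, so
\begin{equation*}
X\cdot\nabla_{\mathbb{H}} V - \operatorname{div} X \;=\; 2\beta\rho\tanh\rho \;-\; \operatorname{sech}^2\rho \;-\; (N-1).
\end{equation*}

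The remaining step is the elementary one-variable lemma: there exist constants $C_0>0$ and $D_0>0$, depending only on $\beta$ and $N$, such that
\begin{equation*}
2\beta\rho\tanh\rho - \operatorname{sech}^2\rho - (N-1) \;\ge\; C_0\,\rho - D_0 \qquad \text{for all } \rho\ge 0.
\end{equation*}
This is immediate because for $\rho\ge 1$ we have $\tanh\rho\ge\tanh 1$ so the left-hand side dominates $2\beta\tanh(1)\rho - N$, while on $[0,1]$ the left-hand side is bounded below by a universal constant; one just chooses $D_0$ large enough to absorb the discrepancy. Substituting this pointwise bound into the integrated identity above and rearranging yields
\begin{equation*}
C_0 \int_{\mathbb{H}^N} |f|\,\rho\, d\mu_{\mathbb{H}} \;\le\; \int_{\mathbb{H}^N} |\nabla_{\mathbb{H}} f|\, d\mu_{\mathbb{H}} \;+\; D_0 \int_{\mathbb{H}^N} |f|\, d\mu_{\mathbb{H}},
\end{equation*}
which is the desired $U$-bound with $C_1 = 1/C_0$ and $D_1 = D_0/C_0$.

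I expect the main obstacle to be purely the choice of the vector field $X$: one needs a damping factor that simultaneously (i) renders $\operatorname{div} X$ regular at $\rho=0$, (ii) keeps $|X|$ bounded so the gradient term on the right is controlled, and (iii) preserves linear-in-$\rho$ growth of $X\cdot\nabla_{\mathbb{H}} V$ at infinity. The algebraic miracle $\tanh\rho\cdot\coth\rho=1$ is what makes $X=\tanh(\rho)\nabla_{\mathbb{H}}\rho$ satisfy all three requirements at once; the rest is routine. A minor technical caveat is the mild non-smoothness of $\rho$ at the origin, which is dealt with in the standard way by removing a small geodesic ball $B_\varepsilon(0)$ and checking that the boundary term vanishes as $\varepsilon\to 0$ (which it does because $|X|=O(\rho)$ near the origin while the boundary has measure $O(\rho^{N-1})$).
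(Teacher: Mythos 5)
Your proof is correct, and its engine is the same as the paper's: integrate the radial derivative by parts against the Gaussian weight $e^{-\beta\rho^2}$, use $|\nabla_{\mathbb{H}}\rho|=1$ and $\Delta_{\mathbb{H}}\rho=(N-1)\coth\rho$, and harvest the linear-in-$\rho$ term $2\beta\rho$ produced by differentiating the weight. Where you genuinely diverge from the paper is in how the singularity of $\coth\rho$ at the origin is neutralized. The paper keeps the vector field $\nabla_{\mathbb{H}}\rho$ untouched, first proves the bound for $f$ vanishing on the unit ball (where $\Delta_{\mathbb{H}}\rho\le(N-1)\coth 1$), and then handles general $f$ by the cutoff decomposition $f=\phi f+(1-\phi)f$, paying an extra $\int|f|\,d\mu_{\mathbb{H}}$ from the region $\{\rho\le 2\}$ and a commutator term $|f|\,|\nabla_{\mathbb{H}}\phi|$. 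You instead damp the vector field itself, taking $X=\tanh(\rho)\nabla_{\mathbb{H}}\rho$, so that $\operatorname{div}X=\operatorname{sech}^2\rho+(N-1)$ is globally bounded and a single integration by parts suffices; the elementary pointwise bound $2\beta\rho\tanh\rho-\operatorname{sech}^2\rho-(N-1)\ge C_0\rho-D_0$ then closes the argument. Your route is arguably cleaner (one identity, no case split, and $X$ in fact extends smoothly across the origin since $\tanh\rho/|x|$ is a smooth function of $|x|^2$ there), at the cost of having to verify the one-variable inequality; the paper's route is the more standard Hebisch--Zegarli\'{n}ski template and generalizes mechanically to other weights where no clean damping factor is available. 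Both yield constants of the same quality, depending only on $N$ and $\beta$.
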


\begin{proof}
Without loss of generality, we assume that $f\geq0$. First, we consider a
smooth function $f\geq0$ such that $f$ vanishes on the unit ball
$\{x\in\mathbb{H}^{N}:\rho(x)<1\}$. By using the Leibniz rule, we have
\begin{equation}
\int_{\mathbb{H}^{N}}(\nabla_{\mathbb{H}}f\cdot\nabla_{\mathbb{H}}%
\rho)e^{-\beta\rho^{2}(x)}dV_{\mathbb{H}}=\int_{\mathbb{H}^{N}}\nabla
_{\mathbb{H}}\rho\cdot\nabla_{\mathbb{H}}(fe^{-\beta\rho^{2}(x)}%
)dV_{\mathbb{H}}+2\beta\int_{\mathbb{H}^{N}}f\rho(x)|\nabla_{\mathbb{H}}%
\rho|^{2}e^{-\beta\rho^{2}(x)}dV_{\mathbb{H}}. \label{eq 0.1}%
\end{equation}
Since $\left\vert \nabla_{\mathbb{H}}\rho\right\vert =1$, we obtain%
\[
\int_{\mathbb{H}^{N}}(\nabla_{\mathbb{H}}f\cdot\nabla_{\mathbb{H}}%
\rho)e^{-\beta\rho^{2}(x)}dV_{\mathbb{H}}\leq\int_{\mathbb{H}^{N}}%
|\nabla_{\mathbb{H}}f||\nabla_{\mathbb{H}}\rho|e^{-\beta\rho^{2}%
(x)}dV_{\mathbb{H}}=\int_{\mathbb{H}^{N}}|\nabla_{\mathbb{H}}f|e^{-\beta
\rho^{2}(x)}dV_{\mathbb{H}}.
\]
Using the fact that
\[
\Delta_{\mathbb{H}}\rho=\frac{\partial^{2}}{\partial\rho^{2}}(\rho
)+(N-1)\coth\rho\frac{\partial}{\partial\rho}(\rho)=(N-1)\coth\rho
\leq(N-1)\coth(1)
\]
outside the unit ball, and that $f$ vanishes on the unit ball, we deduce that
\begin{align*}
\int_{\mathbb{H}^{N}}\nabla_{\mathbb{H}}\rho\cdot\nabla_{\mathbb{H}%
}(fe^{-\beta\rho^{2}(x)})dV_{\mathbb{H}}  &  =-\int_{\mathbb{H}^{N}}%
\Delta_{\mathbb{H}}\rho\cdot(fe^{-\beta\rho^{2}(x)})dV_{\mathbb{H}}\\
&  \geq-(N-1)\coth(1)\int_{\mathbb{H}^{N}}fe^{-\beta\rho^{2}(x)}%
dV_{\mathbb{H}}.
\end{align*}
Therefore, from \eqref{eq 0.1}, we get
\[
-(N-1)\coth(1)\int_{\mathbb{H}^{N}}fe^{-\beta\rho^{2}(x)}dV_{\mathbb{H}%
}+2\beta\int_{\mathbb{H}^{N}}f\rho(x)e^{-\beta\rho^{2}(x)}dV_{\mathbb{H}}%
\leq\int_{\mathbb{H}^{N}}|\nabla_{\mathbb{H}}f|e^{-\beta\rho^{2}%
(x)}dV_{\mathbb{H}},
\]
which implies that
\[
\int_{\mathbb{H}^{N}}f\rho(x)e^{-\beta\rho^{2}(x)}dV_{\mathbb{H}}\leq\dfrac
{1}{2\beta}\int_{\mathbb{H}^{N}}|\nabla_{\mathbb{H}}f|e^{-\beta\rho^{2}%
(x)}dV_{\mathbb{H}}+\dfrac{(N-1)\coth(1)}{2\beta}\int_{\mathbb{H}^{N}%
}fe^{-\beta\rho^{2}(x)}dV_{\mathbb{H}},
\]
as desired.\newline Next, we will work on $f$ not vanishing on the unit ball.
To deal with this case, we express
\[
f=\phi f+(1-\phi)f:=f_{0}+f_{1},
\]
where $\phi(x)=\min(1,\max(2-\rho(x),0)).$ Therefore, $|f_{1}|=|(1-\phi
)f|\leq|f|$ and
\begin{align*}
\int_{\mathbb{H}^{N}}|f|\rho(x)d\mu_{\mathbb{H}}  &  =\int_{\{x\in
\mathbb{H}^{N}:\rho(x)\leq2\}}|f|\rho(x)d\mu_{\mathbb{H}}+\int_{\{x\in
\mathbb{H}^{N}:\rho(x)>2\}}|f|\rho(x)d\mu_{\mathbb{H}}\\
&  \leq2\int_{\{x\in\mathbb{H}^{N}:\rho(x)\leq2\}}|f|d\mu_{\mathbb{H}}%
+\int_{\{x\in\mathbb{H}^{N}:\rho(x)>2\}}|f_{1}|\rho(x)d\mu_{\mathbb{H}}\\
&  \leq2\int_{\mathbb{H}^{N}}|f|d\mu_{\mathbb{H}}+\int_{\mathbb{H}^{N}}%
|f_{1}|\rho(x)d\mu_{\mathbb{H}}.
\end{align*}
Since $f_{1}$ vanishes on the unit ball, applying the previous case, we
acquire
\[
\int_{\mathbb{H}^{N}}|f|\rho(x)d\mu_{\mathbb{H}}\leq2\int_{\mathbb{H}^{N}%
}|f|d\mu_{\mathbb{H}}+C\int_{\mathbb{H}^{N}}|\nabla_{\mathbb{H}}f_{1}%
|d\mu_{\mathbb{H}}+D\int_{\mathbb{H}^{N}}|f_{1}|d\mu_{\mathbb{H}}.
\]
Moreover, since
\begin{align*}
|\nabla_{\mathbb{H}}f_{1}|=|\nabla_{\mathbb{H}}(f-\phi f)|=|(1-\phi
)\nabla_{\mathbb{H}}f-f\nabla_{\mathbb{H}}\phi|  &  \leq|\nabla_{\mathbb{H}%
}f|+|\nabla_{\mathbb{H}}\phi||f|\\
&  \leq|\nabla_{\mathbb{H}}f|+|\nabla_{\mathbb{H}}\rho||f|\\
&  =|\nabla_{\mathbb{H}}f|+|f|,
\end{align*}
we get
\[
\int_{\mathbb{H}^{N}}|f|\rho(x)d\mu_{\mathbb{H}}\leq(2+C+D)\int_{\mathbb{H}%
^{N}}|f|d\mu_{\mathbb{H}}+C\int_{\mathbb{H}^{N}}|\nabla_{\mathbb{H}}%
f|d\mu_{\mathbb{H}},
\]
which holds for any function $f$.
\end{proof}

\begin{lemma}
\label{l5.2}Let $N\geq2$. There exist positive universal constants $C_{2},$
$D_{2}$ such that the following bound holds%
\[
\int_{\mathbb{H}^{N}}\rho^{2}(x)\left\vert u\right\vert ^{2}d\mu_{\mathbb{H}%
}\leq C_{2}\int_{\mathbb{H}^{N}}\left\vert \nabla_{\mathbb{H}}u\right\vert
^{2}d\mu_{\mathbb{H}}+D_{2}\int_{\mathbb{H}^{N}}\left\vert u\right\vert
^{2}d\mu_{\mathbb{H}}%
\]
for all $u$ such that $\int_{\mathbb{H}^{N}}\left\vert \nabla_{\mathbb{H}%
}u\right\vert ^{2}d\mu_{\mathbb{H}}+\int_{\mathbb{H}^{N}}\left\vert
u\right\vert ^{2}d\mu_{\mathbb{H}}<\infty$.
\end{lemma}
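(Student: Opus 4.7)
The plan is to apply Lemma \ref{l5.1} directly to the choice $f = \rho(x)\, u^2$. With this choice, the left-hand side of Lemma \ref{l5.1} becomes exactly $\int_{\mathbb{H}^N}\rho^2(x)u^2\, d\mu_{\mathbb{H}}$, which is the quantity we wish to estimate. The task then reduces to controlling $\int_{\mathbb{H}^N}|\nabla_{\mathbb{H}}(\rho u^2)|\, d\mu_{\mathbb{H}}$ and $\int_{\mathbb{H}^N}\rho u^2\, d\mu_{\mathbb{H}}$ by the desired right-hand side.

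First I would expand the gradient using the Leibniz rule together with $|\nabla_{\mathbb{H}}\rho|=1$, obtaining the pointwise bound
\[
|\nabla_{\mathbb{H}}(\rho u^2)| \;\le\; u^2 + 2\rho(x)|u|\,|\nabla_{\mathbb{H}}u|.
\]
Next, two applications of Young's inequality $2ab \le \delta a^2 + \delta^{-1} b^2$ give, for any $\delta>0$,
\[
2\rho(x)|u|\,|\nabla_{\mathbb{H}}u| \;\le\; \delta\,\rho^2(x)u^2 + \tfrac{1}{\delta}|\nabla_{\mathbb{H}}u|^2,
\qquad
\rho(x)u^2 \;\le\; \delta\,\rho^2(x)u^2 + \tfrac{1}{4\delta}u^2.
\]

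Substituting these into Lemma \ref{l5.1} applied to $f=\rho u^2$ yields
\[
\bigl(1-(C_1+D_1)\delta\bigr)\int_{\mathbb{H}^N}\rho^2(x)u^2\, d\mu_{\mathbb{H}}
\;\le\; \frac{C_1}{\delta}\int_{\mathbb{H}^N}|\nabla_{\mathbb{H}}u|^2\, d\mu_{\mathbb{H}}
+\Bigl(C_1+\frac{D_1}{4\delta}\Bigr)\int_{\mathbb{H}^N}u^2\, d\mu_{\mathbb{H}}.
\]
Choosing $\delta=\tfrac{1}{2(C_1+D_1)}$ makes the coefficient on the left equal to $1/2$, and absorbing this factor into the right-hand side produces explicit constants $C_2, D_2$ with the required bound.

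The only delicate point is justifying that $f=\rho\,u^2$ actually lies in the integrability class to which Lemma \ref{l5.1} applies: the lemma is stated under the hypothesis $\int|\nabla_{\mathbb{H}} f|\,d\mu_{\mathbb{H}}+\int|f|\,d\mu_{\mathbb{H}}<\infty$, whereas we only assume the $H^1$-type finiteness on $u$. The standard remedy is to first prove the inequality for $u\in C_0^\infty(\mathbb{H}^N)$ (where $f=\rho u^2$ manifestly has all required integrals finite) and then pass to the limit by a density argument; alternatively, one may replace $\rho$ by the truncation $\rho_R := \min(\rho,R)$, derive the inequality uniformly in $R$, and let $R\to\infty$ by monotone convergence, since $|\nabla_{\mathbb{H}}\rho_R|\le 1$ almost everywhere and the above Young-inequality scheme goes through unchanged.
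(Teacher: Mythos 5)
Your proposal is correct and follows essentially the same route as the paper: the paper also applies Lemma \ref{l5.1} to $f=|u|^{2}$ times a distance-like weight (it uses $\rho_{1}=\max(1,\rho)$ rather than $\rho$ itself) and then absorbs the $\int\rho^{2}|u|^{2}d\mu_{\mathbb{H}}$ term via Cauchy--Schwarz/Young with large parameters, exactly as your $\delta$-scheme does. Your truncation $\rho_{R}=\min(\rho,R)$ to justify the absorption when $\int\rho^{2}|u|^{2}d\mu_{\mathbb{H}}$ is not known a priori to be finite is a point the paper leaves implicit, and is a welcome addition.
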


\begin{proof}
Let us define $\rho_{1}(x)=\max(1,\rho(x))$. Then, we can rewrite $\rho
_{1}(x)=\dfrac{1+\rho(x)+|\rho(x)-1|}{2}$. By Lemma \ref{l5.1}, we can find
two positive universal constants $C$, $D>0$ such that
\[
\int_{\mathbb{H}^{N}}|f|\rho_{1}(x)d\mu_{\mathbb{H}}\leq C\int_{\mathbb{H}%
^{N}}|\nabla_{\mathbb{H}}f|d\mu_{\mathbb{H}}+D\int_{\mathbb{H}^{N}}%
|f|d\mu_{\mathbb{H}}.
\]
Now, let $f=|u|^{2}\rho_{1}$. We have
\begin{align*}
\int_{\mathbb{H}^{N}}|u|^{2}\rho_{1}^{2}(x)d\mu_{\mathbb{H}}  &
=\int_{\mathbb{H}^{N}}f\rho_{1}(x)d\mu_{\mathbb{H}}\\
&  \leq C\int_{\mathbb{H}^{N}}|\nabla_{\mathbb{H}}f|d\mu_{\mathbb{H}}%
+D\int_{\mathbb{H}^{N}}fd\mu_{\mathbb{H}}.
\end{align*}
Using the H\"older inequality and the Cauchy--Schwarz inequality, we obtain
\begin{align*}
\int_{\mathbb{H}^{N}}fd\mu_{\mathbb{H}}=\int_{\mathbb{H}^{N}}|u|^{2}\rho
_{1}d\mu_{\mathbb{H}}  &  =\int_{\mathbb{H}^{N}}|u||u|\rho_{1}d\mu
_{\mathbb{H}}\\
&  \leq\left(  \int_{\mathbb{H}^{N}}|u|^{2}d\mu_{\mathbb{H}}\right)
^{1/2}\left(  \int_{\mathbb{H}^{N}}|u|^{2}\rho_{1}^{2}d\mu_{\mathbb{H}%
}\right)  ^{1/2}\\
&  \leq\dfrac{\alpha^{2}}{2}\int_{\mathbb{H}^{N}}|u|^{2}d\mu_{\mathbb{H}%
}+\dfrac{1}{2\alpha^{2}}\int_{\mathbb{H}^{N}}|u|^{2}\rho_{1}^{2}%
d\mu_{\mathbb{H}}.
\end{align*}
Also, since $\left\vert \nabla\rho_{1}\right\vert \leq1$, we get
\begin{align*}
\int_{\mathbb{H}^{N}}|\nabla_{\mathbb{H}}f|d\mu_{\mathbb{H}}  &  \leq
2\int_{\mathbb{H}^{N}}|u||\nabla_{\mathbb{H}}u|\rho_{1}d\mu_{\mathbb{H}}%
+\int_{\mathbb{H}^{N}}|u|^{2}|\nabla_{\mathbb{H}}\rho_{1}|d\mu_{\mathbb{H}}\\
&  \leq2\int_{\mathbb{H}^{N}}|u||\nabla_{\mathbb{H}}u|\rho_{1}d\mu
_{\mathbb{H}}+\int_{\mathbb{H}^{N}}|u|^{2}d\mu_{\mathbb{H}}%
\end{align*}
and
\begin{align*}
2\int_{\mathbb{H}^{N}}|u||\nabla_{\mathbb{H}}u|\rho_{1}d\mu_{\mathbb{H}}  &
\leq2\left(  \int_{\mathbb{H}^{N}}|\nabla_{\mathbb{H}}u|^{2}d\mu_{\mathbb{H}%
}\right)  ^{1/2}\left(  \int_{\mathbb{H}^{N}}|u|^{2}\rho_{1}^{2}%
d\mu_{\mathbb{H}}\right)  ^{1/2}\\
&  \leq\gamma^{2}\int_{\mathbb{H}^{N}}|\nabla_{\mathbb{H}}u|^{2}%
d\mu_{\mathbb{H}}+\dfrac{1}{\gamma^{2}}\int_{\mathbb{H}^{N}}|u|^{2}\rho
_{1}^{2}d\mu_{\mathbb{H}}.
\end{align*}
Combining all inequalities above, we deduce that%
\begin{align*}
&  \int_{\mathbb{H}^{N}}\rho^{2}(x)\left\vert u\right\vert ^{2}d\mu
_{\mathbb{H}}\\
&  \leq\int_{\mathbb{H}^{N}}|u|^{2}\rho_{1}^{2}(x)d\mu_{\mathbb{H}}\\
&  \leq C\left(  \gamma^{2}\int_{\mathbb{H}^{N}}|\nabla_{\mathbb{H}}u|^{2}%
d\mu_{\mathbb{H}}+\dfrac{1}{\gamma^{2}}\int_{\mathbb{H}^{N}}|u|^{2}\rho
_{1}^{2}d\mu_{\mathbb{H}}+\int_{\mathbb{H}^{N}}|u|^{2}d\mu_{\mathbb{H}}\right)
\\
&  +D\left(  \dfrac{\alpha^{2}}{2}\int_{\mathbb{H}^{N}}|u|^{2}d\mu
_{\mathbb{H}}+\dfrac{1}{2\alpha^{2}}\int_{\mathbb{H}^{N}}|u|^{2}\rho_{1}%
^{2}d\mu_{\mathbb{H}}\right) \\
&  =C\gamma^{2}\int_{\mathbb{H}^{N}}|\nabla_{\mathbb{H}}u|^{2}d\mu
_{\mathbb{H}}+\left(  \dfrac{C}{\gamma^{2}}+\dfrac{D}{2\alpha^{2}}\right)
\int_{\mathbb{H}^{N}}|u|^{2}\rho_{1}^{2}d\mu_{\mathbb{H}}+\left(
C+\dfrac{D\alpha^{2}}{2}\right) \int_{\mathbb{H}^{N}}|u|^{2}d\mu_{\mathbb{H}}.
\end{align*}
Therefore, by choosing $\alpha,\gamma$ large enough, we can find two positive
universal constants $K,$ $L$ such that
\[
\int_{\mathbb{H}^{N}}\rho^{2}(x)\left\vert u\right\vert ^{2}d\mu_{\mathbb{H}%
}\leq C_{2}\int_{\mathbb{H}^{N}}\left\vert \nabla_{\mathbb{H}}u\right\vert
^{2}d\mu_{\mathbb{H}}+D_{2}\int_{\mathbb{H}^{N}}\left\vert u\right\vert
^{2}d\mu_{\mathbb{H}}.
\]

\end{proof}

We are now ready to provide a proof for Theorem \ref{ThmlogS}.

\begin{proof}
[Proof of Theorem \ref{ThmlogS}]Assume that $\int_{\mathbb{H}^{N}}\left\vert
u\right\vert ^{2}d\mu_{\mathbb{H}}=A^{2}>0$. Let $v=\frac{ue^{-\frac{\beta}%
{2}\rho^{2}(x)}}{A\sqrt{G}}$. Then $\int_{\mathbb{H}^{N}}\left\vert
v\right\vert ^{2}dV_{\mathbb{H}}=1$.

If $N=2$, then by Jensen's inequality and the Sobolev inequality $\left(
\int_{\mathbb{H}^{2}}\left\vert v\right\vert ^{4}dV_{\mathbb{H}}\right)
^{\frac{1}{2}}\leq S_{2}\int_{\mathbb{H}^{2}}\left\vert \nabla_{\mathbb{H}%
}v\right\vert ^{2}dV_{\mathbb{H}}$, we have
\begin{align*}
\int_{\mathbb{H}^{2}}\left\vert v\right\vert ^{2}\log\left\vert v\right\vert
^{2}dV_{\mathbb{H}}  &  \leq\log\left(  \int_{\mathbb{H}^{N}}\left\vert
v\right\vert ^{4}dV_{\mathbb{H}}\right) \\
&  \leq2\log\left(  S_{2}\int_{\mathbb{H}^{2}}\left\vert \nabla_{\mathbb{H}%
}v\right\vert ^{2}dV_{\mathbb{H}}\right) \\
&  \leq2\log S_{2}+2\int_{\mathbb{H}^{2}}\left\vert \nabla_{\mathbb{H}%
}v\right\vert ^{2}dV_{\mathbb{H}}.
\end{align*}

If $N\geq3$, by using Jensen's inequality, we obtain that%
\begin{align*}
\int_{\mathbb{H}^{N}}\left\vert v\right\vert ^{2}\log\left\vert v\right\vert
^{2}dV_{\mathbb{H}}  &  =\frac{N-2}{2}\int_{\mathbb{H}^{N}}\log\left(
\left\vert v\right\vert ^{\frac{4}{N-2}}\right)  \left\vert v\right\vert
^{2}dV_{\mathbb{H}}\\
&  \leq\frac{N-2}{2}\log\left(  \int_{\mathbb{H}^{N}}\left\vert v\right\vert
^{\frac{4}{N-2}}\left\vert v\right\vert ^{2}dV_{\mathbb{H}}\right) \\
&  =\frac{N}{2}\log\left(  \int_{\mathbb{H}^{N}}\left\vert v\right\vert
^{\frac{2N}{N-2}}dV_{\mathbb{H}}\right)  ^{\frac{N-2}{N}}.
\end{align*}
By applying the Poincar\'{e}-Sobolev inequalities on the hyperbolic space
\[
S_{N}\int_{\mathbb{H}^{N}}\left\vert \nabla_{\mathbb{H}}v\right\vert
^{2}dV_{\mathbb{H}}\geq\left(  \int_{\mathbb{H}^{N}}\left\vert v\right\vert
^{\frac{2N}{N-2}}dV_{\mathbb{H}}\right)  ^{\frac{N-2}{N}}%
\]
we get%
\[
\int_{\mathbb{H}^{N}}\left\vert v\right\vert ^{2}\log\left\vert v\right\vert
^{2}dV_{\mathbb{H}}\leq\frac{N}{2}\log\left(  S_{N}\int_{\mathbb{H}^{N}%
}\left\vert \nabla_{\mathbb{H}}v\right\vert ^{2}dV_{\mathbb{H}}\right)
\leq\frac{N}{2}\log S_{N}+\frac{N}{2}\int_{\mathbb{H}^{N}}\left\vert
\nabla_{\mathbb{H}}v\right\vert ^{2}dV_{\mathbb{H}}%
\]
In particular, for $N\geq2$, we have
\[
\int_{\mathbb{H}^{N}}\left\vert v\right\vert ^{2}\log\left\vert v\right\vert
^{2}dV_{\mathbb{H}}\leq C_{1}+C_{2}\int_{\mathbb{H}^{N}}\left\vert
\nabla_{\mathbb{H}}v\right\vert ^{2}dV_{\mathbb{H}}.
\]
Therefore%
\begin{align*}
&  \int_{\mathbb{H}^{N}}\left\vert u\right\vert ^{2}\log\left(  \frac
{\left\vert u\right\vert ^{2}}{\int_{\mathbb{H}^{N}}\left\vert u\right\vert
^{2}d\mu_{\mathbb{H}}}\right)  d\mu_{\mathbb{H}}\\
&  =\int_{\mathbb{H}^{N}}\left\vert u\right\vert ^{2}\log\left(
\frac{\left\vert u\right\vert ^{2}e^{-\beta\rho^{2}(x)}}{A^{2}G}\frac
{G}{e^{-\beta\rho^{2}(x)}}\right)  d\mu_{\mathbb{H}}\\
&  =\int_{\mathbb{H}^{N}}\left\vert u\right\vert ^{2}\log\left(
\frac{\left\vert u\right\vert ^{2}e^{-\beta\rho^{2}(x)}}{A^{2}G}\right)
d\mu_{\mathbb{H}}+\beta\int_{\mathbb{H}^{N}}\rho^{2}(x)\left\vert u\right\vert
^{2}d\mu_{\mathbb{H}}+\log G\int_{\mathbb{H}^{N}}\left\vert u\right\vert
^{2}d\mu_{\mathbb{H}}\\
&  =A^{2}\int_{\mathbb{H}^{N}}\left\vert v\right\vert ^{2}\log\left\vert
v\right\vert ^{2}dV_{\mathbb{H}}+\beta\int_{\mathbb{H}^{N}}\rho^{2}%
(x)\left\vert u\right\vert ^{2}d\mu_{\mathbb{H}}+\log G\int_{\mathbb{H}^{N}%
}\left\vert u\right\vert ^{2}d\mu_{\mathbb{H}}\\
&  \leq C_{1}A^{2}+C_{2}A^{2}\int_{\mathbb{H}^{N}}\left\vert \nabla
_{\mathbb{H}}v\right\vert ^{2}dV_{\mathbb{H}}+\beta\int_{\mathbb{H}^{N}}%
\rho^{2}(x)\left\vert u\right\vert ^{2}d\mu_{\mathbb{H}}+\log G\int
_{\mathbb{H}^{N}}\left\vert u\right\vert ^{2}d\mu_{\mathbb{H}}\\
&  =C_{1}\int_{\mathbb{H}^{N}}\left\vert u\right\vert ^{2}d\mu_{\mathbb{H}%
}+C_{2}\int_{\mathbb{H}^{N}}\left\vert \nabla_{\mathbb{H}}\left(
\frac{ue^{-\frac{\beta}{2}\rho^{2}(x)}}{\sqrt{G}}\right)  \right\vert
^{2}dV_{\mathbb{H}}+\beta\int_{\mathbb{H}^{N}}\rho^{2}(x)\left\vert
u\right\vert ^{2}d\mu_{\mathbb{H}}+\log G\int_{\mathbb{H}^{N}}\left\vert
u\right\vert ^{2}d\mu_{\mathbb{H}}.
\end{align*}
Note that%
\begin{align*}
\int_{\mathbb{H}^{N}}\left\vert \nabla_{\mathbb{H}}\left(  \frac
{ue^{-\frac{\beta}{2}\rho^{2}(x)}}{\sqrt{G}}\right)  \right\vert
^{2}dV_{\mathbb{H}}  &  =\int_{\mathbb{H}^{N}}\left\vert \nabla_{\mathbb{H}%
}u-\beta u\rho(x)\nabla_{\mathbb{H}}\rho(x)\right\vert ^{2}d\mu_{\mathbb{H}}\\
&  \leq2\int_{\mathbb{H}^{N}}\left\vert \nabla_{\mathbb{H}}u\right\vert
^{2}d\mu_{\mathbb{H}}+2\beta^{2}\int_{\mathbb{H}^{N}}\rho^{2}(x)\left\vert
u\right\vert ^{2}d\mu_{\mathbb{H}}.
\end{align*}
Also, by the $U$-bound estimate (Lemma \ref{l5.2}), we have
\[
\int_{\mathbb{H}^{N}}\rho^{2}(x)\left\vert u\right\vert ^{2}d\mu_{\mathbb{H}%
}\leq C_{2}\int_{\mathbb{H}^{N}}\left\vert \nabla_{\mathbb{H}}u\right\vert
^{2}d\mu_{\mathbb{H}}+D_{2}\int_{\mathbb{H}^{N}}\left\vert u\right\vert
^{2}d\mu_{\mathbb{H}}.
\]
Therefore%
\[
\int_{\mathbb{H}^{N}}\left\vert u\right\vert ^{2}\log\left(  \frac{\left\vert
u\right\vert ^{2}}{\int_{\mathbb{H}^{N}}\left\vert u\right\vert ^{2}%
d\mu_{\mathbb{H}}}\right)  d\mu_{\mathbb{H}}\leq C\int_{\mathbb{H}^{N}%
}\left\vert \nabla_{\mathbb{H}}u\right\vert ^{2}d\mu_{\mathbb{H}}%
+D\int_{\mathbb{H}^{N}}\left\vert u\right\vert ^{2}d\mu_{\mathbb{H}}%
\]
for some positive universal constants $C$ and $D$. In particular, we have%
\begin{align*}
&  \int_{\mathbb{H}^{N}}\left\vert u-\int_{\mathbb{H}^{N}}ud\mu_{\mathbb{H}%
}\right\vert ^{2}\log\left(  \frac{\left\vert u-\int_{\mathbb{H}^{N}}%
ud\mu_{\mathbb{H}}\right\vert ^{2}}{\int_{\mathbb{H}^{N}}\left\vert
u-\int_{\mathbb{H}^{N}}ud\mu_{\mathbb{H}}\right\vert ^{2}d\mu_{\mathbb{H}}%
}\right)  d\mu_{\mathbb{H}}\\
&  \leq C\int_{\mathbb{H}^{N}}\left\vert \nabla_{\mathbb{H}}\left(
u-\int_{\mathbb{H}^{N}}ud\mu_{\mathbb{H}}\right)  \right\vert ^{2}%
d\mu_{\mathbb{H}}+D\int_{\mathbb{H}^{N}}\left\vert u-\int_{\mathbb{H}^{N}%
}ud\mu_{\mathbb{H}}\right\vert ^{2}d\mu_{\mathbb{H}}\\
&  =C\int_{\mathbb{H}^{N}}\left\vert \nabla_{\mathbb{H}}u\right\vert ^{2}%
d\mu_{\mathbb{H}}+D\int_{\mathbb{H}^{N}}\left\vert u-\int_{\mathbb{H}^{N}%
}ud\mu_{\mathbb{H}}\right\vert ^{2}d\mu_{\mathbb{H}}.
\end{align*}
Finally, using the following estimate (see \cite{BZ05})%
\begin{align*}
&  \int_{\mathbb{H}^{N}}\left\vert u\right\vert ^{2}\log\left(  \frac
{\left\vert u\right\vert ^{2}}{\int_{\mathbb{H}^{N}}\left\vert u\right\vert
^{2}d\mu_{\mathbb{H}}}\right)  d\mu_{\mathbb{H}}\\
&  \leq\int_{\mathbb{H}^{N}}\left\vert u-\int_{\mathbb{H}^{N}}ud\mu
_{\mathbb{H}}\right\vert ^{2}\log\left(  \frac{\left\vert u-\int
_{\mathbb{H}^{N}}ud\mu_{\mathbb{H}}\right\vert ^{2}}{\int_{\mathbb{H}^{N}%
}\left\vert u-\int_{\mathbb{H}^{N}}ud\mu_{\mathbb{H}}\right\vert ^{2}%
d\mu_{\mathbb{H}}}\right)  d\mu_{\mathbb{H}}+2\int_{\mathbb{H}^{N}}\left\vert
u-\int_{\mathbb{H}^{N}}ud\mu_{\mathbb{H}}\right\vert ^{2}d\mu_{\mathbb{H}},
\end{align*}
and the Poincar\'{e} inequality (Corollary \ref{weightedPoinIneq})
\[
K\left(  \beta\right)  \int_{\mathbb{H}^{N}}\left\vert u-\int_{\mathbb{H}^{N}%
}ud\mu_{\mathbb{H}}\right\vert ^{2}d\mu_{\mathbb{H}}\leq\int_{\mathbb{H}^{N}%
}\left\vert \nabla_{\mathbb{H}}u\right\vert ^{2}d\mu_{\mathbb{H}},
\]
we obtain the log-Sobolev inequality%
\[
\int_{\mathbb{H}^{N}}\left\vert u\right\vert ^{2}\log\left(  \frac{\left\vert
u\right\vert ^{2}}{\int_{\mathbb{H}^{N}}\left\vert u\right\vert ^{2}%
d\mu_{\mathbb{H}}}\right)  d\mu_{\mathbb{H}}\leq\left(  C+\frac{D+2}{K\left(
\beta\right)  }\right)  \int_{\mathbb{H}^{N}}\left\vert \nabla_{\mathbb{H}%
}u\right\vert ^{2}d\mu_{\mathbb{H}}.
\]

\end{proof}

\noindent\textbf{Acknowledgement:} A. Do and G. Lu were partially supported by
grants from the Simons Foundation and a Simons Fellowship. D. Ganguly was
partially supported by the SERB MATRICS (MTR/2023/000331). N. Lam was
partially supported by an NSERC Discovery Grant.


\end{document}